\def\AA{\mathbb{A}}
\def\PP{\mathbb{P}}
\def\RR{\mathbb{R}}
\def\ZZ{\mathbb{Z}}
\newcommand{\calC}{{\mathcal{C}}}
\newcommand{\lra}{\longrightarrow}
\newcommand{\To}{\longrightarrow}
\newcommand{\isoto}{\stackrel{\sim}{\To}}
\newcommand{\mat}[4]{\left( \begin{array}{cc} {#1} & {#2} \\ {#3} & {#4}
\end{array} \right)}
\newcommand{\smat}[4]{{\mbox{\scriptsize $\mat{{#1}}{{#2}}{{#3}}{{#4}}$}}}
\newlength{\ownl}
\newcommand{\Ann}{{\operatorname{Ann}\,}}
\newcommand{\Aut}{{\operatorname{Aut}\,}}
\newcommand{\diag}{{\operatorname{diag}}}
\newcommand{\End}{{\operatorname{End}\,}}
\newcommand{\Hom}{{\operatorname{Hom}\,}}
\newcommand{\Spec}{{\operatorname{Spec}\,}}
\newcommand{\Spf}{{\operatorname{Spf}\,}}
\newcommand{\tr}{{\operatorname{tr}\,}}
\newcommand{\val}{{\operatorname{val}\,}}
\newcommand{\GL}{\operatorname{GL}}
\newcommand{\SL}{\operatorname{SL}}
\newcommand{\et}{{\operatorname{\acute{e}t}}}
\newcommand{\loc}{{\operatorname{loc}}}
\newcommand{\st}{{\operatorname{st}}}
\newcommand{\tor}{{\operatorname{tor}}}
\newcommand{\A}{{\mathbb{A}}}
\newcommand{\C}{{\mathbb{C}}}
\newcommand{\F}{{\mathbb{F}}}
\newcommand{\G}{{\mathbb{G}}}
\newcommand{\M}{{\mathbb{M}}}
\newcommand{\Q}{{\mathbb{Q}}}
\newcommand{\R}{{\mathbb{R}}}
\newcommand{\Z}{{\mathbb{Z}}}
\newcommand{\CA}{{\mathcal{A}}}
\newcommand{\CF}{{\mathcal{F}}}
\newcommand{\CH}{{\mathcal{H}}}
\newcommand{\CI}{{\mathcal{I}}}
\newcommand{\CK}{{\mathcal{K}}}
\newcommand{\CL}{{\mathcal{L}}}
\newcommand{\CO}{{\mathcal{O}}}
\newcommand{\CU}{{\mathcal{U}}}
\newcommand{\gP}{{\mathfrak{P}}}
\newcommand{\gY}{{\mathfrak{Y}}}
\newcommand{\ga}{{\mathfrak{a}}}
\newcommand{\gc}{{\mathfrak{c}}}
\newcommand{\gd}{{\mathfrak{d}}}
\newcommand{\gm}{{\mathfrak{m}}}
\newcommand{\gn}{{\mathfrak{n}}}
\newcommand{\gp}{{\mathfrak{p}}}
\newcommand{\gq}{{\mathfrak{q}}}
\newcommand{\gor}{{\mathfrak{r}}}
\renewcommand{\M}{\mathsf{M}}
\DeclareMathOperator{\ord}{ord}
\newcommand{\Fpbar}{\overline{\F}_p}
\newcommand{\tord}{\operatorname{tord}}
\newcommand{\minor}{\operatorname{minor}}
\newcommand{\spl}{\operatorname{spl}}
\newcommand{\SPEC}{\operatorname{\bf{Spec}}}
\newcommand{\HOM}{\mathcal{H}om}
\newcommand{\naive}{\operatorname{naive}}
\newcommand{\Tr}{\operatorname{Tr}}
\newcommand{\Nm}{\operatorname{Nm}}
\def\smallmat#1#2#3#4{\bigl(\begin{smallmatrix}{#1}&{#2}\\{#3}&{#4}\end{smallmatrix}\bigr)}
\newcommand{\e}{{\mathbf{e}}}
\newcommand{\f}{{\mathbf{f}}}
\newcommand{\uhp}{{\mathfrak{H}}}
\newcommand{\bq}{{\mathbf{q}}}
\newcommand{\utH}{{\underline{\widetilde{H}}}}
\newcommand{\unH}{{\underline{H}}}
\renewcommand{\L}{{\mathscr{L}}}
\theoremstyle{plain}
\newtheorem{theorem}{Theorem}[subsection]
\newtheorem{proposition}[theorem]{Proposition}
\newtheorem{corollary}[theorem]{Corollary}
\newtheorem{lemma}[theorem]{Lemma}
\theoremstyle{definition}
\newtheorem{remark}[theorem]{Remark}
\numberwithin{equation}{section}
\begin{document}
\title{Compactifications of Iwahori-level Hilbert modular varieties}
\author{Fred Diamond}

\address{Department of Mathematics, King's College London, Strand, 
London, WC2R 2LS, United Kingdom}

\email{fred.diamond@kcl.ac.uk}

%\author{Shu Sasaki}
%\email{s.sasaki.03@cantab.net}
%\address{School of Mathematical Sciences, Queen Mary University %of London, E1 4NS, UK}
%\subjclass[2010]{}
\begin{abstract} We study minimal and toroidal compactifications of $p$-integral models of Hilbert modular varieties.  We review the theory in the setting of Iwahori level at primes over $p$, and extend it to certain finer level structures.  We also prove extensions to compactifications of recent results on Iwahori-level Kodaira--Spencer isomorphisms and cohomological vanishing for degeneracy maps.  Finally we apply the theory to study $q$-expansions of Hilbert modular forms, especially the effect of Hecke operators at primes over $p$ over general base rings.
\end{abstract}

%\date{\today}
\maketitle

\section{Introduction}
In this paper we study compactifications of Hilbert modular varieties, augmenting and extending the existing theory in several ways we expect to be useful.

We start by recalling, in \S\S\ref{sec:U}--\ref{sec:U0}, some of the main results on toroidal and minimal compactifications of integral (at $p$) models of Hilbert modular varieties with (at most) Iwahori level structure at primes $\gp$ over $p$.  The models are defined using moduli problems introduced in~\cite{pappas} and~\cite{PR}, and
the results on compactifications can be obtained by adapting the methods of Rapoport (\cite{rap}; see also \cite{chai, Dim, theta}) or by specializing much more general results of Lan (especially \cite{KWL:spl}).  

The new results are presented in \S\S\ref{sec:U1}--\ref{sec:Tp}.  Firstly in \S\ref{sec:U1} we consider level $U_1(\gp)$-structure,
using models based on the moduli problem in~\cite{pappas}.
These are finite and flat over those with Iwahori (i.e., $U_0(\gp$)) level structure, and we construct and study compactifications which preserve this property.  These were already introduced in \cite{DS1} under the assumption that $p$ is unramified in the totally real field $F$, where they are used in establishing existence and properties of Galois representations associated to mod $p$ Hilbert modular eigenforms.  The results here will similarly be used in the sequel~\cite{DS2}, where the assumption on ramification is removed.

In \S\ref{sec:KS} we extend the main results of~\cite{KS} to toroidal compactifications.  Recall that in \cite{KS} we construct a Kodaira--Spencer isomorphism describing the dualizing sheaf of the integral Iwahori-level model, and prove relative cohomological vanishing results for degeneracy maps, the latter generalizing results in \cite{DKS}, where $p$ is assumed unramified in $F$.
Our main motivation for this is to extend the ``saving trace'' to compactifications.
Recall that the saving trace is introduced in \cite{KS} in order to conceptualize and generalize the construction of Hecke operators at primes over $p$.  Its extension here to toroidal compactifications is used in \S\ref{sec:Tp} to prove the operators $T_\gp$ have the desired effect on $q$-expansions.  (See \cite{DW, DMPhD, DS1} for similar formulas, based on different, less general, constructions of $T_\gp$.)  This also implies the commutativity of the operators $T_\gp$ (for varying $\gp$ over $p$), tying up a loose end from~\cite{KS}.

Finally  in \S\ref{sec:corr} we take the opportunity to list a few minor corrections to \cite{theta}.

\subsection*{Notation}
We adopt much of the set-up and notation from \cite{theta} and \cite{KS}.
In particular, we fix a prime $p$ and a totally real field $F \neq \Q$.  We let $\CO_F$ denote the ring of integers of $F$ and $\gd = \gd_{F/\Q}$ the different.  We write $\CO_{F,(p)}$ for the localization of $F$ at the prime $p$ of $\Z$, and $\CO_{F,p}$ for the $p$-adic completion of $\CO_F$.  We let $\Sigma_p$ denote the set of prime ideals of $\CO_F$ over $p$, so that $\CO_{F,(p)}$ is also the localization of $\CO_F$ at the complement of $\bigcup_{\gp \in \Sigma_p} \gp$ and $\CO_{F,p} = \prod_{\gp\in\Sigma_p} \CO_{F,\gp}$.  For each $\gp \in \Sigma_p$, we let $|\CO_F/\gp| = p^{f_\gp}$ and $e_\gp = v_\gp(p)$, so $[F_\gp:\Q_p] = e_\gp f_\gp$.

We also fix a finite extension $K$ of $\Q_p$, and let $\CO$ denote its ring of integers and $k$ its residue field.  We assume that $K$ is sufficiently large to contain the image of all embeddings of $F$ in the algebraic closure of $K$, and we let $\Theta$ denote the set of such embeddings.  For each $\gp \in \Sigma_p$ we let $\Theta_{\gp}$ denote the set of embeddings $F_{\gp} \to K$, and we identify $\Theta$ with $\coprod_{\gp\in \Sigma_p} \Theta_\gp$ via the canonical bijection.  Similarly for each $\gp \in \Sigma_p$ we may write $\Theta_{\gp} = \coprod_{\tau} \Theta_\tau$, where $\tau$ runs over the set of $f_\gp$ embeddings $\CO_F/\gp \to k$, and for each such $\tau$, we arbitrarily choose an ordering of 
$\Theta_\tau = \{\widetilde{\tau}_1,\ldots,\widetilde{\tau}_{e_\gp}\}$ of the $e_\gp$ elements of $\Theta_{\gp}$ restricting to $\widetilde{\tau}:W(\CO_F/\gp) \to \CO$.

For $\vec{n} \in \Z^\Theta$, let $\chi_{\vec{n}}:F^\times \to K^\times$ denote the corresponding character (defined by
$\chi_{\vec{n}}(\alpha) = \prod_{\theta\in \Theta} \theta(\alpha)^{n_\theta}$), and for any $\CO$-algebra $R$, we let $\chi_{\vec{n},R}:\CO_{F,(p)}^\times \to R^\times$ denote the character obtained from $\chi$ by composition.

We let $\A_{F,\f} = \widehat{\CO}_F \otimes \Q$ denote the finite adeles of $F$, and $\A^{(p)}_{F,\f} = \widehat{\CO}^{(p)}_F \otimes \Q$ the prime-to-$p$ finite adeles of $F$ (where $\widehat{\cdot}^{\,(p)}$ denotes prime-to-$p$ completion).  We let
$U$ be a sufficiently small\footnote{in the sense of \cite[\S2.2]{theta}} open compact subgroup of $\GL_2(\A_{F,\f})$ containing $\GL_2(\CO_{F,p})$, so that $U = U^p\GL_2(\CO_{F,p})$ for a (sufficiently small) open compact subgroup $U^p$ of
$\GL_2(\A^{(p)}_{F,\f})$.

\section{Level prime to $p$} \label{sec:U}

In this section, we recall the construction and properties of toroidal and minimal compactifications of $p$-integral models of Hilbert modular varieties of level prime to $p$.  Our main focus will be on the ``splitting'' models constructed by Pappas and Rapoport as in \cite{PR}, but we first consider the``naive'' models of Deligne and Pappas~\cite{DP}.  Since the ordinary loci of these models coincide, we may view the compactifications of the former as obtained from the latter.

\subsection{The Deligne--Pappas model} \label{ss:DP}

We let $\widetilde{Y}_-$ denote the (infinite disjoint union of Deligne--Pappas PEL) fine 
moduli scheme(s) of level $U$ (defined as in \cite[\S2.1]{theta}, but without filtrations), and $Y_- := \CO_{F,(p),+}^\times\backslash\widetilde{Y}_-$ 
the resulting model for the Hilbert modular  variety of level~$U$.
More precisely, for a locally Noetherian $\CO$-scheme $S$, 
$\widetilde{Y}_-(S)$ is identified with 
the set of isomorphism classes of data
$(A,\iota,\lambda,\eta)$, where:
\begin{itemize}
\item $s:A \to S$ is an abelian scheme of relative dimension $d$;
\item $\iota: \CO_F\to \End_S(A)$ is an embedding such that $(s_*\Omega^1_{A/S})_\gp$
is, locally on $S$, free of rank $e_\gp$ over $W(\CO_F/\gp)\otimes_{\ZZ_p} \CO_S $
for each $\gp \in S_p$;
\item $\lambda$ is an $\CO_F$-linear quasi-polarization of $A$ such that for each connected component
$S_i$ of $S$, $\lambda$ induces an isomorphism $\gc_i\gd \otimes_{\CO_F}  A_{S_i} \to A_{S_i}^\vee$
for some fractional ideal $\gc_i$ of $F$ prime to $p$;
\item $\eta$ is a level $U^p$ structure on $A$,
\end{itemize}
and the $\CO^\times_{F,(p),+}$-action is defined by 
$\nu\cdot(A,\iota,\lambda,\eta,\CF^\bullet) = (A,\iota,\nu\lambda,\eta,\CF^\bullet)$.
We thus have a universal abelian scheme
over $\widetilde{Y}_-$, and the determinant of its cotangent bundle along the zero section defines an
ample line bundle $\widetilde{\omega}$, descending to one on $Y_-$ which we denote by $\omega$.

\subsection{Cusps of level $U$} \label{ss:Ucusps}
We define the set of {\em cusps} of level $U$ as in \cite[\S7.1]{theta} (called there the cusps of $Y_U$ and denoted $Y_U^\infty$) 
to be
$$ C = C_U :=  B(F)_+ \backslash \GL_2(\A_{F,\f}) /  U = B(\CO_{F,(p)})_+ \backslash \GL_2(\A_{F,\f}^{(p)}) /  U^p,$$
where $B \subset \GL_2$ is the subgroup of upper-triangular matrices, and the subscript $+$ denotes those with totally positive determinant.
Similarly we define the set of {\em polarized cusps} of level $U$ to be
$$ \widetilde{C} = \widetilde{C}_U  :=  B_1(\CO_{F,(p)}) \backslash \GL_2(\A_{F,\f}^{(p)}) /  U^p,$$
where $B_1 = B \cap \SL_2$.
Thus $\widetilde{C}$  (resp.~$C$) is in bijection with the set of isomorphism classes of data
$\utH = (H, I, \lambda, [\eta])$ (resp.~$\unH = (H,I,[\lambda],[\eta])$), where
\begin{itemize}
\item $0 \to I \to H \to J \to 0$ is an exact sequence of projective $\CO_F$-modules with $I$ and $J := H/I$ invertible;
\item  $\lambda$ (resp.~$[\lambda]$) is an ($\CO_{F,(p),+}^\times$-orbit of) 
$\CO_{F,(p)}$-linear isomorphism(s) 
$$(IJ)_{(p)} \isoto \CO_{F,(p)};$$
\item $[\eta]$ is a $U^p$-orbit of $\widehat{\CO}_F^{(p)}$-linear isomorphisms 
$(\widehat{\CO}_{F}^{(p)})^2  \isoto  \widehat{H}^{(p)}$.
\end{itemize}
We write $[\utH] = [H,I,\lambda,[\eta]]$ (resp.~$[\unH] = [H,I,[\lambda,[\eta]]$) for the associated isomorphism class.

\subsection{Toroidal compactification of $\widetilde{Y}_-$} \label{ss:tY-tor}
For simplicity, we assume $U = U(N)$ for some integer $N \ge 3$ (prime to $p$) in the consideration of
toroidal compactifications, which are obtained as in \cite{DS1} or \cite{theta}
by applying the method of \cite[\S5]{rap} (see also \cite{chai}, \cite{Dim} or \cite{KWL:PhD}) to
the connected components of $\widetilde{Y}_-$.  More precisely, choosing a polyhedral cone 
decomposition (as in \cite[\S4]{rap}) of $(J^{-1}I\otimes\RR)_{\ge 0}$ for each cusp in $\widetilde{C}$
yields an open immersion $\widetilde{Y}_- \hookrightarrow \widetilde{Y}_-^{\tor}$,
where $\widetilde{Y}_-^{\tor}$ is an infinite disjoint union of flat projective schemes\footnote{a priori algebraic spaces, but in fact projective schemes by (the method of) \cite[\S7.3]{KWL:PhD}, or alternatively by the relation described in \S\ref{ss:KWL} with the compactifications defined in \cite{KWL:IMRN}} over $\CO$.

Let us assume furthermore that $\CO$ contains the $N^{\mathrm{th}}$ roots of unity. 
The connected components of the complement of $\widetilde{Y}_-$ in $\widetilde{Y}_-^{\tor}$ are then in canonical bijection with $\widetilde{C}$, and the completion of $\widetilde{Y}_-^{\tor}$ along the component corresponding to $[H,I,\lambda,[\eta]]$ is described explicitly as the quotient by a free action of $(\CO_F^\times \cap U)^2$ on a certain locally Noetherian formal scheme $\widehat{S}$, namely the completion of the complement of $\Spec(\CO[N^{-1}M])$ in the torus embedding defined by the chosen $(\CO_F^\times \cap U)^2$-invariant
cone decomposition of $\Hom(M,\R)_{\ge 0}$, where $M = \gd^{-1}I^{-1}J$.

We remark that the cone decompositions can be chosen so that the action of $\CO_{F,(p),+}^\times$ on $\widetilde{Y}_-$ extends to one on $\widetilde{Y}_-^{\tor}$, yielding a toroidal compactification $Y_-^{\tor}$ of $Y_-$ as the quotient.
One can furthermore remove the restriction that $U = U(N)$, but we will make no use of this here, our ultimate focus (except in \S\ref{sec:KS}) being on the minimal compactifications for more general $U$.

\subsection{Relation with Lan's definition} \label{ss:KWL}
We note that our $\widetilde{Y}_-$ is an infinite disjoint union of schemes of the form
$\M_{\CH^p,\CO}^{\naive}$ in the notation of \cite{KWL:spl}.  More precisely for each $\epsilon \in (\A_{F,\f}^{(p)})^\times/(\det U^p)(\widehat{\Z}^{(p)})^\times$, let $\widetilde{Y}_-^{\epsilon}$ denote the open and closed subscheme
of $\widetilde{Y}_-$ over which the diagram
$$\xymatrix{ \gd^{-1}\widehat{\CO}_D^{(p)} \times \widehat{\CO}_D^{(p)}
\ar[r]^{\psi_F}  \ar[d]_{(\eta_i,\eta_i)}  &
\A_{F,\f}^{(p)}
\ar[d]^{\epsilon_i}  \\
T^{(p)}(A_{\overline{s}_i})  \times (\gd\otimes_{\CO_F}T^{(p)}(A_{\overline{s}_i})) 
\ar[d]_{(1,\lambda)}  &
\A_{F,\f}^{(p)}(1)
\ar[d]^{\Tr_{F/\Q}} \\
T^{(p)}(A_{\overline{s}_i})  \times (\Q \otimes T^{(p)}(A^\vee_{\overline{s}_i}) )
\ar[r]^-{\mathrm{Weil}} &
\A_{\f}^{(p)}(1)}$$
commutes (in the notation of \cite[\S2.2]{theta}, cf.~\cite[\S2.1.2]{DKS}).
Letting $[\epsilon]$ denote the fractional ideal of $F$ defined
by $\epsilon$ and
$L = \delta\gd^{-1} \oplus [\epsilon]\gd^{-1}$ for any
totally positive $\delta \in [\epsilon]^{-1}$ 
such that $\delta\CO_{F,p} = \gd \CO_{F,p}$,
the scheme $\widetilde{Y}_-^{\epsilon}$ is then isomorphic to
the one defined in \cite{KWL:spl} as $\M_{\CH^p,\CO}^{\naive}$ using the standard alternating pairing on $L$ (composed with the trace and twisted by any choice of $\Z \isoto \Z(1)$).  The isomorphism (and choice of $\CH^p$) are
given by modifying the level structure (resp.~quasi-polarization) of \cite{theta} by multiplication
 by $\diag(\delta^{-1},\epsilon^{-1})$ (resp.~$\delta$). 

By Corollary~2.4.8 of \cite{KWL:spl}, we also have an isomorphism 
$$\vec{\M}_{\CH,\CO} \stackrel{\sim}{\longrightarrow} \M_{\CH^p,\CO}^{\loc} = \M_{\CH^p,\CO}^{\naive}$$
(since $\M_{\CH^p}^{\naive}$ is flat over $\ZZ_p$ and normal, $\M_{\CH^p,\CO}^{\spl} \to \M_{\CH^p,\CO}^{\naive}$
is surjective and $\M_{\CH^p,K}^{\naive} = \M_{\CH,K}$, where $\CH = \CH^p\CU_p(\L)$ and $\L$ is the set of lattices in $F_p^2$ of the form $\ga \oplus \gd^{-1}\ga$ where $\ga$ is an invertible $\CO_{F,p}$-submodule of $F_p$).
Furthermore the condition in Theorem~6.1(6) of \cite{KWL:IMRN}
 is satisfied\footnote{To
 make the translation between our set-up and that of \cite{KWL:IMRN} at the polarized cusp $(H,I,\lambda,[\eta])$ 
of level $U = U(N)$, take $\underline{X}^{\ddag} = I^{-1}$, $\underline{Y}^{\ddag} = \gd^{-1}J$,
and
$\phi^\ddag: \underline{X}^\ddag \to \underline{Y}^\ddag$ to be $\delta$ times the homomorphism induced by $\lambda$, with
$\mathtt{Z}_\CH^\ddag$ and $[\alpha_\CH^{\natural,\ddag}]$ determined by the inclusion $I \subset H$ and 
the isomorphism $\gd L/N\gd L \isoto H/NH$ defined by 
$\eta\circ \diag(\delta^{-1},\epsilon^{-1})$.} by the connected components of $\widetilde{Y}_-^{\tor}$, yielding morphisms to the toroidal compactificatons $\vec{\M}^{\tor}_{\CH,\Sigma,\CO}$ satisfying the conclusion of \cite[Thm.~3.5]{rap}, so that the identification above extends to one between $\widetilde{Y}_-^{\tor}$ and an infinite
disjoint union of toroidal compactifications $\vec{\M}^{\tor}_{\CH,\Sigma,\CO}$ as in \cite{KWL:IMRN}.

\subsection{Minimal compactification of $\widetilde{Y}_-$} \label{ss:tY-min}
We continue to assume for the moment that $U$ is of the form $U(N)$.  The universal abelian scheme over $\widetilde{Y}_-$ extends to a semi-abelian scheme
over $\widetilde{Y}_-^\tor$, yielding also an extension of the line bundle $\widetilde{\omega}$.
Moreover the line bundle $\omega_{\vec{\M}^{\tor}_{\CH,\Sigma},J}$ on $\vec{\M}^{\tor}_{\CH,\Sigma,\CO}$
(with $J$ a singleton) is identified with the pull-back of our $\widetilde{\omega}^{\otimes a}$ for some
integer $a > 0$ by \cite[Thm.~6.1(2)]{KWL:IMRN}.  The scheme there denoted $\vec{\M}_{\CH,\CO}^{\min}$
therefore coincides with the projective scheme associated to the global sections of the symmetric algebra on
$\widetilde{\omega}$ over the corresponding connected components, and taking their
disjoint union yields the minimal compactification $\widetilde{Y}_- \hookrightarrow \widetilde{Y}_-^{\min}$.

The scheme $\widetilde{Y}_-^{\min}$ is normal, independent of the choice of cone decompositions in the definition of
$\widetilde{Y}_-^{\tor}$, and its connected components are flat and projective
over $\CO$.   We thus have a commutative diagram of morphisms
$$\xymatrix{&\widetilde{Y}_-^{\tor}\ar[dd]  \\
\widetilde{Y}_- \ar[rd]\ar[ru] & \\
& \widetilde{Y}_-^{\min} }$$
over $\CO$, where the diagonal morphisms are open immersions, and the vertical morphism is projective.
Furthermore the reduced complement of $\widetilde{Y}_-$ in 
$\widetilde{Y}_-^{\min}$ is \'etale over $\CO$ with geometric connected
components indexed by $\widetilde{C}$, and the
Koecher Principle (in the form of \cite[Thm.~8.7]{KWL:IMRN}) applies to give an explicit description of the
completion along this complement, as in \cite[(23)]{theta}.

\subsection{Minimal compactification of ${Y}_-$} \label{ss:Y-min}
The action of $\CO_{F,(p),+}^\times/(U\cap \CO_F^\times)^2$ on $\widetilde{Y}_-$
extends uniquely to an action on $\widetilde{Y}_-^{\min}$, and we define $Y_-^{\min}$ to be the quotient.
Furthermore for any sufficiently small level $U$ prime to $p$, we may choose $N \ge 3$ (prime to $p$) 
so that $U' := U(N) \subset U$ and define $Y_-^{\min}$ to be the quotient of $Y_-^{\prime\min}$ by the (unique
extension of the) natural action of $U$, where $Y_-^{\prime\min}$ is defined as above.  The resulting scheme
$Y_-^{\min}$ is then normal, flat and projective over $\CO$, and independent of the choice of $N$.
Furthermore the reduced complement of the image of the open immersion $Y_- \hookrightarrow Y_-^{\min}$
is \'etale over $\CO$ with geometric connected components indexed by $C$.  We assume $\CO$ is sufficiently large that the components are defined over $\CO$, and refer to them also as {\em cusps}.
The completion of $Y_-^{\min}$ at the cusp corresponding to $\underline{H}$ is decribed by (the displayed equation preceding) \cite[Prop.~7.2.1]{theta}.

\subsection{The Pappas--Rapoport model} \label{ss:PR}
We let $\widetilde{Y} = \widetilde{Y}_U$ denote the scheme defined in \cite[\S2.1]{theta}, obtained by equipping the universal object over $\widetilde{Y}_-$
with Pappas--Rapoport filtrations, 
and let $Y = \CO_{F,(p),+}^\times\backslash\widetilde{Y}$ denote the resulting smooth model for the
Hilbert modular variety.  Thus if $S$
 is a locally Noetherian $\CO$-scheme, then $\widetilde{Y}(S)$ is the set of isomorphism classes of data $(A,\iota,\lambda,\eta,\{\CF_\tau^\bullet\})$, where $(A,\iota,\lambda,\eta)$ defines an element of $\widetilde{Y}_-(S)$ and for each $\gp \in S_p$ and $\tau:\CO_F/\gp \to k$, $\CF_\tau^\bullet$ is an increasing filtration of 
$\CO_{F,\gp} \otimes_{W(\CO_F/\gp),\widetilde{\tau}} \CO_S$-modules 
$$0 = \CF_\tau^{(0)} \subset \CF_\tau^{(1)} \subset \cdots 
   \subset \CF_\tau^{(e_\gp - 1)} \subset \CF_\tau^{(e_\gp)} =  
   (s_*\Omega_{A/S}^1)_{\widetilde{\tau}}$$
such that for $j=1,\ldots,e_{\gp}$, the quotient
$${\CL}_{\tau,j}  :=  \CF_\tau^{(j)}/\CF_\tau^{(j-1)}$$
is a line bundle on $S$ on which $\CO_F$ acts via $\widetilde{\tau}_j$.

\subsection{Toroidal compactification of $\widetilde{Y}$} \label{ss:tYtor}
Again assuming $U$ is of the form $U(N)$ for some $N \ge 3$, 
the toroidal compactification $\widetilde{Y} \hookrightarrow \widetilde{Y}^{\tor}$
is defined similarly to that of $\widetilde{Y}_-$ (see \S\ref{ss:tY-tor}).  Thus $\widetilde{Y}^\tor$ is an infinite disjoint union of
projective schemes\footnote{The scheme $\widetilde{Y}^\tor$ is even smooth over
$\CO$ for suitable choice of cone decompositions.} over $\CO$, and the forgetful morphism
$\widetilde{Y} \to \widetilde{Y}_- $ extends to a projective morphism
$\widetilde{Y}^{\tor} \to \widetilde{Y}_-^\tor$.  Furthermore the
universal abelian scheme $A$ over $\widetilde{Y}$ extends to a semi-abelian scheme
$A^{\tor}$ over $\widetilde{Y}^{\tor}$.

Let $\widetilde{Y}^{\tord}$ (resp.~$\widetilde{Y}_-^{\tord}$) denote the ordinary locus in
$\widetilde{Y}^{\tor}$ (resp.~$\widetilde{Y}_-^{\tor}$), defined as the complement of the vanishing
locus of the Hasse invariant, viewed as a global section of the pull-back of $\widetilde{\omega}^{\otimes(p-1)}$
to the special fibre $\widetilde{Y}_k^{\tor}$ (resp.~$\widetilde{Y}_{-,k}^{\tor}$).  The existence and uniqueness
of Pappas--Rapoport filtrations over $\widetilde{Y}^{\tord}$ implies that the 
morphism $\widetilde{Y}^{\tor} \to \widetilde{Y}_-^{\tor}$ restricts to an isomorphism
$\widetilde{Y}^{\tord} \stackrel{\sim}{\lra} \widetilde{Y}_-^{\tord}$.  
We note also that $\widetilde{Y}^{\tor} = \widetilde{Y} \cup \widetilde{Y}^{\tord}$
and $\widetilde{Y}_-^{\tor} = \widetilde{Y}_- \cup \widetilde{Y}_-^{\tord}$,
and defining $\widetilde{Y}^{\ord} = \widetilde{Y} \cap \widetilde{Y}^{\tord}$ and
$\widetilde{Y}_-^{\ord} = \widetilde{Y}_- \cap \widetilde{Y}_-^{\tord}$,
the above isomorphism restricts to $\widetilde{Y}^{\ord} \stackrel{\sim}{\lra} \widetilde{Y}_-^{\ord}$.

Our $\widetilde{Y}$ is now an infinite disjoint union of the schemes
$\M_{\CH^p,\CO}^{\spl}$ of\footnote{Note that our filtrations
are increasing, whereas those in \cite{KWL:spl} are decreasing, and the condition in \cite[Def.~2.3.3(4)]{KWL:spl}
is automatic by \cite[Lemma~3.1.1]{theta}.} \cite{KWL:spl}, whose Corollary~2.4.10 gives an isomorphism 
$$\vec{\M}^{\spl}_{\CH,\CO} \stackrel{\sim}{\longrightarrow} \M_{\CH^p,\CO}^{\spl}.$$
(again with $\CH = \CH^p\CU_p(\L)$).  Just as for $\widetilde{Y}_-$, it follows that the isomorphism
extends to one between $\widetilde{Y}^{\tor}$ and an infinite disjoint of the schemes
$\vec{\M}^{\spl,\tor}_{\CH,\Sigma,\CO}$ defined in \cite{KWL:spl}, where the universal property
is now the one in \cite[Thm.~3.4.1(4)]{KWL:spl}.

\subsection{Minimal compactification of $\widetilde{Y}$} \label{ss:tYmin}
Letting $\widetilde{Y}^{\minor}$ denote the ordinary locus in $\widetilde{Y}_-^{\min}$,
defined as in \S\ref{ss:tYtor}, we have $\widetilde{Y}_-^{\min} = \widetilde{Y}_- \cup \widetilde{Y}^{\minor}$,
and we construct the  minimal compactification $\widetilde{Y}^{\min}$ by
gluing $\widetilde{Y}$ to $\widetilde{Y}^{\minor}$ along
$\widetilde{Y}^{\ord} \stackrel{\sim}{\lra} \widetilde{Y}_-^{\ord}$.
The scheme $\widetilde{Y}^{\min}$ is thus normal and independent of choice of cone decompositions,
its connected components are flat and projective over $\CO$, and we 
have a commutative diagram of morphisms
$$\xymatrix{&\widetilde{Y}^{\tor}\ar[dd]  \\
\widetilde{Y} \ar[rd]\ar[ru] & \\
& \widetilde{Y}^{\min} }$$
over the corresponding ones for $\widetilde{Y}_-$; 
again the diagonal morphisms are open immersions and the vertical morphism is projective.

We claim also that $\widetilde{Y}^{\min}$ 
is isomorphic to an infinite disjoint of the schemes denoted
$\vec{\M}^{\spl,\min}_{\CH,\CO}$ in \cite{KWL:spl}.  Indeed by the Koecher
Principle for the vertical morphisms in the diagram
$$\begin{array}{ccc} \vec{\M}^{\spl,\tor}_{\CH,\Sigma,\CO}  &\longrightarrow & \vec{\M}^{\tor}_{\CH,\Sigma,\CO} \\
\downarrow&&\downarrow \\ \vec{\M}^{\spl,\min}_{\CH,\CO} & \longrightarrow &  \vec{\M}^{\min}_{\CH,\CO} \end{array}$$
(\cite[Thm.~4.4.10]{KWL:spl}, \cite[Thm.~8.7]{KWL:IMRN}),
the fact that the top arrow is an isomorphism on the ordinary locus implies
that so is the bottom arrow.

For $\vec{k},\vec{m} \in \ZZ^\Theta$ and $R$ a Noetherian $\CO$-algebra,
let $\widetilde{\CA}_{\vec{k},\vec{m},R}$ be the associated line bundle on $\widetilde{Y}_{R}$ (see \cite[\S3.2]{theta}).
The extension of the (semi-)abelian scheme $A$ to $A^{\tor}$ over $\widetilde{Y}^{\tor}$ yields an extension
of $\widetilde{\CA}_{\vec{k},\vec{m},R}$ to $\widetilde{Y}_R$ which is formally canonical in the sense of \cite[Def.~8.5]{KWL:IMRN}
(see \cite[(20)]{theta}).  Therefore the Koecher Principle (\cite[Thm.~4.4.10]{KWL:spl}) applies to show that 
$\widetilde{j}_*\widetilde{\CA}_{\vec{k},\vec{m},R}$ is coherent, where $\widetilde{j}$ denotes the open immersion
$\widetilde{Y}_R \hookrightarrow \widetilde{Y}_R^{\min}$, and that its completion along the complement of $\widetilde{Y}_R$
is described as in \cite[(22)]{theta}.  

\subsection{Minimal compactification of $Y$} \label{ss:Ymin} For $U = U(N)$, we define $Y^{\min}$ to be the quotient of $\widetilde{Y}^{\min}$
by the (unique extension of the) action of $\CO_{F,(p),+}^\times/(U\cap \CO_F^\times)^2$.  More generally for any 
sufficiently small level $U$ prime to $p$, we let $Y^{\min}$ to be the quotient of $Y^{\prime\min}$ by the
(unique extension of the) action of $U$, where $Y^{\prime\min}$ is defined using $U' = U(N)$ for suitable $N$.  Just as for $Y_-$,
the resulting scheme $Y^{\min}$ is normal, flat and projective over $\CO$, and independent of the choice of $N$.
Furthermore the reduced complement of the image of the open immersion $Y \hookrightarrow Y^{\min}$, and
the completion along it, are the same as for $Y_- \hookrightarrow Y_-^{\min}$.  

Recall that if $\chi_{\vec{k} + 2\vec{m},R} = 1$ on $\CO_F^\times \cap U$, then the line bundle
$\widetilde{\CA}_{\vec{k},\vec{m},R}$ on $\widetilde{Y}_R$ descends to one on $Y_R$, which we denote
$\CA_{\vec{k},\vec{m},R}$.  Letting $i$ denote the open immersion $Y_R \hookrightarrow Y_R^{\min}$,
it follows from the analogous statements over $\widetilde{Y}_R^ {\prime\min}$
that $i_*{\CA}_{\vec{k},\vec{m},R}$ is coherent and that its completion along the complement of
the image of $i$ is given by \cite[Prop.~7.2.1]{theta}.

\section{Iwahori level at $p$} \label{sec:U0}

We now recall how the theory reviewed in \S\ref{sec:U} applies to
yield toroidal and minimal compactifications
of Hilbert modular varieties with Iwahori level at primes over $p$.

\subsection{The Iwahori-level model}\label{ss:Iw}
Let $\gP$ be a divisor of the radical of $p\CO_F$, and let
$$U_0(\gP) = \left\{\left.\,\left(\begin{array}{cc}a&b\\c&d\end{array}\right) \in U \,\right|\,\mbox{$c_\gp \in \gp\CO_{F,\gp}$ for all $\gp|\gP$}\,\right\}.$$
We let  $\widetilde{Y}_0(\gP)$ denote the corresponding fine moduli scheme parametrizing pairs of objects
$\underline{A}_1$, $\underline{A}_2$ of $\widetilde{Y}$, equipped with a $\gP$-isogeny $\psi:A_1 \to A_2$
respecting the additional structures (as defined in \cite[\S2.4]{KS}, thus depending on a choice of
$\varpi_{\gP}$ in the notation there).  Similarly we let $Y_0(\gP)$
denote its quotient by the action of $\CO_{F,(p),+}^\times$, so that $Y_0(\gP)$ is a model for
the Hilbert modular variety of level $U_0(\gP)$ (and is independent of $\varpi_{\gP}$).   We thus have a pair of forgetful morphisms
$$\widetilde{\pi}_1,\widetilde{\pi}_2:  \widetilde{Y}_0(\gP)  \longrightarrow \widetilde{Y}$$
inducing morphisms $\pi_1,\pi_2: Y_0(\gP) \to Y$ which on complex points correspond to the maps
$$\GL_2(F)_+\backslash  (\uhp^\Theta \times \GL_2(\A_{F,\f}) / U_0(\gP)) \longrightarrow 
\GL_2(F)_+\backslash  (\uhp^\Theta \times \GL_2(\A_{F,\f}) / U)$$
defined by the natural projection and multiplication by $\alpha_\gP := \prod_{\gp|\gP} \smat{1}{0}{0}{\varpi_\gp}_\gp$
for any choice of uniformizers $\varpi_\gp$ at $\gp$.

\subsection{Cusps of level $U_0(\gP)$}\label{ss:U0cusps}
We define the set of {\em  cusps} of level $U_0(\gP)$, denoted $C_0(\gP)$, exactly as we did for level $U$ (see \S\ref{ss:Ucusps}), but with $U$ now replaced
by $U_0(\gP)$.  We thus have the natural projections
$$\pi_1^{\infty},\pi_2^{\infty}:  C_0(\gP) 
= B(F)_+ \backslash \GL_2(\A_{F,\f}) /  U_0(\gP) \to C$$
defined by $B(F)_+ g U_0(\gp) \mapsto B(F)_+ g U$,  
$B(F)_+ g \alpha_\gP U$.
Furthermore we have a bijection between $C_0(\gP)$ and the set of isomorphism classes of triples
$(\underline{H}_1,\underline{H}_2,\alpha)$, where $\underline{H}_i = (H_i,I_i,[\lambda_i],[\eta_i])$
correspond to elements of $C$ for $i=1,2$, and $\alpha:H_1 \to H_2$ is an $\CO_F$-linear
homomorphism such that
\begin{itemize}
\item $\alpha(I_1) \subset I_2$;
\item $H_2/\alpha(H_1)$ is isomorphic to $\CO_F/\gP$;
\item $[\lambda_1] = [\varpi_\gp^{-1}\lambda_2 \circ \wedge^2\alpha]$;
\item $[\eta_2] = [\widehat{\alpha}^{(p)}\circ \eta_1]$.
\end{itemize}
Under this bijection the maps $\pi_i^{\infty}$ correspond to the
obvious forgetful maps, and we have a pair of bijections 
$$C_0(\gP) \longrightarrow C \times \{\,\gq \,|\, \gP \subset \gq \subset \CO_F\,\}$$
defined by $[\underline{H}_1,\underline{H}_2,\alpha] \mapsto ([\underline{H}_i], \gq_i )$
for $i=1,2$, where $\gq_1 = \Ann_{\CO_F}(J_2/\alpha(J_1)))$, $\gq_2 = \Ann_{\CO_F}(I_2/\alpha(I_1))= \gq_1^{-1}\gP$, and we write $[\unH_1,\unH_2,\alpha]$ for the isomorphism class of $(\unH_1,\unH_2,\alpha)$.

We also let $\widetilde{C}_0(\gP)$ denote the set of isomorphism
classes of triples $(\utH_1,\utH_2,\alpha)$, where now the
$\utH_i$ (resp.~$\alpha$) are as in the description of $\widetilde{C}$ (resp.~$C_0(\gP))$,
with the additional condition that $\lambda_2 \circ \wedge^2\alpha =\varpi_\gP\lambda_1$.  We then have a pair of bijections
$$\widetilde{C}_0(\gP) \longrightarrow \widetilde{C} \times \{\,\gq \,|\, \gP \subset \gq \subset \CO_F\,\}$$
defined in the same way as for $C_0(\gP)$, and we write $[\utH_1,\utH_2,\alpha]$ for the isomorphism class of $(\utH_1,\utH_2,\alpha)$.

\subsection{Toroidal compactification of $\widetilde{Y}_0(\gP)$} \label{ss:tY0tor}
Once again we assume that $U= U(N)$ for some $N \ge 3$, with the $N^{\mathrm{th}}$ roots of unity contained in $\CO$, and define the toroidal compactification
$\widetilde{Y}_0(\gP) \hookrightarrow \widetilde{Y}_0(\gP)^{\tor}$ as in \cite{rap}.  More precisely for
each cusp in $\widetilde{C}_0(\gP)$, we choose an admissible cone decomposition of 
$(J_2^{-1}I_1\otimes\RR)_{\ge 0}$ and construct the toroidal
compactification using the morphism of semi-abelian schemes defined by
\begin{equation}\label{eqn:Tate} (\gd^{-1}I_1 \otimes \G_m)/\widetilde{q}^{\gd^{-1}J_1} \longrightarrow (\gd^{-1}I_2 \otimes \G_m)/\widetilde{q}^{\gd^{-1}J_2},\end{equation}
over the resulting formal scheme.  The universal isogeny $\psi: A_1 \to A_2$ over $\widetilde{Y}_0(\gP)$ then extends to a
morphism $A_1^\tor \to A_2^\tor$ of semi-abelian schemes over $\widetilde{Y}_0(\gP)^\tor$ whose completion along the complement
of $\widetilde{Y}_0(\gP)$ is described by (\ref{eqn:Tate}).

Our $\widetilde{Y}_0(\gP)$ can again be identified with an infinite disjoint union of schemes of the form $\M_{\CH^p,\CO}^{\spl}$ considered in \cite{KWL:spl}, where $\L$ is now the set of lattices in $F_p^2$ of the form $\ga \oplus \gd^{-1}\gq\ga$ for an invertible $\CO_{F,p}$-submodule $\ga$ of $F_p$ and an ideal $\gq$ of $\CO_F$ containing $\gP$.  (See \cite[\S2.4]{KS}, especially (3) for the condition in \cite[Def.~2.3.3(4)]{KWL:spl}.)  Just as for $\widetilde{Y}$, it follows from
\cite[Cor.~2.4.10]{KWL:spl} that $\widetilde{Y}_0(\gP)$ is
an infinite disjoint union of schemes of the form
$\vec{\M}^{\spl}_{\CH,\CO}$, where now $\CH = \CH^p \CU_p(\L)$
for this choice of $\L$, and that the identification
extends to one between $\widetilde{Y}_0(\gP)^{\tor}$ and an infinite disjoint of the schemes
$\vec{\M}^{\spl,\tor}_{\CH,\Sigma,\CO}$ defined in \cite{KWL:spl}.  Furthermore if the admissible cone
decomposition for each cusp $[\utH_1,\utH_2,\alpha]$ in $\widetilde{C}_0(\gP)$
is chosen to refine those for the cusps $[\utH_1]$ and $[\utH_2]$ in $\widetilde{C}$,
then the same universal property implies that each $\widetilde{\pi}_i$ extends to a
morphism $\widetilde{\pi}_i^{\tor}:\widetilde{Y}_0(\gP)^\tor \to \widetilde{Y}^\tor$ under which the semi-abelian
scheme $A_i^\tor$ is the pull-back of the extension $A^\tor$ of the universal abelian
scheme $A$ over $\widetilde{Y}$.

We summarize the main results as follows:
\begin{theorem} \label{thm:tY0tor} There is a normal scheme 
$\widetilde{Y}_0(\gP)^{\tor}$ over $\CO$ and an open immersion 
$\widetilde{Y}_0(\gP) \hookrightarrow\widetilde{Y}_0(\gP)^{\tor}$ with the following properties:
\begin{enumerate}
\item $\widetilde{Y}_0(\gP)^{\tor}$ is flat and Cohen--Macaulay    over $\CO$ with projective connected components, and there is a canonical bijection $\widetilde{c} \longleftrightarrow \widetilde{Z}_{\widetilde{c}}$ between $\widetilde{C}_0(\gP)$ and the set of connected components of the reduced complement of $\widetilde{Y}_0(\gp)$ in $\widetilde{Y}_0(\gP)^\tor$; 
\item the completion of $\widetilde{Y}_0(\gP)^{\tor}$ along
$\widetilde{Z}_{\widetilde{c}}$ is isomorphic to $\widehat{S}_0(\gP)/(\CO_F^\times \cap U)^2$, where $\widehat{S}_0(\gP)$ is the  completion of the complement of $\Spec(\CO[N^{-1}M])$ in the torus embedding defined by the chosen cone decomposition of $ \Hom(M,\R)_{\ge 0}$, $\widetilde{c} = [\utH_1,\utH_2,\alpha]$ and $M = \gd^{-1}I_1^{-1}J_2$;
\item the cone decompositions may be chosen so that the degeneracy
maps $\widetilde{\pi}_1$ and $\widetilde{\pi}_2$ extend to morphisms $\widetilde{Y}_0(\gP)^{\tor} \to \widetilde{Y}^\tor$, and the universal isogeny $\psi:A_1 \to A_2$ over $\widetilde{Y}_0(\gP)$ extends to an isogeny of semi-abelian schemes over $\widetilde{Y}_0(\gP)^{\tor}$ whose completion along $\widetilde{Z}_{\widetilde{c}}$ has the form (\ref{eqn:Tate}).
\end{enumerate}
\end{theorem}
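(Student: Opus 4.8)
The plan is to reduce everything to the Pappas--Rapoport models and their compactifications already recalled in \S\ref{sec:U}, via the identification of $\widetilde{Y}_0(\gP)$ with an infinite disjoint union of the splitting models $\M_{\H^p,\CO}^{\spl}$ of \cite{KWL:spl}, and then to transport the structure along the forgetful morphisms.  First I would fix $U = U(N)$ with $N \ge 3$ prime to $p$ and $\CO$ containing the $N^{\mathrm{th}}$ roots of unity, as in the statement, and recall (as explained in the paragraph preceding the theorem) that for the lattice set $\L$ consisting of $\ga \oplus \gd^{-1}\gq\ga$ (for invertible $\CO_{F,p}$-submodules $\ga$ of $F_p$ and ideals $\gq \supseteq \gP$), the scheme $\widetilde{Y}_0(\gP)$ is identified with a disjoint union of $\vec{\M}^{\spl}_{\H,\CO}$ with $\H = \H^p\CU_p(\L)$; then \cite[Thm.~3.4.1]{KWL:spl} produces the toroidal compactification $\vec{\M}^{\spl,\tor}_{\H,\Sigma,\CO}$ for each choice of compatible admissible cone decomposition $\Sigma$, and taking the disjoint union defines $\widetilde{Y}_0(\gP)^{\tor}$ together with the open immersion.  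Normality and flatness, together with projectivity and Cohen--Macaulayness of the connected components, follow from the corresponding properties recorded in \cite{KWL:spl} (the Cohen--Macaulay property being the reason one works with the splitting model rather than the naive one over a ramified prime); this yields the first clause of (1).

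For the remainder of (1) and for (2), I would unwind the boundary description of \cite{KWL:spl} in the style of \S\ref{ss:tY0tor}: the connected components of the reduced complement are indexed by the cusps, and to match these with $\widetilde{C}_0(\gP)$ I would use the dictionary set up in \S\ref{ss:U0cusps} between cusps of level $U_0(\gP)$ and triples $(\utH_1,\utH_2,\alpha)$, passing through the translation already used for $\widetilde{Y}_-$ in the footnote of \S\ref{ss:KWL} (taking $\underline{X}^{\ddag}$, $\underline{Y}^{\ddag}$ and $\phi^{\ddag}$ from the data of each $\utH_i$, and reading off $\gq$ from the isogeny $\alpha$).  The completion along $\widetilde{Z}_{\widetilde{c}}$ is then the completion of the torus embedding attached to the chosen cone decomposition of $(I_1J_2^{-1}\otimes\R)_{\ge 0}$, with $M = \gd^{-1}I_1^{-1}J_2$, modulo the free action of $(\CO_F^\times\cap U)^2$ — exactly as in \S\ref{ss:tY-tor}, now with $I_1,J_2$ in place of $I,J$; the relevant Mumford-type construction over the Tate object (\ref{eqn:Tate}) is precisely what \cite[Thm.~3.4.1]{KWL:spl} (or, equivalently, the method of \cite[\S5]{rap}) records.

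For (3), the point is to arrange compatibility of the three cone decompositions.  If the admissible cone decomposition chosen for each cusp $[\utH_1,\utH_2,\alpha]$ of $\widetilde{C}_0(\gP)$ refines, under the two maps $\pi_i^{\infty}$ of \S\ref{ss:U0cusps} (which send $M = \gd^{-1}I_1^{-1}J_2$ into $\gd^{-1}I_i^{-1}J_i$ via $\alpha$), the decompositions used for $[\utH_1]$ and $[\utH_2]$ in the definition of $\widetilde{Y}^{\tor}$, then the universal property of toroidal compactifications of splitting models (\cite[Thm.~3.4.1(4)]{KWL:spl}) gives that each $\widetilde{\pi}_i$ extends to $\widetilde{\pi}_i^{\tor}:\widetilde{Y}_0(\gP)^{\tor}\to\widetilde{Y}^{\tor}$ with $A_i^{\tor} = (\widetilde{\pi}_i^{\tor})^*A^{\tor}$; the universal isogeny then extends by functoriality of the Mumford construction, and its completion along $\widetilde{Z}_{\widetilde{c}}$ is the map of Tate objects (\ref{eqn:Tate}) by construction.

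The main obstacle is the compatibility of cone decompositions in (3): one must check that refinements can be chosen \emph{simultaneously} for all cusps of $\widetilde{C}_0(\gP)$ while remaining $(\CO_F^\times\cap U)^2$-invariant and admissible in the sense of \cite{rap}, and that the two maps $M\hookrightarrow\gd^{-1}I_i^{-1}J_i$ really are compatible with the monoid structures so that a common refinement exists — this is a bookkeeping point about the combinatorics of the boundary, but it is where the argument has genuine content rather than being a citation. Everything else is a matter of assembling results of \cite{KWL:spl}, \cite{KWL:IMRN} and \cite{rap} that have already been invoked for $\widetilde{Y}_-$ and $\widetilde{Y}$ in \S\ref{sec:U}.
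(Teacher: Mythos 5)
Your proposal is correct and takes essentially the same route as the paper: the paper constructs $\widetilde{Y}_0(\gP)^{\tor}$ by Rapoport's method (admissible cone decompositions and the Tate construction (\ref{eqn:Tate})), identifies it via \cite[Cor.~2.4.10]{KWL:spl} with a disjoint union of the splitting-model compactifications $\vec{\M}^{\spl,\tor}_{\H,\Sigma,\CO}$, and deduces part (3) from the universal property \cite[Thm.~3.4.1(4)]{KWL:spl} after choosing the cone decomposition at each cusp $[\utH_1,\utH_2,\alpha]$ to refine those at $[\utH_1]$ and $[\utH_2]$ --- exactly your ingredients, with only the order of emphasis (Lan-first versus Rapoport-first) reversed. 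One minor slip: the maps induced by $\alpha$ go $M_i=\gd^{-1}I_i^{-1}J_i\hookrightarrow M=\gd^{-1}I_1^{-1}J_2$ (identifying $M_i$ with $\gq_i M$), not $M$ into $M_i$, but since the real cones $\Hom(M,\R)_{\ge 0}=\Hom(M_i,\R)_{\ge 0}$ coincide this does not affect your refinement argument.
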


Just as for $Y_-$ (see \S\ref{ss:tY-tor}), one can obtain a toroidal compactification $Y_0(\gP)^\tor$ of $Y_0(\gP)$ as a quotient of $\widetilde{Y}_0(\gP)^\tor$.  This applies in particular to $Y = Y_0(\CO_F)$; furthermore one can relax the restriction that $U = U(N)$.

\subsection{Minimal compactification of $\widetilde{Y}_0(\gP)$} \label{ss:tY0min}
Let $\widetilde{Y}_0(\gP)^{\ord}$ (resp.~$\widetilde{Y}_0(\gP)^{\tord}$) denote the ordinary locus in 
$\widetilde{Y}_0(\gP)$ (resp.~$\widetilde{Y}_0(\gP)^{\tor}$), and define 
$$\widetilde{Y}_0(\gP)^{\minor} = \SPEC(f_*\CO_{\widetilde{Y}_0(\gP)^{\tord}}),$$
where $f:\widetilde{Y}_0(\gP)^{\tord} \to \widetilde{Y}^{\minor}$ is the restriction
of the composite of the extension of $\widetilde{\pi}_1$ with the projection
$\widetilde{Y}^{\tor} \to \widetilde{Y}^{\min}$.  Since the restriction of $\widetilde{\pi}_1$
to $\widetilde{Y}_0(\gP)^{\ord} \to \widetilde{Y}^{\ord}$ is finite, we may identify
$\widetilde{Y}_0(\gP)^{\ord}$ with an open subscheme of $\widetilde{Y}_0(\gP)^{\minor}$
and define $\widetilde{Y}_0(\gP)^{\min}$ by gluing $\widetilde{Y}_0(\gP)^{\minor}$ to
$\widetilde{Y}_0(\gP)$ along $\widetilde{Y}_0(\gP)^{\ord}$.  Just as for $\widetilde{Y}$
(i.e., the case $\gP = \CO_F$), we see that
$\widetilde{Y}_0(\gP)^{\min}$ is normal and independent of choice of cone decompositions,
its connected components are flat and projective over $\CO$, and we have a commutative diagram
$$\xymatrix{&\widetilde{Y}_0(\gP)^{\tor}\ar[dd]  \\
\widetilde{Y}_0(\gP) \ar[rd]\ar[ru] & \\
& \widetilde{Y}_0(\gP)^{\min} }$$
over the corresponding one for $\widetilde{Y}$, the
vertical (resp.~diagonal) morphism(s) being projective
(resp.~open immersions).

We can again identify our minimal compactification $\widetilde{Y}_0(\gP)^{\min}$ with an infinite disjoint union of the schemes denoted $\vec{\M}_{\CH,\CO}^{\spl,\min}$ in \cite{KWL:spl}.
Indeed since $\vec{\M}_{\CH,\CO}^{\spl,\min}$ is normal, we have
$\SPEC(g_*\CO_{\vec{\M}_{\CH,\Sigma,\CO}^{\spl,\tor}})
 = \vec{\M}_{\CH,\CO}^{\spl,\min}$, where $g$ is the projective
morphism $\vec{\M}_{\CH,\Sigma,\CO}^{\spl,\tor} \to
 \vec{\M}_{\CH,\CO}^{\spl,\min}$, and the morphism 
$\vec{\M}_{\CH,\CO}^{\spl,\min} \to \widetilde{Y}^{\min}$ is finite
over the ordinary locus, so we may identify $\widetilde{Y}_0(\gP)^{\minor}$ with the ordinary locus of the infinite disjoint union of the $\vec{\M}_{\CH,\CO}^{\spl,\min}$.

\subsection{Minimal compactification of ${Y}_0(\gP)$} \label{ss:Y0min}
The action of $\CO_{F,(p),+}^\times/(U\cap \CO_F^\times)$ on $\widetilde{Y}_0(\gP)$ again extends to $\widetilde{Y}_0(\gP)^{\min}$, and we define $Y_0(\gP)$ to be the quotient.

\begin{theorem} \label{thm:Y0min}
There is a normal scheme $Y_0(\gP)^{\min}$ over $\CO$
and an open immersion $Y_0(\gP) \hookrightarrow Y_0(\gP)^{\min}$
with the following properties:
\begin{enumerate}
\item $Y_0(\gP)^{\min}$ is normal, flat and projective over $\CO$, and independent of the choice of cone decompositions in its construction, and there is a canonical isomorphism between the reduced complement of $Y_0(\gP)$ in $Y_0(\gP)^{\min}$ and $\coprod_{c\in C_0(\gP)} Z_c$, where each $Z_c$ is isomorphic to $\Spec(\CO)$;
\item the completion of $Y_0(\gP)^{\min}$ along $Z_c$ is isomorphic to $\Spf(P_c)$, where
$$P_c =
\left\{\, \left. \sum_{m \in N^{-1}M_+ \cup \{0\}}\!\!\!\!\!  t_mq^m \,
\right| \, t_{\nu m} = t_m \in \CO\,\, \forall\,\, m \in N^{-1}M_+, \nu \in U \cap \CO_{F,+}^\times \,\right\}$$
for $c = [\underline{H}_1,\underline{H}_2,\alpha]$ and $M=\gd^{-1}I_1^{-1}J_2$;
\item the morphisms $\pi_i$ extend to $Y_0(\gP)^{\min} \to Y^{\min}$, with the restriction to the complement of $Y_0(\gP)$ being $\pi_i^{\infty}$ and the completion at $Z_c$ being the inclusion of local rings obtained by replacing $M$ by $M_i = \gd^{-1}I_i^{-1}J_i \stackrel[\alpha]{\sim}{\longrightarrow} \gq_i M$; in particular, $\pi_i$ is \'etale in a neighborhood of each cusp corresponding to a pair $([\underline{H}_i],\gq_i)$ such that $\gq_i = \CO_F$.
\end{enumerate}
\end{theorem}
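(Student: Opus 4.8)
The plan is to deduce everything from Theorem~\ref{thm:tY0tor}, following step by step the treatment of the case $\gP = \CO_F$ (i.e.\ of $\widetilde{Y}^{\min}$ and $Y^{\min}$) in \S\ref{ss:tYmin}--\S\ref{ss:Ymin}, with the single Tate semi-abelian scheme replaced throughout by the $\gP$-isogeny of Tate objects~(\ref{eqn:Tate}). Concretely, for $U = U(N)$ one defines $Y_0(\gP)^{\min}$ as the quotient of $\widetilde{Y}_0(\gP)^{\min}$ by the extension of the $\CO_{F,(p),+}^\times$-action from \S\ref{ss:tY0min}--\S\ref{ss:Y0min}, and for general sufficiently small $U$ one passes through $U' = U(N) \subset U$ and quotients by $U/U'$, exactly as for $Y^{\min}$. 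The properties in~(1) --- normality, flatness, projectivity of the connected components and independence of the cone decompositions --- then descend from the corresponding facts about $\widetilde{Y}_0(\gP)^{\min}$ recorded in~\S\ref{ss:tY0min}, using that $\CO$ is a discrete valuation ring and that a quotient by a finite group is finite (so flatness over $\CO$ is torsion-freeness, which passes to invariants, while normality and projectivity are preserved). For the complement I would combine Theorem~\ref{thm:tY0tor}(1) with the fact that the projection $\widetilde{Y}_0(\gP)^{\tor} \to \widetilde{Y}_0(\gP)^{\min}$ contracts each $\widetilde{Z}_{\widetilde c}$ to a connected component of the reduced complement of $\widetilde{Y}_0(\gP)$ in $\widetilde{Y}_0(\gP)^{\min}$, which is finite \'etale over $\CO$; quotienting by $\CO_{F,(p),+}^\times$ reindexes these by $C_0(\gP)$ via the bijection of~\S\ref{ss:U0cusps}, and over a sufficiently large $\CO$ each splits into copies of $\Spec(\CO)$.

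For~(2), the first step is to note that, being normal with projective connected components, $Y_0(\gP)^{\min}$ is recovered from its toroidal compactification as the relative $\SPEC$ of the pushforward of the structure sheaf along $Y_0(\gP)^{\tor} \to Y_0(\gP)^{\min}$; this is built into the construction of $\widetilde{Y}_0(\gP)^{\minor}$ in~\S\ref{ss:tY0min}. Completing along $Z_c$ and feeding in the explicit toroidal completion of Theorem~\ref{thm:tY0tor}(2), I would identify the completion of $Y_0(\gP)^{\min}$ along $Z_c$ with the subring of the completed coordinate ring of the torus embedding of $\Hom(M,\R)_{\ge 0}$ spanned by the monomials $q^m$ with $m \in N^{-1}M_+ \cup \{0\}$ --- precisely the sections that extend over the boundary divisor --- and then cut it down by taking invariants under the combined action on $M$ of $(\CO_F^\times \cap U)^2$ (already present in the toroidal completion) and of $\CO_{F,(p),+}^\times$ (used to form the quotient). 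The resulting ring is $P_c$; this is the same computation that gives \cite[Prop.~7.2.1]{theta} when $\gP = \CO_F$.

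For~(3), I would first extend the degeneracy maps. By Theorem~\ref{thm:tY0tor}(3) the $\widetilde{\pi}_i$ extend to $\widetilde{\pi}_i^{\tor} : \widetilde{Y}_0(\gP)^{\tor} \to \widetilde{Y}^{\tor}$; composing with $\widetilde{Y}^{\tor} \to \widetilde{Y}^{\min}$ and invoking the universal property of the Stein factorization $\widetilde{Y}_0(\gP)^{\tor} \to \widetilde{Y}_0(\gP)^{\min}$ (applicable for both $i$, since $\widetilde{Y}_0(\gP)^{\min}$ is normal and this projection has connected fibres by construction) produces $\widetilde{Y}_0(\gP)^{\min} \to \widetilde{Y}^{\min}$, which descends to $\pi_i : Y_0(\gP)^{\min} \to Y^{\min}$. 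On the reduced complement this is, by construction, the forgetful map on cusps, i.e.\ $\pi_i^{\infty}$ under the bijections of~\S\ref{ss:Ucusps} and~\S\ref{ss:U0cusps}. On the completion at $Z_c$, Theorem~\ref{thm:tY0tor}(3) and~(\ref{eqn:Tate}) show that $\widetilde{\pi}_i^{\tor}$ is induced by the morphism of torus embeddings attached to the isomorphism $M_i = \gd^{-1}I_i^{-1}J_i \cong \gq_i M \subset M$ coming from $\alpha$; passing to functions this is the inclusion of $q$-expansion rings obtained by reindexing $M$ via $M_i$, which is an isomorphism exactly when $\gq_i = \CO_F$, whence $\pi_i$ is \'etale in a neighborhood of the corresponding cusp.

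The step I expect to be the real obstacle is the completion computation in~(2): one must check carefully that the passage from the toroidal completion (a quotient of a formal torus embedding) to the minimal completion amounts exactly to discarding the monomials $q^m$ with $m \notin N^{-1}M_+ \cup \{0\}$, and that the $(\CO_F^\times \cap U)^2$-action in Theorem~\ref{thm:tY0tor}(2) together with the $\CO_{F,(p),+}^\times$-action used to build $Y_0(\gP)^{\min}$ combine to give precisely the invariance $t_{\nu m} = t_m$ for $\nu \in U \cap \CO_{F,+}^\times$. This is the Koecher Principle in the present setting; the argument is the one already carried out for $\widetilde{Y}$ and $Y$ in~\S\ref{ss:tYmin}--\S\ref{ss:Ymin} and~\cite[\S7]{theta}, with the isogeny~(\ref{eqn:Tate}) in place of a single Tate object, and it carries over with only notational changes.
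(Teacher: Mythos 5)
Your proposal is correct and follows essentially the same route as the paper: everything is reduced to the corresponding statements for $\widetilde{Y}_0(\gP)^{\min}$, parts (1)--(2) come from the boundary-chart/Koecher computation (which the paper outsources to the arguments of Chai, Dimitrov and Lan rather than redoing them as you sketch), and part (3) follows from the toroidal extensions in Theorem~\ref{thm:tY0tor}(3) together with the construction and normality of the minimal compactification.
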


The assertions in the theorem all follow from analogous ones with $Y_0(\gP)^{\min}$ replaced by $\widetilde{Y}_0(\gP)^{\min}$.  The first two parts can then be proved by minor modifications of the arguments in \cite[\S8]{chai} (see also \cite[\S4]{Dim}) or seen as a particular case of \cite[Thm.~4.3.1]{KWL:spl} (in which   
$\vec{\mathsf{Z}}^{\spl}_{[(\Phi_{\CH},\delta_{\CH})]} =
\vec{\mathsf{Z}}_{[(\Phi_{\CH},\delta_{\CH})]} = \Spec(\CO)$ for each $\Phi_{\CH} \neq 0$).  We note that, as in the discussion following \cite[(18)]{theta}, the isomorphism in (2) depends on a choice of splittings of the exact sequences $0 \to I_i \to H_i \to J_i \to 0$ for $i=1,2$, which we take to be compatible with $\alpha$.
Part (3) is then immediate from the construction of $\widetilde{Y}_0(\gP)^{\min}$ and part (3) of Theorem~\ref{thm:tY0tor}.

More generally, for any sufficiently small level $U$ and $N$ prime to $p$ such that $U' = U(N) \subset U$, the action of $U/U'$ on $Y'_0(\gP)$ extends to $Y'_0(\gP)^{\min}$, and we define $Y_0(\gP)^{\min}$ to be the quotient.  The resulting scheme is then independent of the choice of $N$ (up to changing the base $\CO$), and Theorem~\ref{thm:Y0min} holds exactly as stated above, except that the general description of the completed local ring $P_c$ is slightly more complicated (see Proposition~\ref{prop:koecher} below).  More precisely, it is given by the expression in the discussion preceding \cite[Prop.~7.2.1]{theta}, but with $\calC \in C'_0(\gP)$ lying over $c \in C_0(\gP)$ and $J^{-1}I$ replaced by $J_2^{-1}I_1$ in the definition\footnote{Note that the roles of $U$ and $U'$ are reversed here with respect to \cite{theta}, and that $\Gamma_{\calC} \subset \Aut_{\CO_F}(J_2 \times I_1)$ may be identified with
 the intersection of the groups similarly defined for the cusps 
$\underline{H}_i = \pi_i^{\infty}(\calC)$ of $C'$, or more precisely their images in $\Aut_{\CO_F}(J_2 \times I_1)\otimes \Q$ under the isomorphisms induced by $\alpha$.}
 of the group $\Gamma_{\calC}$ in \cite[(24)]{theta}.  
The resulting group $\Gamma_{\calC,U}$ appearing in the expression is thus isomorphic to $\Aut(\unH_1,\unH_2,\alpha)$, with the isomorphism depending on the choice of (compatible) splittings,
and hence to $B(F)_+ \cap g U_0(\gP) g^{-1}$ if $[\unH_1,\unH_2,\alpha]$ corresponds to the double coset 
$B(F)_+ g U_0(\gP)$.

\subsection{Hecke action}
Suppose that $g \in \GL_2(\A_{F,\f}^{(p)})$, $U$ and $U'$ are any sufficiently small (prime-to-$p$) levels such that $U' \subset gUg^{-1}$.  We then have the morphism 
$$\widetilde{\rho}_g:\widetilde{Y}'_0(\gP) \to \widetilde{Y}_0(\gP)$$
defined by the data $(\underline{A}_1,\underline{A}_2,\alpha)$, where if $(\underline{A}'_1,\underline{A}'_2,\alpha')$ is the universal triple over $\widetilde{Y}_0'(\gP)$, then the abelian schemes $A_i$ (for $i=1,2$) are characterized by the existence of a prime-to-$p$ quasi-isogeny $\psi_i:A_i' \to A_i$ such that the composite
$$(\A_{F,\f}^{(p)})^2
  \stackrel{\cdot g^{-1}}{\longrightarrow} (\A_{F,\f}^{(p)})^2
  \stackrel{\eta_i'}{\longrightarrow} T^{(p)}(A'_{i,\overline{s}})\otimes \Q
\stackrel{\psi_i}{\longrightarrow} T^{(p)}(A_{i,\overline{s}})\otimes \Q$$
induces an isomorphism $\eta_i: (\widehat{\CO}_F^{(p)})^2 \stackrel{\sim}{\to} \gd\otimes_{\CO_F} T^{(p)}(A_{i,\overline{s}})$ for all geometric points $\overline{s}$ of $\widetilde{Y}'_0(\gP)$, with the rest of the data defining 
$(\underline{A}_1,\underline{A}_2,\alpha)$ determined by the obvious compatibilities between the pair $(\psi_1,\psi_2)$ and the triple 
$(\underline{A}'_1,\underline{A}'_2,\alpha')$.
The resulting morphism $\widetilde{\rho}_g$ thus descends to a morphism $\rho_g:Y'_0(\gP) \to Y_0(\gP)$ giving rise to the map
on complex points
$$\GL_2(F)_+\backslash(\uhp^\Theta\times \GL_2(\A_{F,\f})/U'_0(\gP)) \longrightarrow 
\GL_2(F)_+\backslash(\uhp^\Theta\times \GL_2(\A_{F,\f})/U_0(\gP))$$
induced by right multiplication by $g$;
in particular $\rho_g\circ\rho_{g'} = \rho_{g'g}:Y''_0(\gP) \to Y_0(\gP)$ if 
$U'' \subset g'U'(g')^{-1}$.

Similarly (but more simply), there is a map 
$\widetilde{C}'_0(\gP) \to \widetilde{C}_0(\gP)$ sending $[\utH'_1,\utH'_2,\alpha']$ to the triple $[\utH_1,\utH_2,\alpha]$ such that $H_{i,(p)} = H'_{i,(p)}$ and $\eta_i' = \eta_i \circ g$ for $i=1,2$, where 
$g$ denotes right-multiplication by $g^{-1}$ and the rest of the data is determined by the obvious compatibilities.  Again this
descends to the map $\rho_g^{\infty}: C'_0(\gP) \to C_0(\gP)$ induced by right multiplication by $g$ on double cosets and satisfying the usual compatibility relation for varying $g$, $U$ and $U'$.

We claim that $\rho_g$ extends via $\rho_g^{\infty}$ to a morphism $Y'_0(\gP)^{\min} \to Y_0(\gP)^{\min}$ whose completion at the cusps has a simple description.  In order to make this precise, first recall that if $c = [\underline{H}_1,\underline{H}_2,\alpha]\in C_0(\gP)$ (resp.~$c' = [\underline{H}'_1,\underline{H}'_2,\alpha']\in C'_0(\gP)$), then a choice of splittings $\sigma_i: H_i \stackrel{\sim}{\to} J_i \times I_i$ compatible with  $\alpha$ (resp.~ $\sigma'_i: H'_i \stackrel{\sim}{\to} J'_i \times I'_i$ compatible with  $\alpha'$) is implicit in the isomorphism of Theorem~\ref{thm:Y0min}(2) describing the completion of $Y_0(\gP)^{\min}$ along $Z_c$
(resp.~$Y_0'(\gP)^{\min}$ along $Z'_{c'}$).
If $c = \rho_g^{\infty}(c')$, then $I_{i,(p)} = I'_{i,(p)} \subset H_{i,(p)} = H'_{i,(p)}$,
$\sigma_i = \smat{1}{\epsilon_i}{0}{1} \circ \sigma'_i$
for some
$\epsilon_i \in M^*_{i,(p)} = \Hom_{\CO_F}(J_i,I_{i,(p)})$
(with notation as in Theorem~\ref{thm:Y0min}(3)) and the matrix acting on $(J_i \times I_i)_{(p)}$ by right-multiplication), and the compatibilities with $\alpha$ and $\alpha'$ imply that $\epsilon_1 = \epsilon\circ \alpha$ and $\epsilon_2 = \alpha \circ \epsilon$ for some 
$$\epsilon \in M^*_{(p)} = \Hom_{\CO_F}(J_2,I_{1,(p)}).$$

\begin{lemma} \label{lem:hecke} 
With notation as above, the morphism 
$\rho_g:Y'_0(\gP) \to Y_0(\gP)$ extends uniquely to a morphism 
$Y'_0(\gP)^{\min} \to Y_0(\gP)^{\min}$.
Its restriction to the complement of 
$Y'_0(\gP)$ corresponds under Theorem~\ref{thm:Y0min}(1) to the morphism defined by $\rho_g^{\infty}$, and the resulting morphisms
on completions correspond under Theorem~\ref{thm:Y0min}(2) to the ones defined by
$$\sum_{m \in N^{-1}M_+ \cup \{0\}}\!\!\!\!\! t_m q^m \mapsto
   \sum_{m \in N^{-1}M_+ \cup \{0\}} \!\!\!\!\!
\zeta_{N'}^{-\epsilon(N'm)} t_m q^m$$
for any $N,N'$ such that $U(N) \subset U$ and $U(N') \subset U' \cap gU(N)g^{-1}$.
\end{lemma}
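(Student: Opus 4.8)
The plan is to reduce everything to the already-established structure on minimal compactifications and Koecher-type completions from Theorem~\ref{thm:Y0min}, so that the only genuinely new content is the local formula at the cusps. First I would establish existence and uniqueness of the extension $\rho_g:Y'_0(\gP)^{\min} \to Y_0(\gP)^{\min}$. Uniqueness is immediate: $Y'_0(\gP)^{\min}$ is normal and flat over $\CO$ with $Y'_0(\gP)$ dense, and $Y_0(\gP)^{\min}$ is separated, so two extensions of $\rho_g$ agreeing on a dense open subscheme coincide. For existence, I would argue as in the construction of $\widetilde{Y}_0(\gP)^{\min}$ itself (see \S\ref{ss:tY0min}): by the description of $Y_0(\gP)^{\min}$ (resp.~$Y'_0(\gP)^{\min}$) as obtained by gluing $Y_0(\gP)$ to its ordinary-locus minimal compactification, and since $\rho_g$ is a prime-to-$p$ morphism, it is automatically compatible with the Hasse invariant and hence restricts to a map on ordinary loci; the extension over the cusps is then obtained either by passing through the toroidal compactifications $\widetilde{Y}'_0(\gP)^{\tor} \to \widetilde{Y}_0(\gP)^{\tor}$ (where the analogous extension of $\widetilde{\rho}_g$ exists by choosing compatible refining cone decompositions as in Theorem~\ref{thm:tY0tor}(3), and then applying $\SPEC(f_*(-))$) or, more cleanly, as a special case of the functoriality of Lan's $\vec{\M}^{\spl,\min}_{\H,\CO}$ under prime-to-$p$ changes of level structure in \cite{KWL:spl}. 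Taking the quotient by $\CO_{F,(p),+}^\times$ (and by $U/U'$ if $U \neq U(N)$) descends this to $Y'_0(\gP)^{\min} \to Y_0(\gP)^{\min}$.

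The identification of the restriction to the cusps with $\rho_g^{\infty}$ follows by continuity: on the dense open $Y'_0(\gP)$ the map is $\rho_g$, which on complex points is right multiplication by $g$; the induced map on connected components of the boundary (indexed by $C'_0(\gP)$, resp.~$C_0(\gP)$, by Theorem~\ref{thm:Y0min}(1)) is therefore forced to be the one induced by the same double-coset operation, which is exactly $\rho_g^{\infty}$. Concretely, one checks on a single boundary component that the universal degenerating object produced by the Tate-object recipe on $\widetilde{Y}'_0(\gP)^{\tor}$ near the cusp $c' = [\utH'_1,\utH'_2,\alpha']$, after applying the quasi-isogenies $\psi_i$ that define $\widetilde{\rho}_g$, has associated combinatorial data $(\utH_1,\utH_2,\alpha)$ with $H_{i,(p)} = H'_{i,(p)}$ and $\eta'_i = \eta_i\circ g$ — i.e.~exactly the recipe defining $\rho_g^{\infty}$ in \S\ref{ss:U0cusps}.

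The substantive step is the formula on completions. Fix $c = \rho_g^{\infty}(c')$ and the compatible splittings $\sigma_i,\sigma'_i$ implicit in Theorem~\ref{thm:Y0min}(2), related by $\sigma_i = \smat{1}{\epsilon_i}{0}{1}\circ\sigma'_i$ with $\epsilon_1 = \epsilon\circ\alpha$, $\epsilon_2 = \alpha\circ\epsilon$ for $\epsilon \in M^*_{(p)}$, where $M = \gd^{-1}I_1^{-1}J_2$. The completion of $Y_0(\gP)^{\min}$ along $Z_c$ is $\Spf(P_c)$ with $q$-expansion parameter attached to the lattice $N^{-1}M$, and likewise for $c'$; the map on completions is a continuous $\CO$-algebra homomorphism sending $q^m \mapsto (\text{unit})\cdot q^m$ since it must be compatible with the torus-embedding structure and with $\rho_g^{\infty}$ on the underlying reduced schemes. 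To pin down the unit I would work upstairs on the toroidal side, where by Theorem~\ref{thm:tY0tor}(2) the completion is a torus embedding for $\Hom(M,\R)_{\ge 0}$ and the Tate object is $(\gd^{-1}I_1\otimes\G_m)/\widetilde q^{\gd^{-1}J_1} \to (\gd^{-1}I_2\otimes\G_m)/\widetilde q^{\gd^{-1}J_2}$. The discrepancy between $\sigma'_i$ and $\sigma_i$ is precisely a change of the periods/trivialization by the "shift" $\epsilon_i$, which in the Fourier/theta expansion of sections multiplies the coefficient $t_m$ (indexed by $m\in N^{-1}M_+$) by a root of unity: writing $\zeta_{N'}$ for a primitive $N'$-th root of unity in $\CO$, the shift by $\epsilon$ contributes the pairing $\langle \epsilon, N'm\rangle$ between $M^*$ and $M$, giving the factor $\zeta_{N'}^{-\epsilon(N'm)}$. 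Assembling these (and using that $\epsilon_1,\epsilon_2$ both come from the single $\epsilon$ via $\alpha$, consistently with replacing $M$ by $M_i$ as in Theorem~\ref{thm:Y0min}(3)) yields exactly the claimed formula $\sum t_m q^m \mapsto \sum \zeta_{N'}^{-\epsilon(N'm)} t_m q^m$. I expect this last bookkeeping — tracking how a change of $\CO_F$-linear splitting of $0\to I_i\to H_i\to J_i\to 0$ acts on the Mumford/Tate construction and hence on $q$-expansion coefficients — to be the main obstacle, essentially a careful rerun of the computation behind \cite[(18)]{theta} and the discussion following it, now carried along the degeneracy maps $\pi_i$; everything else is formal descent and density.
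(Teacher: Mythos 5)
Your outline follows the same route the paper takes: reduce to $U'=U(N')$, $U=U(N)$, pass to toroidal compactifications with compatible (refining) cone decompositions, and compute at the boundary by tracking the degenerating Tate objects. The formal parts (uniqueness via normality and density, existence via the toroidal extension or Lan's functoriality, identification of the boundary map with $\rho_g^\infty$) are fine.

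The gap is in the one step you yourself flag as ``the main obstacle'': you assert that the change of splitting by $\epsilon$ contributes the factor $\zeta_{N'}^{-\epsilon(N'm)}$, but this is stated as a heuristic (``the shift by $\epsilon$ contributes the pairing $\langle\epsilon,N'm\rangle$'') rather than proved. The paper makes this precise by reducing, via the universal property \cite[Thm.~3.4.1(4)]{KWL:spl}, to a concrete claim about a single degenerating point: if $V$ is a complete DVR with fraction field $L$ and $s'\in\widetilde{Y}'_0(\gP)(L)$ is the Tate object built from a parameter $q':(N')^{-1}M'\to L^\times$ (with auxiliary data given by $c'$ and the $\sigma'_i$), then $\widetilde{\rho}_g\circ s'$ is the Tate object of the form (\ref{eqn:Tate}) built from $q^m = \zeta_{N'}^{-\epsilon(N'm)}q'^m$ (with auxiliary data given by $c$ and the $\sigma_i$). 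That claim is then verified by a commutative diagram comparing the prime-to-$p$ level structures on the two Tate objects through the quasi-isogenies $\psi_i$ defining $\rho_g$, using the explicit trivializations $(x,y)\mapsto \widetilde{q}'^{x/N''}(\zeta_{N''}\otimes y)$ of the Tate modules. In particular, the factor $\zeta_{N'}^{-\epsilon(N'm)}$ is not merely a consequence of ``changing the splitting'': one must show that $\rho_g$ really does send this particular $q'$-Tate datum to the $q$-Tate datum with the $\sigma_i$ as splittings, and your proposal does not supply that verification. Also, your intermediate assertion that the map on completions must send $q^m$ to a unit multiple of $q^m$ ``since it must be compatible with the torus-embedding structure'' is not by itself a proof — the boundary chart map is the composite of a torus morphism (here the identity on character lattices, since $M'=M$) with a translation by a torsion point of the torus, and identifying that translation is exactly the content of the Tate-object computation. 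So the skeleton is right, but the lemma is not proved until this computation is actually carried out.
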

Before discussing the proof, we note that the hypotheses on $N$ and $N'$ imply that
$$N'\,\End_{\CO_F}(J_2' \times I_1') \smat{1}{\epsilon}{0}{1}
 \subset N \smat{1}{\epsilon}{0}{1}\End_{\CO_F}(J_2 \times I_1)$$
(as matrices acting via right multiplication), which implies that
$$N'\,\Hom_{\CO_F}(J_2',I_1') \subset N\,\Hom_{\CO_F}(J_2,I_1)\quad\mbox{and}\quad N'\epsilon \in N\,\Hom_{\CO_F}(J_2,I_1).$$
It follows that $N'M \subset NM'$ and $N'\epsilon \in NM^*$, so the formula in the statement does indeed define a map on the power series rings appearing in the statement of Theorem~\ref{thm:Y0min}.  Furthermore we have 
$ \smat{1}{-\epsilon}{0}{1} \Gamma_{\calC',U'} \smat{1}{\epsilon}{0}{1} \subset \Gamma_{\calC,U}$ as subgroups of $\Aut_{\CO_F}(J_2 \times I_1)$ acting via right multiplication, so the formula defines a map on the completions  as described for arbitrary $U$ and $U'$.  (Note also that taking $g=1$ in the proposition describes how the choice of splittings affects our uniformizations of the completions.)

To prove the lemma, we may assume that $U' = U(N')$ and $U = U(N)$.  The assertions then follow from analogous ones for toroidal compactifications, namely that the cone decompositions in their construction may be chosen so that the morphism $\widetilde{\rho}_g$  of $\widetilde{Y}_0(\gP)'$ extends to $\widetilde{Y}_0(\gP)^{\prime\tor} \to \widetilde{Y}_0(\gP)^{\tor}$ with the desired effect on (completions at) components of the complement.  To that end, we may assume that if $c = \rho_g^{\infty}(c')$, then the cone decomposition of $\Hom(M',\R)_{\ge 0} = \Hom(M,\R)_{\ge 0}$ chosen for $c'$ refines the one for $c$.  The assertions then reduce (by \cite[Thm.~3.4.1(4)]{KWL:spl}) to the claim that if $V$ is a complete DVR with fraction field $L$ and $s' \in \widetilde{Y}'_0(\gP)(L)$ is defined by 
$$(\gd^{-1}I'_1 \otimes \G_m)/\widetilde{q}^{\gd^{-1}J'_1} \longrightarrow (\gd^{-1}I'_2 \otimes \G_m)/(\widetilde{q}')^{\gd^{-1}J'_2}$$
for some $q':(N')^{-1}M' \to L^\times$ such that $\val\circ q' \in \Hom(M',\R)_{> 0}$ (with auxiliary data given by $c'$ and the splittings $\sigma_i'$), then $\widetilde{\rho}_g\circ s' = s$ where $s$ is similarly defined by (\ref{eqn:Tate}) with $q:N^{-1}M \to L^\times$ defined by $q^m = \zeta_{N'}^{-\epsilon(N'm)}q'^m$ (and auxiliary data given by $c$ and the $\sigma_i$).  Finally the claim itself is straightforward to check in view of the commutativity of the diagram
$$\xymatrix{(\A_{F,\f}^{(p)})^2
  \ar[r]^-{\sigma_i'\circ\eta_i'}
  \ar[d]^{\cdot g^{-1}} &
  (J_i' \times I_i')\otimes_{\CO_F}\A_{F,\f}^{(p)}
  \ar[r]^-{\sim} 
  \ar[d]^{\cdot\smat{1}{\epsilon_i}{0}{1}} &
  \gd \otimes_{\CO_F} T_i'  \ar[d]^{\wr} \\
  (\A_{F,\f}^{(p)})^2
  \ar[r]^-{\sigma_i\circ\eta_i}&
  (J_i \times I_i)\otimes_{\CO_F}\A_{F,\f}^{(p)}
  \ar[r]^-{\sim} &
  \gd \otimes_{\CO_F} T_i }$$
for $i=1,2$, where $T_i'$ is the prime-to-$p$ adelic Tate module of $(\gd^{-1}I_i' \otimes \overline{L}^\times)/\widetilde{q}^{\gd^{-1}J_i'}$,
the top right horizontal isomorphism is defined by $(x,y) \mapsto \widetilde{q}'^{x/N''}(\zeta_{N''} \otimes y)$ for 
$(x,y) \in (J'_i \times I'_i) \otimes \Z/N''\Z$ and $p \nmid N''$,
$T_i$ and the bottom right isomorphism are defined similarly, and the right vertical isomorphism is induced by the canonical quasi-isogeny $(\gd^{-1}I'_i \otimes \G_m)/\widetilde{q}^{\gd^{-1}J'_i} \longrightarrow (\gd^{-1}I_i \otimes \G_m)/(\widetilde{q}')^{\gd^{-1}J_i}$.

\subsection{The Koecher Principle}{\label{ss:KP}
Consider now the line bundle $\CA_{\vec{k},\vec{m},R}$ on $Y_R$, 
where as usual $\vec{k},\vec{m} \in \Z^{\Theta}$ and $R$ is a Noetherian $\CO$-algebra such that $\chi_{\vec{k}+2\vec{m},R} = 1$ on $\CO_F^\times \cap U$.  For $i=1,2$, we let $\CA^{(i)}_{\vec{k},\vec{m},R}$ denote its pull-back to $Y_0(\gP)_R$ via $\pi_i$
(omitting the subscript $R$ from the morphisms when clear from the context).  We explain how the Koecher Principle applies to describe $j_*\CA^{(i)}_{\vec{k},\vec{m},R}$, where $j$ denotes the open immersion $Y_0(\gP)_R \hookrightarrow Y_0(\gP)_R^{\min}$.

First assume $U = U(N)$ and consider the line bundle $\widetilde{\CA}^{(i)}_{\vec{k},\vec{m},R} = \widetilde{\pi}_i^*\widetilde{\CA}_{\vec{k},\vec{m},R}$ on $\widetilde{Y}_0(\gP)_R$.  Its extension $\widetilde{\CA}^{(i),\tor}_{\vec{k},\vec{m},R} := (\widetilde{\pi}_i^{\tor})^*\widetilde{\CA}^{\tor}_{\vec{k},\vec{m},R}$ to $\widetilde{Y}_0(\gP)^{\tor}$ is then formally canonical, so we may apply \cite[Thm.~4.4.10]{KWL:spl} to  identify $\widetilde{j}_* \widetilde{\CA}^{(i)}_{\vec{k},\vec{m},R}$ with the direct image of $\widetilde{\CA}^{(i),\tor}_{\vec{k},\vec{m},R}$ under the projection $\widetilde{Y}_0(\gP)^{\tor} \to \widetilde{Y}_0(\gP)^{\min}$.  In particular $\widetilde{j}_* \widetilde{\CA}^{(i)}_{\vec{k},\vec{m},R}$ is coherent and its completion along the cusp corresponding to $(\underline{H}_i,\gq_i)$ is described as in \cite[(22)]{theta}, taking $M = \gd^{-1}I_1^{-1}J_2$ and using $I_i$ (resp.~$J_i$) in place of $I$ (resp.~$J$) in the definition of the free rank one $\CO$-module $D_{\vec{k},\vec{m}}$ in \cite[(20)]{theta}.
The analogous statements then follow for $j_*\CA^{(i)}_{\vec{k},\vec{m},R}$ and any sufficiently small $U$,  in the sense that $j_*\CA^{(i)}_{\vec{k},\vec{m},R}$ is coherent and that its completion along each cusp is described as in \cite[Prop.~7.2.1]{theta}, with the evident modifications.  Furthermore, we may
describe the effect on $q$-expansions of the composite morphisms
\begin{equation}\label{eqn:minhecke}
j_* \CA^{(i)}_{\vec{k},\vec{m},R} \longrightarrow
   j'_* \rho_g^*\CA^{(i)}_{\vec{k},\vec{m},R} \longrightarrow 
      j'_*\CA'^{(i)}_{\vec{k},\vec{m},R}\end{equation}
in the setting of Proposition~\ref{lem:hecke},
where the latter map is induced by the quasi-isogeny
$\psi_i:A_i' \to A_i$ in the definition of $\rho_g$.
More precisely, letting $D^{(i)}_{\vec{k},\vec{m}}$
denote the invertible $\CO$-module
$$(I_i^{-1})_\theta^{\otimes k_\theta} 
   \otimes (\gd(I_iJ_i)^{-1})_\theta^{\otimes m_\theta}$$
(in the notation of \cite[(2)]{theta}), we have the following:

\begin{proposition} \label{prop:koecher} 
Suppose that $\vec{k},\vec{m} \in \Z^{\Theta}$, $U$ is a sufficiently small open compact subgroup of $\GL_2(\A_{F,\f})$ containing $U(N)$, and $R$ is a Noetherian $\CO$-algebra such that $\chi_{\vec{k}+2\vec{m},R} = 1$ on $\CO_F^\times \cap U$.  Then for $i=1,2$,
\begin{enumerate}
\item $j_*\CA^{(i)}_{\vec{k},\vec{m},R}$ is a coherent sheaf on $Y_0(\gP)_R^{\min}$;
\item for each $c = [\unH_1,\unH_2,\alpha ] \in C_0(\gP)$, the completion of $j_*\CA^{(i)}_{\vec{k},\vec{m},R}$ along $Z_c$ corresponds to the $P_{R,c}$-module
$Q^{(i)}_{\vec{k},\vec{m},R,c} :=   $
$$\left \{\, \sum_{m \in N^{-1}M_+ \cup \{0\}}  b \otimes r_mq^m \,\left|\, \begin{array}{c}
   r_{\alpha^{-1}\delta m} =  \zeta_N^{-\beta(\alpha^{-1}Nm)}
 \chi_{\vec{m},R} (\alpha)\chi_{\vec{k}+\vec{m},R}(\delta) r_m\\ \mbox{\rm for all $m \in N^{-1}M_+,
 \smallmat\alpha\beta0\delta \in \Gamma_{\calC,U}$} \end{array}
 \,\right.\right\},$$
where $b$ is a basis for $D^{(i)}_{\vec{k},\vec{m}}$ and $P_{R,c} = Q^{(i)}_{\vec{0},\vec{0},R,c} = H^0(\widehat{Y}_0(\gP)^{\min}_{Z_c,R},\CO_{\widehat{Y}_0(\gP)^{\min}_{Z_c,R}} )$ is as in Theorem~\ref{thm:Y0min}(2)
(or more precisely its variant for more general $U$ and $R$,
so in particular $\Gamma_{\calC,U} \cong \Aut(\unH_1,\unH_2,\alpha)$);

\item for $U'$ as above and $g \in \GL_2(\A_{F,\f}^{(p)})$
such that $U' \subset gUg^{-1}$, the morphism (\ref{eqn:minhecke}) corresponds under the isomorphisms in (2) to the map defined by
$$\sum_{m \in N^{-1}M_+ \cup \{0\}}  b \otimes r_mq^m \quad
\mapsto \quad \sum_{m \in N^{-1}M_+ \cup \{0\}}  b \otimes \zeta_{N'}^{-\epsilon(N'm)}r_mq^m $$
where $D'^{(i)}_{\vec{k},\vec{m}} = D^{(i)}_{\vec{k},\vec{m}}$ via the identification $H'_{i,(p)} = H_{i,(p)}$, and $N'$ and $\epsilon$ are as in the statement of Lemma~\ref{lem:hecke}.

\end{enumerate}
\end{proposition}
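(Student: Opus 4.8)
The plan is to deduce all three parts from the corresponding statements over $\widetilde{Y}_0(\gP)_R^{\min}$ in the case $U = U(N)$, following the outline in the discussion preceding the statement, and then to descend and to pass to more general level exactly as in the constructions of \S\ref{ss:Y0min}.

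First, for $U = U(N)$, part (1) and the coherence in (2) follow by applying the Koecher Principle \cite[Thm.~4.4.10]{KWL:spl} to the line bundle $\widetilde{\CA}^{(i),\tor}_{\vec{k},\vec{m},R} = (\widetilde{\pi}_i^{\tor})^*\widetilde{\CA}^{\tor}_{\vec{k},\vec{m},R}$ on $\widetilde{Y}_0(\gP)^{\tor}$. Its hypotheses are met because this sheaf is formally canonical in the sense of \cite[Def.~8.5]{KWL:IMRN}: this is inherited from the formal canonicity of $\widetilde{\CA}^{\tor}_{\vec{k},\vec{m},R}$ (see \cite[(20)]{theta}) via the extension of the degeneracy maps $\widetilde{\pi}_i^{\tor}$ in Theorem~\ref{thm:tY0tor}(3), which in particular identifies the pull-back of $A^{\tor}$ with $A_i^{\tor}$. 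We conclude that $\widetilde{j}_*\widetilde{\CA}^{(i)}_{\vec{k},\vec{m},R}$ is coherent and coincides with the pushforward of $\widetilde{\CA}^{(i),\tor}_{\vec{k},\vec{m},R}$ along $\widetilde{Y}_0(\gP)^{\tor} \to \widetilde{Y}_0(\gP)^{\min}$.

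Second, to compute the completion along $\widetilde{Z}_{\widetilde{c}}$ one uses Theorem~\ref{thm:tY0tor}(2): the completion of $\widetilde{Y}_0(\gP)^{\tor}$ there is $\widehat{S}_0(\gP)/(\CO_F^\times \cap U)^2$, built from the torus embedding attached to $M = \gd^{-1}I_1^{-1}J_2$. On this formal neighbourhood $\widetilde{\CA}^{(i),\tor}_{\vec{k},\vec{m},R}$ is the constant sheaf attached to $D^{(i)}_{\vec{k},\vec{m}}$, by the computation of \cite[(20)--(22)]{theta} carried out with $I_i,J_i$ in place of $I,J$ and using that the relevant Tate object for $\widetilde{\pi}_i$ is $(\gd^{-1}I_i \otimes \G_m)/\widetilde{q}^{\gd^{-1}J_i}$ (Theorem~\ref{thm:tY0tor}(3)). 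Taking $(\CO_F^\times \cap U)^2$-invariant sections of the pushforward over the torus embedding with $\Spec(\CO[N^{-1}M])$ removed then yields the module $Q^{(i)}_{\vec{k},\vec{m},R,c}$: the twisting factor is the product of the action of $\smallmat\alpha\beta0\delta$ on $q^m$, which is $\zeta_N^{-\beta(\alpha^{-1}Nm)}$ as in the $g=1$ case of Lemma~\ref{lem:hecke}, and of its action on $D^{(i)}_{\vec{k},\vec{m}}$, which is $\chi_{\vec{m},R}(\alpha)\chi_{\vec{k}+\vec{m},R}(\delta)$ from the definition of $D^{(i)}_{\vec{k},\vec{m}}$. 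For $U = U(N)$ one then descends under the free action of $\CO_{F,(p),+}^\times/(U\cap \CO_F^\times)^2$, using $\chi_{\vec{k}+2\vec{m},R} = 1$ on $\CO_F^\times \cap U$ for the descent of the line bundle; for general $U \supset U(N)$ one further takes $U/U(N)$-invariants as in \S\ref{ss:Y0min}, so that $P_{R,c}$ and $\Gamma_{\calC,U} \cong \Aut(\unH_1,\unH_2,\alpha)$ take the shapes described there, and $Q^{(i)}_{\vec{k},\vec{m},R,c}$ acquires the stated form.

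Finally, for part (3), the first arrow of (\ref{eqn:minhecke}) is pull-back along $\rho_g$, whose effect on the base rings $P_{R,c} \to P_{R,c'}$ is the substitution $q^m \mapsto \zeta_{N'}^{-\epsilon(N'm)}q^m$ by Lemma~\ref{lem:hecke}; the second arrow, induced by the prime-to-$p$ quasi-isogeny $\psi_i : A_i' \to A_i$, is compatible with the canonical identification $D'^{(i)}_{\vec{k},\vec{m}} = D^{(i)}_{\vec{k},\vec{m}}$ coming from $H'_{i,(p)} = H_{i,(p)}$, hence contributes no further twist on the $b$-factor, and the two combine to the displayed formula. The main obstacle I anticipate is the bookkeeping in the second step: one must match the cocycle $\zeta_N^{-\beta(\alpha^{-1}Nm)}$ and the character $\chi_{\vec{m},R}(\alpha)\chi_{\vec{k}+\vec{m},R}(\delta)$ simultaneously against the action of $\Gamma_{\calC,U} \subset \Aut_{\CO_F}(J_2\times I_1)\otimes\Q$ on the Mumford data and on $D^{(i)}_{\vec{k},\vec{m}}$, keeping track of all the identifications made via $\alpha$, the subtlety being that $\Gamma_{\calC,U}$ is only embedded in $\Aut_{\CO_F}(J_2\times I_1)$ rationally. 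Once the conventions of \cite{theta} are set up consistently for the triple $(\unH_1,\unH_2,\alpha)$ and the two degeneracy maps, parts (1)--(3) are formal consequences of \cite[Thm.~4.4.10]{KWL:spl} and Lemma~\ref{lem:hecke}.
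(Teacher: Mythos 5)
Your proposal is correct and follows essentially the same route as the paper: the coherence and the description of the completions are obtained by applying the Koecher Principle \cite[Thm.~4.4.10]{KWL:spl} to the formally canonical extension $(\widetilde{\pi}_i^{\tor})^*\widetilde{\CA}^{\tor}_{\vec{k},\vec{m},R}$, identifying $\widetilde{j}_*\widetilde{\CA}^{(i)}_{\vec{k},\vec{m},R}$ with the pushforward from $\widetilde{Y}_0(\gP)^{\tor}$, computing along $\widetilde{Z}_{\widetilde{c}}$ via the description in Theorem~\ref{thm:tY0tor} (with $M=\gd^{-1}I_1^{-1}J_2$ and $I_i,J_i$ in place of $I,J$), and then descending and taking invariants for general $U$ as in \S\ref{ss:Y0min}, with part (3) coming from Lemma~\ref{lem:hecke} and the compatibility of $\psi_i$ with $D'^{(i)}_{\vec{k},\vec{m}} = D^{(i)}_{\vec{k},\vec{m}}$. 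Your care with the rational embedding of $\Gamma_{\calC,U}$ in $\Aut_{\CO_F}(J_2\times I_1)$ matches the paper's own caveat, so no changes are needed.
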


\section{Level $U_1(\gP)$}\label{sec:U1}
We now consider a more refined level structure at primes over $p$; more precisely, we let
$$U_1(\gP) = \left\{\left.\,\left(\begin{array}{cc}a&b\\c&d\end{array}\right) \in U_0(\gP) \,\right|\,\mbox{$d_\gp - 1 \in \gp\CO_{F,\gp}$ for all $\gp|\gP$}\,\right\}.$$
We follow \cite{DS1} to construct toroidal and minimal compactifications of $p$-integral models of the resulting Hilbert modular varieties, defined using a moduli problem based on \cite{pappas}, thus diverging from those obtained by the methods of \cite{KWL:spl}.

We assume throughout this section that $U$ is $\gP$-neat in the sense that 
$$U \cap \CO_F^\times = U_1(\gP) \cap \CO_F^\times.$$
Note that this holds for example if $U = U_1(\gn)$ for sufficiently small $\gn$.

\subsection{The model for $Y_1(\gP)$}\label{ss:Y1p}
Maintaining the notation from the preceding sections,  we let
$\psi:A_1 \to A_2$ denote the universal isogeny over $\widetilde{Y}_0(\gP)$, and write $G = \prod_{\gp|\gP} G_\gp$
where $G = \gd\gP\otimes_{\CO_F}\ker(\psi)$ and each $G_\gp$ is a Raynaud $(\CO_F/\gp)$-vector space scheme.
Consider also the dual isogeny $\psi^\vee:A_2^\vee \to A_1^\vee$ over $\widetilde{Y}_0(\gP)$, and identify its
kernel with $\gd\gP\otimes_{\CO_F} G^\vee$, where $G^\vee = \prod_{\gp|\gP} G_\gp^\vee$ is the Cartier dual of $G$.

The scheme $\widetilde{Y}_1(\gP)$ is then a certain closed subscheme of $G$, finite and flat over $\widetilde{Y}_0(\gP)$
of degree $\#(\CO_F/\gP)^\times$ (see \cite[\S5.1]{KS}).  As usual, we let $Y_1(\gP)$
denote its quotient by the action of $\CO_{F,(p),+}^\times$, so that $Y_1(\gP)$ is a model for
the Hilbert modular variety of level $U_1(\gP)$ and the morphism $Y_1(\gP) \to Y_0(\gP)$ corresponds
to the usual projection on complex points.

The neatness hypothesis implies that $Y_1(\gP) \to Y_0(\gP)$ is again finite and flat, and hence that $Y_1(\gP)$ is Cohen--Macaulay over $\CO$.  However our model for $Y_1(\gP)$ is not normal (unless $\gP = \CO_F$), and does not preserve the symmetry between the projections $\pi_1$ and $\pi_2$.  In particular the automorphism $w_\gP$ of $Y_0(\gP)$ does not lift to an automorphism of $Y_1(\gP)$, as follows for example from the fact that if the fibre over
$s \in Y_0(\gP)(\Fpbar)$ is \'etale, then that of $w_\gP(s)$ is 
connected.
	
\subsection{Cusps and clasps}
Just as for level $U_0(\gP)$, we define the set of {\em cusps} of level $U_1(\gP)$ to be
$$C_1(\gP) := B(F)_+\backslash \GL_2(\A_{F,\f}^{(p)})/U_1(\gP).$$
We thus have a bijection between $C_1(\gP)$ and the set of isomorphism classes of pairs $(\underline{H},\Psi)$, where $\underline{H}$ corresponds to an element of $C = C_U$ and $\Psi \in H/\gP H$ is such that $\Ann_{\CO_F} \Psi = \gP$.  (The bijection is induced by $g \mapsto (\underline{H},\Psi)$, where
$H =  \widehat{\CO}_F^2 g^{-1} \cap F^2$, $I = H \cap (0\times F)$, $\lambda = \det$, $\eta$ is induced by right-multiplication by $g^{-1}$ and $\Psi$ is the image of $(0,1)\cdot g^{-1}$.)

As before, the set $C_1(\gP)$ will parametrize the complement of $Y_1(\gP)_K$ in its minimal compactification; now however collections of cusps coalesce in connected components of the complement on the integral model.
With this in mind, we define a {\em clasp} of level $U_1(\gP)$ to be an isomorphism class of pairs $(\underline{H},\Xi)$, where 
$\underline{H}$ is as in the description of $C$ and $\Xi \in J/\gP J$ (with $J = H/I$ and the obvious notion of isomorphism).
We let $C_{1/2}(\gP)$ denote the set of clasps of level $U_1(\gP)$, so we have the natural degeneracy maps
\begin{equation} \label{eqn:clasps} \begin{array}{ccccccc}
C_1(\gP) &\longrightarrow & C_{1/2}(\gP) & \longrightarrow &
C_0(\gP) & \stackrel{\pi_1^\infty}{\longrightarrow} & C \\
  
[\underline{H},\Psi]& \mapsto &[\underline{H},\Xi] & \mapsto &
([\underline{H}],\gq) & \mapsto& \underline{H},\end{array}\end{equation}
where $\Xi$ is the image of $\Psi$, $\gq = \Ann_{\CO_F} \Xi$, and as usual $[\cdot]$ denotes the associated isomorphism class.

\subsection{Toroidal compactification}\label{ss:Y1ptor}

Once again we initially assume $U = U(N)$ for some integer $N$ (prime to $p$).  Let $\widetilde{Y}_0(\gP)^\tor$ be a toroidal compactification of $\widetilde{Y}_0(\gP)$, and write $\psi^\tor:A_1^\tor \to A_2^\tor$ for the extension of the universal isogeny $\psi$.

Fix for the moment a prime $\gp|\gP$ and write $\widetilde{C}_0(\gP) = \widetilde{C}_0(\gP)_1 \coprod \widetilde{C}_0(\gP)_2$ where $\widetilde{C}_0(\gP)_1$ 
(resp.~$\widetilde{C}_0(\gP)_2$) is the set of cusps
such that $\alpha(J_1) \subset \gp J_2$ (resp.~$\alpha(I_1) \subset \gp I_2$).  For $i=1,2$, we let $Z_i$ denote the preimage in $\widetilde{Y}_0(\gP)^\tor$ of the closed subscheme of 
$\widetilde{Y}_0(\gP)^{\min}$ corresponding to $\widetilde{C}_0(\gP)_i$, and
define $\widetilde{Y}_0(\gP)^\tor_i$ 
to be the complement 
in $\widetilde{Y}_0(\gP)^\tor$ of $Z_i$, so that
$$\widetilde{Y}_0(\gP)^\tor =
 \widetilde{Y}_0(\gP)^\tor_1 \cup \widetilde{Y}_0(\gP)^\tor_2
\quad\mbox{and}\quad
\widetilde{Y}_0(\gP) =
 \widetilde{Y}_0(\gP)^\tor_1 \cap \widetilde{Y}_0(\gP)^\tor_2.$$

We then have that $G_\gp$ extends uniquely over 
$\widetilde{Y}_0(\gP)^\tor_1$ to a finite flat subgroup scheme of $\gd\gP\otimes_{\CO_F}\ker(\psi^\tor)$, its completion along $Z_2$ being the subgroup corresponding to the image of 
$$(I_1/\gp I_1) \otimes \mu_p
 = ((I_1/\gP I_1) \otimes \mu_p)_\gp  \,\,
\stackrel{p}{\hookrightarrow} \,\,(\gP I_1 \otimes \G_m)/\widetilde{q}^{\gP J_1}.$$
Similarly $\gd\gP \otimes_{\CO_F} G_\gp^\vee$ extends uniquely over $\widetilde{Y}_0(\gP)^{\tor}_2$ to a finite flat subgroup scheme of the kernel of the extension of $\psi^\vee$, its completion along $Z_1$ being the subgroup corresponding to the image of $(\gp J_2)^{-1}/J_2^{-1} \otimes \mu_p $ in $(J_2^{-1} \otimes \G_m)/\widetilde{q}^{I_2^{-1}}$.  Taking Cartier duals, it follows that $G_\gp$ extends to a finite flat group scheme 
over $\widetilde{Y}_0(\gP)^\tor_2$ as well, and gluing to its extension over $\widetilde{Y}_0(\gP)^\tor_1$ thus yields an 
$(\CO_F/\gp)$-vector space scheme $G_\gp^\tor$ over 
$\widetilde{Y}_0(\gP)^\tor$.  We then let $G^\tor = \prod_{\gp|\gP} G_\gp^\tor$, and define $\widetilde{Y}_1(\gP)^\tor$ as the extension of $\widetilde{Y}_1(\gP)$ to a closed subscheme of $G^\tor$, finite and flat over $\widetilde{Y}_0(\gP)^\tor$.

\subsection{Minimal compactification}\label{ss:Y1pmin}
We define the minimal compactification of $\widetilde{Y}_1(\gP)$ as
$$\widetilde{Y}_1(\gP)^{\min} = 
\SPEC(f_*\CO_{\widetilde{Y}_1(\gP)^{\tor}}),$$
where $f$ is the composite
$\widetilde{Y}_1(\gP)^\tor \to \widetilde{Y}_0(\gP)^\tor \to \widetilde{Y}_0(\gp)^{\min}$. 
As usual $\widetilde{Y}_1(\gP)^{\min}$ is independent of the choice of cone decomposition in its definition, the action of
$\CO_{F,(p),+}^\times$ on $\widetilde{Y}_1(\gP)$ extends to it uniquely, and we let $Y_1(\gP)^{\min}$ denote the quotient by this action.  More generally, for any sufficiently small level $U$ ($\gP$-neat and prime to $p$) we define $Y_1(\gP)^{\min}$ as the quotient of $Y'_1(\gP)^{\min}$ by the unique extension of the action of $U/U'$ on $Y'_1(\gP)$, where $Y'_1(\gp)$ is defined using $U' = U(N) \subset U$ for suitable $N$ (of which the resulting scheme $Y_1(\gP)^{\min}$ is independent).

\begin{lemma} \label{lem:flat} For sufficiently small $U$, the morphism $h:Y_1(\gP)^{\min} \to Y_0(\gP)^{\min}$ is finite and flat.
\end{lemma}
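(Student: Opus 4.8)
The plan is to establish the statement first for $U = U(N)$, using the explicit constructions of \S\ref{ss:Y1ptor}--\S\ref{ss:Y1pmin}, and then obtain the general case by descent. Assume $U = U(N)$, and write $g\colon \widetilde{Y}_1(\gP)^{\tor}\to\widetilde{Y}_0(\gP)^{\tor}$ for the finite flat morphism of degree $\#(\CO_F/\gP)^\times$ of \S\ref{ss:Y1ptor}, and $\pi\colon \widetilde{Y}_0(\gP)^{\tor}\to\widetilde{Y}_0(\gP)^{\min}$, $\pi_1\colon \widetilde{Y}_1(\gP)^{\tor}\to\widetilde{Y}_1(\gP)^{\min}$ for the two contractions, so that $f := \pi\circ g = \widetilde{h}\circ\pi_1$. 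First I would note that $f$ is projective (a finite morphism followed by the projective $\pi$), so $f_*\CO_{\widetilde{Y}_1(\gP)^{\tor}}$ is a coherent sheaf of $\CO_{\widetilde{Y}_0(\gP)^{\min}}$-algebras, whence $\widetilde{h}\colon\widetilde{Y}_1(\gP)^{\min}=\SPEC(f_*\CO_{\widetilde{Y}_1(\gP)^{\tor}})\to\widetilde{Y}_0(\gP)^{\min}$ is finite. Over the open dense $\widetilde{Y}_0(\gP)\subset\widetilde{Y}_0(\gP)^{\min}$ the morphism $\widetilde{h}$ restricts to $\widetilde{Y}_1(\gP)\to\widetilde{Y}_0(\gP)$, which is finite flat of degree $\#(\CO_F/\gP)^\times$ by the neatness hypothesis; so the only issue is flatness along the cusps.

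To deal with that I would exploit normality: each connected component of $\widetilde{Y}_0(\gP)^{\min}$ is integral and normal, and over such a scheme a coherent sheaf of generic rank $r$ is locally free of rank $r$ as soon as $\dim_{\kappa(x)}(\mathcal{F}\otimes\kappa(x))=r$ at every point $x$ (a finite module $M$ of generic rank $r$ over a normal Noetherian local domain $A$ is free once $\dim_{\kappa(\gm)}(M\otimes\kappa(\gm))=r$, since then $A^r\twoheadrightarrow M$ has torsion-free, hence zero, kernel). Since this holds with $r = \#(\CO_F/\gP)^\times$ on $\widetilde{Y}_0(\gP)$, it suffices to check that the fibre of $\widetilde{h}$ over each point of each boundary component $\widetilde{Z}_{\widetilde{c}}$ has length exactly $\#(\CO_F/\gP)^\times$, and for this I would pass to the completion along $\widetilde{Z}_{\widetilde{c}}$.

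This is the heart of the argument. By Theorem~\ref{thm:tY0tor}(2) the completion of $\widetilde{Y}_0(\gP)^{\tor}$ along $\widetilde{Z}_{\widetilde{c}}$ is the torus-embedding completion $\widehat{S}_0(\gP)/(\CO_F^\times\cap U)^2$, and by the analysis in \S\ref{ss:Y1ptor}, near $\widetilde{Z}_{\widetilde{c}}$ the $(\CO_F/\gp)$-vector space scheme $G_\gp^{\tor}$ is, for each $\gp\mid\gP$, isomorphic either to $(I_1/\gp I_1)\otimes\mu_p$ (on the part of the boundary where $\alpha(I_1)\subset\gp I_2$) or, by taking Cartier duals, to the corresponding constant étale $(\CO_F/\gp)$-vector space scheme (where $\alpha(J_1)\subset\gp J_2$); in either case $G_\gp^{\tor}$ is the pullback of a fixed finite flat group scheme over $\CO$. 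Consequently the closed subscheme $\widetilde{Y}_1(\gP)^{\tor}\subset G^{\tor}$ defined by the Raynaud generator conditions is, along $\widetilde{Z}_{\widetilde{c}}$, the base change to $\widehat{S}_0(\gP)/(\CO_F^\times\cap U)^2$ of a fixed \emph{finite free} $\CO$-algebra $C$ of rank $\prod_{\gp\mid\gP}\#(\CO_F/\gp)^\times=\#(\CO_F/\gP)^\times$ --- namely the generator subscheme of $\prod_{\gp}G_\gp^{\tor}$ over $\CO$, which is finite free of this rank whether the factors are of multiplicative or of étale type. Since the factor $C$ does not involve the torus-embedding variables, it survives the contraction: the completion of $\widetilde{Y}_1(\gP)^{\min}$ along the preimage of $\widetilde{Z}_{\widetilde{c}}$ is $\Spec C\times_\CO\widehat{\CO}_{\widetilde{Y}_0(\gP)^{\min},\widetilde{Z}_{\widetilde{c}}}$, finite free of rank $\#(\CO_F/\gP)^\times$ over $\widehat{\CO}_{\widetilde{Y}_0(\gP)^{\min},\widetilde{Z}_{\widetilde{c}}}$. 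In particular the fibres over boundary points have the required length, so $\widetilde{h}$ is flat, hence finite flat.

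Finally, for general (sufficiently small, $\gP$-neat) $U$, I would choose $N$ with $U(N)\subset U$ and recall that $Y_1(\gP)^{\min}$ and $Y_0(\gP)^{\min}$ are obtained from the $U(N)$-level schemes $\widetilde{Y}_1(\gP)^{\min}$ and $\widetilde{Y}_0(\gP)^{\min}$ by the compatible, free, properly discontinuous actions of $\CO_{F,(p),+}^\times$ and of $U/U(N)$; the resulting quotient morphisms $q_1,q_0$ are faithfully flat, and $\widetilde{h}$ descends to $h$ with $q_0\circ\widetilde{h}=h\circ q_1$. Then $h\circ q_1$ is finite flat, and $q_1$ is faithfully flat and surjective, so $h$ is quasi-finite (as $q_1$ maps a finite fibre of $h\circ q_1$ onto each fibre of $h$) and proper (both sides being proper over $\CO$), hence finite; and $h$ is flat by faithfully flat descent of flatness along $q_1$. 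The main obstacle is the boundary computation of the third paragraph --- pinning down that the level-$U_1$ structure on the degenerating semi-abelian scheme is carried by a group scheme pulled back from $\CO$, of multiplicative type on one part of the boundary and of étale type on the other, so that its full-set-of-sections scheme remains finite locally free through the passage from the toroidal to the minimal compactification; granting this, flatness away from the cusps is exactly the neatness hypothesis and everything else is formal.
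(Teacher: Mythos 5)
Your argument for $U=U(N)$ follows the same route as the paper's: reduce flatness of $\widetilde h$ to flatness of its completions along the boundary, use the description of $G_\gp^{\tor}$ near $\widetilde Z_{\widetilde c}$ from \S\ref{ss:Y1ptor} to exhibit the completion of $\widetilde Y_1(\gP)^{\tor}$ along the preimage of $\widetilde Z_{\widetilde c}$ as the product of the completion of $\widetilde Y_0(\gP)^{\tor}$ with a fixed finite flat $\CO$-algebra $T=\prod_{\gp\mid\gP}T_\gp$ (your $C$), and then pass from ``tor'' to ``min.'' (You do that last step via integrality of the base plus a fibre-dimension count; the paper invokes the Theorem on Formal Functions; both are fine.) The problem is the final descent step.

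You assert that the quotient maps $q_i:\widetilde Y'_i(\gP)^{\min}\to Y_i(\gP)^{\min}$ are faithfully flat. They are not: the actions of $\CO_{F,(p),+}^\times$ and $U/U'$ on the minimal compactifications have nontrivial stabilizers at the cusps. Concretely, the completed local ring of $Y_i(\gP)^{\min}$ at a cusp is the ring of $\Gamma_{\calC,U}$-invariants of a $(d+1)$-dimensional complete local ring (Theorem~\ref{thm:Y0min}(2) and the discussion following it), and $\Gamma_{\calC,U}$ fixes the closed point; such a quotient by a finite group with a fixed point is a ramified cover, not flat (compare $k[[x^2,xy,y^2]]\subset k[[x,y]]$). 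So ``faithfully flat descent of flatness along $q_1$'' is not available, and this gap already appears at level $U=U(N)$, since $Y_i(\gP)^{\min}$ is the quotient of $\widetilde Y_i(\gP)^{\min}$ by $\CO_{F,(p),+}^\times$.

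This is exactly the difficulty the paper's proof is engineered to circumvent. Rather than trying to descend \emph{flatness}, it descends the product \emph{isomorphism} $\gY_1\cong\gY_0\times_\CO T$ itself, which requires that the stabilizer $\Gamma_{\calC,U}$ act trivially on the factor $T$. That is arranged by the hypothesis $U\subset U_1^1(\gn)$ with $\gn$ such that units $\equiv 1\bmod\gn$ are $\equiv 1\bmod\gP$: then every $\smat{\alpha}{\beta}{0}{\delta}\in\Gamma_{\calC,U}$ satisfies $\alpha\equiv\delta\equiv 1\bmod\gP$, hence acts trivially on each $T_\gp$ (which parametrizes generators of a Raynaud $(\CO_F/\gp)$-vector space scheme on which $\Gamma_{\calC,U}$ acts through reduction mod $\gp$). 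The product decomposition therefore passes to the quotient, giving flatness of $h$ at the cusps directly. Without such a constraint on $U$, the completion isomorphism at level $U(N)$ need not be equivariant, and flatness of $h$ at a cusp is not automatic; so ``for sufficiently small $U$'' in the lemma has to be read as ``for $U$ in a cofinal family,'' not for every sufficiently small $\gP$-neat $U$.
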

\begin{proof} Since the morphism is finite and its restriction to $Y_1(\gP)$ is flat, it suffices to prove that its completion at each cusp of $Y_0(\gP)^{\min}$ is flat.  To that end, fix a cusp $c \in C_0(\gP)$, corresponding to the isomorphism class of $(\underline{H}_1,\underline{H}_2,\alpha)$, and write $\gP = \gq_1\gq_2$ as above, so $\gp|\gq_1$ (resp.~$\gp|\gq_2$) if and only if $\alpha(J_1) \subset \gp J_2$ (resp.~$\alpha(I_1) \subset \gp I_2$).  Suppose for example that
$$U \subset U_1^1(\gn) =
 \left\{\left.\,\left(\begin{array}{cc}a&b\\c&d\end{array}\right) \in \GL_2(\widehat{\CO}_F) \,\right|\, a \equiv d \equiv 1,\, c \equiv 0 \bmod \gn\widehat{\CO}_F\,\right\}$$
for some $\gn$ (prime to $p$) such that if $\mu \in \CO_F^\times$ and $\mu \equiv 1 \bmod \gn$, then $\mu \equiv 1 \bmod \gP$.  Note in particular that $U$ is $\gP$-neat, and hence so is $U' := U(N)$ for any $N$ such that $U(N) \subset U$. 
For $i=0,1$, let $\gY_i$ denote the completion of $\widetilde{Y}_i'(\gP)^\tor$ along the preimage of $\widetilde{c}$, where 
$\widetilde{c}\in \widetilde{C}'_0(\gP)$ is any cusp lying over $c$.  The construction of $\widetilde{Y}'_1(\gP)^{\tor}$ and the condition on $N$ then give an isomorphism 
\begin{equation} \label{eqn:complete} \gY_1 \cong \gY_0 \times_{\CO} T, \end{equation}
where $T = \prod_{\gp|\gP} T_\gp$ and $T_\gp$ is the finite flat $\CO$-algebra representing generators of the \'etale (resp.~multiplicative) $(\CO_F/\gp)$-vector space scheme
$$J_1/\gp J_1
\qquad \mbox{(resp. $\mu_p \otimes (I_1/\gp I_1) $\,)}$$
if $\gp|\gq_1$ (resp.~$\gp|\gq_2$).  It then follows from the Theorem on Formal Functions that (\ref{eqn:complete}) holds with ``tor'' replaced by ``min'' in the definition of $\gY_i$ for $i = 0,1$.  Furthermore the condition on $U$ implies that if
$$\gamma = \left(\begin{array}{cc}\alpha & \beta \\ 0 & \delta \end{array}\right) \in \Aut_{\CO_F}(J_2 \times I_1) \cap gUg^{-1}$$
for some $\widehat{\CO}_F$-linear isomorphism $g:\widehat{\CO}_F^2 \cong \widehat{J}_2 \times \widehat{I}_1$ (where the actions are by right multiplication), then $\alpha \equiv \delta \equiv 1 \bmod \gP$.  The isomorphism (\ref{eqn:complete}) is therefore compatible with the action of the stabilizer of $\widetilde{c}$ in $\CO_{F,(p),+}^\times \times (U/U')$, so it holds with $\widetilde{Y}'_i(\gP)^{\min}$ replaced by its quotient $Y_i(\gP)^{\min}$ for $i=0,1$.
\end{proof}

\subsection{Completions at clasps}
The proof of Lemma~\ref{lem:flat} also yields a description of the completion of $Y_1(\gP)^{\min}$ along the complement of $Y_1(\gP)$.  Maintaining the notation there, note that 
the hypothesis on $U$ ensures that every
automorphism of $(\underline{H}_1,\underline{H}_2,\alpha)$
fixes $H_2/\alpha(H_1)$ pointwise. The proof of the lemma shows that the  connected components of the fibre in $Y_1(\gP)^{\min}$ over $c \in C_0(\gP)$ correspond to generators of $J_1/\gq_1J_1$, hence to isomorphism classes of pairs
$(\underline{H}_1,\Xi)$ such that $\Ann_{\CO_F}(\Xi) = \gq_1$,
i.e., elements of $C_{1/2}(\gP)$ lying over $c$.
We thus obtain a bijection between $C_{1/2}(\gP)$ and the set
of connected components of the complement of $Y_1(\gP)$ in $Y_1(\gP)^{\min}$.

In particular for each cusp $[\underline{H}] = [H,I,[\lambda],[\eta]] \in Y^{\min}$ (for such $U$), the connected components of its preimage in $Y_1(\gP)^{\min}$ under the composite
$$\rho = \pi_1 \circ h: \,Y_1(\gP)^{\min} \longrightarrow Y_0(\gP)^{\min} {\longrightarrow} Y^{\min}$$
are in bijection with 
$J/\gP J$.  Furthermore the completion of $Y_1(\gP)^{\min}$ along the component corresponding to 
an element $\Xi \in J/\gP J$ has the same description as the completion of $Y^{\min}$ at $[\underline{H}]$, but with $M = \gd^{-1}I^{-1}J$ replaced by $\gq^{-1}M$ and
$\CO$ replaced by the finite flat local $\CO$-algebra $T_{\gor}$ representing  generators of $\mu_p \otimes (I/\gor I)$, where 
$\gq = \Ann_{\CO_F}\Xi$ and $\gP = \gq\gor$.
For example if $U = U(\gn)$ for some $\gn$
as in the proof of Lemma~\ref{lem:flat}, then the completion is isomorphic\footnote{Recall that the isomorphism depends on a choice of splitting of $0 \to I \to H \to J \to 0$, as made precise by Lemma~\ref{lem:hecke}.} to $\Spf(P)$ where
$$P = \left \{\, \sum_{m \in (\gq^{-1}\gn^{-1}M)_+ \cup \{0\}}  t_mq^m \,
\left|\, \begin{array}{c} \mbox{$t_{\nu m} = t_m \in T_{\gor}$ for all
$m \in (\gq^{-1}M)_+ \cup \{0\}$,} \\
 \nu \in \CO_{F,+}^\times,\,\, \nu \equiv 1 \bmod \gn\end{array}\,\right.\right\}.$$
We note also that the action of $U_0(\gP)/U_1(\gP) \cong (\CO_F/\gP)^\times$ on $Y_1(\gP)$ extends to $Y_1(\gP)^{\min}$ (with the obvious description on the completion at the fibre over a cusp of $Y^{\min}$), and we may identify $Y_0(\gP)^{\min}$ with the quotient. For completeness, we consider more general $U$ below, 
but first we record the following immediate consequence of Lemma~\ref{lem:flat}:
\begin{corollary} \label{cor:flat}
For sufficiently small $U$, the morphism $\rho = \pi_1\circ h$ is flat in a neighborhood of each component of the complement of $Y_1(\gP)$ in $Y_1(\gP)^{\min}$ corresponding to a clasp of the form $[\underline{H},0]$.  Similarly the composite $\pi_2 \circ h$ is flat, and in fact \'etale, in a neighborhood of each component corresponding to a clasp $[\underline{H},\Xi]$ such that $\CO_F\cdot\Xi = J/\gP J$.
\end{corollary}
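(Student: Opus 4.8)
The plan is to deduce Corollary~\ref{cor:flat} directly from Lemma~\ref{lem:flat} together with the explicit description of the completions at clasps obtained in the discussion preceding the statement. First I would reduce to the case of a sufficiently small $U$ for which Lemma~\ref{lem:flat} applies (e.g. $U = U(\gn)$ as in the proof of that lemma), since flatness and \'etaleness in a neighborhood of a given component are local and insensitive to passing to a smaller level (the quotient maps being finite flat in such neighborhoods by the same neatness argument). So fix such a $U$ and a clasp $[\underline{H},\Xi] \in C_{1/2}(\gP)$, writing $\gq = \Ann_{\CO_F}\Xi$ and $\gP = \gq\gor$.

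For the first assertion, suppose $\Xi = 0$, so $\gq = \CO_F$ and $\gor = \gP$. By the description above, the completion of $Y_1(\gP)^{\min}$ along this component is $\Spf(P)$ with $M$ unchanged ($\gq^{-1}M = M$) and $\CO$ replaced by $T_{\gP}$, which by construction is finite and \emph{flat} over $\CO$. On the other hand the completion of $Y^{\min}$ at the corresponding cusp $[\underline{H}]$ is $\Spf(P_0)$ with the same indexing monoid but coefficients in $\CO$; the morphism induced by $\rho$ is, after the identifications, base change along $\CO \to T_{\gP}$ (up to the action of the relevant stabilizer group, which by the neatness hypothesis on $U$ acts trivially on the torsion data, so is compatible with this base change). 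Since $T_{\gP}$ is flat over $\CO$ and the coefficient rings are otherwise identical, the map on completed local rings is flat, hence $\rho$ is flat in a neighborhood of this component. For the second assertion, when $\gor = \Ann_{\CO_F}(\Xi \bmod \text{image of } \CO_F\cdot\Xi)$ is trivial, i.e. $\CO_F\cdot\Xi = J/\gP J$, one has instead $\gq = \gP$, $\gor = \CO_F$, so $T_{\gor} = \CO$ and $M$ is replaced by $\gP^{-1}M$; the completion of $Y_1(\gP)^{\min}$ along the component then has coefficient ring $\CO$ and the composite $\pi_2 \circ h$ corresponds, via Theorem~\ref{thm:Y0min}(3) (with $M_2 = \gq_2 M = \gP M$ and the $\alpha$-twisted identification), to an isomorphism of local rings — giving \'etaleness, a fortiori flatness.

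I expect the main obstacle to be bookkeeping rather than conceptual: namely tracking precisely which monoid ($N^{-1}M$ versus $N^{-1}\gq^{-1}M$ versus $N^{-1}\gP M$) and which coefficient ring ($\CO$ versus $T_{\gor}$) appears on each side, and checking that the group actions $\Gamma_{\calC,U}$ (equivalently $\Aut(\underline{H}_1,\underline{H}_2,\alpha)$, or $\Aut(\underline{H})$ for the cusp of $Y^{\min}$) are genuinely compatible with the base change $\CO \to T_{\gor}$ — this is exactly where the $\gP$-neatness of $U$ is used, as in the final paragraph of the proof of Lemma~\ref{lem:flat}, to ensure every relevant automorphism acts trivially on $H_2/\alpha(H_1)$ and hence on the torsion generator parametrized by $T_{\gor}$. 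Once these identifications are set up carefully, flatness is immediate from flatness of $T_{\gor}/\CO$, and the \'etale claim in the second case follows because the completed map is an isomorphism. I would also remark that, just as in the proof of Lemma~\ref{lem:flat}, the Theorem on Formal Functions is what licenses replacing ``$\tor$'' by ``$\min$'' in passing from the toroidal description to the statement about $Y_1(\gP)^{\min}$.
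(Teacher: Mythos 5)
Your route is the one the paper intends: Corollary~\ref{cor:flat} is recorded there as an immediate consequence of Lemma~\ref{lem:flat} — more precisely of the local product description (\ref{eqn:complete}) established in its proof, which gives the completions along the clasp components — combined with Theorem~\ref{thm:Y0min}(3), and your treatment of a clasp $[\underline{H},0]$ (completed map equal to base change along $\CO \to T_{\gP}$ with the monoid unchanged, composed with the \'etale $\pi_1$ at the cusp $([\underline{H}],\gq_1)$ with $\gq_1 = \CO_F$) is exactly that argument, including the correct appeal to $\gP$-neatness for compatibility of the group actions.

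There is, however, a slip in your second assertion, and it sits precisely where \'etaleness (rather than mere flatness) is decided. For a clasp $[\underline{H},\Xi]$ with $\CO_F\cdot\Xi = J/\gP J$ you correctly record $\gq = \Ann_{\CO_F}(\Xi) = \gP$ and $\gor = \CO_F$; but recall that $\gq = \gq_1$ and $\gor = \gq_2$ for the underlying cusp $c \in C_0(\gP)$, so here $\gq_2 = \CO_F$. Theorem~\ref{thm:Y0min}(3) then identifies $M_2 = \gd^{-1}I_2^{-1}J_2$ with $\gq_2\,\gd^{-1}I_1^{-1}J_2 = \gd^{-1}I_1^{-1}J_2$ via $\alpha$, i.e.\ the completion of $\pi_2$ at $Z_c$ is an isomorphism; together with $T_{\gor} = \CO$, which makes $h$ an isomorphism on completions along this component, this is what yields \'etaleness of $\pi_2\circ h$. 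Your parenthetical ``$M_2 = \gq_2 M = \gP M$'' asserts the opposite ($\gq_2 = \gP$), contradicting both your own computation $\gor = \CO_F$ and the conclusion you draw from it: if the completed map of $\pi_2$ were the inclusion of the power series indexed by $\gP M \subset M$, it would be finite flat of degree $\Nm(\gP)$ but not an isomorphism, hence not \'etale (that situation is the one occurring at clasps $[\underline{H},0]$, where only flatness is claimed). With that identification corrected, the proposal agrees with the paper's proof and is complete.
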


Suppose now only that $U$ is $\gP$-neat and sufficiently small in the usual sense (as in \cite[\S2.2]{theta}).  Since $Y_1(\gP)^{\min}$ is the quotient of $ Y'_1(\gP)^{\min}$ by the action of $U/U'$ (where $U' = U(N)$), we see that the action of $U_0(\gP)/U_1(\gP) \cong (\CO_F/\gP)^\times$ on $Y_1(\gP)$ extends to $Y_1(\gP)^{\min}$, with $Y_0(\gP)^{\min}$ as the quotient. 
Furthermore taking $U$-invariants yields the following description of the completion of $Y_1(\gP)^{\min}$ along the complement of $Y_1(\gP)$ (as in \cite[\S7.2]{theta}):
\begin{proposition} \label{prop:Y1pcomp} Suppose that $U$ is $\gP$-neat, $U(N) \subset U$ and $\zeta_{pN} \in \CO$.
\begin{enumerate}
\item There is a bijection between $C_{1/2}(\gP)$ (resp.~$C_1(\gP)$) and the set of connected components of the complement of $Y_1(\gP)$ in $Y_1(\gP)^{\min}$ (resp.~$Y_1(\gP)_K$ in $Y_1(\gP)_K^{\min}$), under which (\ref{eqn:clasps}) is compatible (in the obvious sense) with the morphisms
$$Y_1(\gP)_K^{\min} \,\,\longrightarrow \,\, Y_1(\gP)^{\min}\,\, \longrightarrow\,\, Y_0(\gP)^{\min} \,\,  \stackrel{\pi_1}{\longrightarrow}\,\, Y^{\min} .$$
\item Let $[\unH,\Xi] \in C_{1/2}(\gP)$,
 $\gq = \Ann_{\CO_F}(\Xi)$, $\gor = \gq^{-1}\gP$, and
$$\Gamma = \left\{\,\left. \smat{\alpha}{\beta}{0}{\delta} \in \Gamma_{\calC,U} \,\right|\,  \alpha \equiv 1\bmod \gq,\,\, \beta \in \gq J^{-1} I\,\right\}$$
(in the notation following \cite[(24)]{theta} for any 
splitting
$H \isoto J \times I$ and $\calC  = [H,I,[\lambda],[\eta']] \in C_{U_0(N)}$ lying over $[\unH]$, so $\Gamma \cong \Aut(\unH,\Xi)$),
and write $t = \sum t^\xi$ for $t \in T_\gor = \bigoplus T_{\gor}^{\xi}$, where the decomposition is over characters $\xi:(\CO_F/\gor)^\times \to \CO^\times$ (so each $T_{\gor}^{\xi}$ is a free $\CO$-module of rank one).
The completion of $Y_1(\gP)^{\min}$ along the component corresponding to $[\unH,\Xi]$ is isomorphic to $\Spf(P)$, where $P = $
$$\left \{\, \sum_{m \in (\gq^{-1}N^{-1}M)_+ \cup \{0\}}\!\!\!\! \!\!\!\!\!\! t_mq^m \,
\left|\, \begin{array}{c} 
\mbox{$t_{\alpha^{-1}\delta m}^{\xi} = \zeta_N^{-\beta(\alpha^{-1} Nm)} \xi(\delta) t_m^{\xi}$ for all $\smat{\alpha}{\beta}{0}{\delta} \in \Gamma$,}
\\
m \in (\gq^{-1}N^{-1}M)_+ \cup \{0\},\,\,
\xi: (\CO_F/\gor)^\times \to \CO^\times\end{array}\,\right.\right\}$$
(as usual letting $M = \gd^{-1}I^{-1}J$ and viewing
$\beta: \gq^{-1} M \to \Z$ via the canonical isomorphism
$\gq J^{-1}I \isoto \Hom(\gq^{-1}M,\Z)$).
\item For $U'$ as above and $g \in \GL_2(\A_{F,\f}^{(p)})$
such that $U' \subset gUg^{-1}$, the morphism $\rho_g:Y'_1(\gP) \to Y_1(\gP)$ extends uniquely to a morphism $Y'_1(\gP)^{\min} \to Y_1(\gP)^{\min}$ over $Y'^{\min} \to Y^{\min}$.  
The extension is compatible in the obvious sense with the map 
$C'_{1/2}(\gP) \to C_{1/2}(\gP)$ induced by right multiplication by $g$, and the resulting map on completions is given by the same formula as in Lemma~\ref{lem:hecke}, where now $t_m \in T'_{\gor} = T_{\gor}$ via the identification $I' \otimes \mu_p = I \otimes \mu_p$.
\end{enumerate}
\end{proposition}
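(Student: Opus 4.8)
The plan is to deduce all three parts from the structure of $\widetilde{Y}_0(\gP)^{\min}$ recorded in Theorem~\ref{thm:Y0min} and Lemma~\ref{lem:hecke}, together with the description of $\widetilde{Y}_1(\gP)\to\widetilde{Y}_0(\gP)$ near the boundary contained in the proof of Lemma~\ref{lem:flat}, and then to pass from $U=U(N)$ to general $\gP$-neat $U$ by the quotient presentation $Y_1(\gP)^{\min}=Y'_1(\gP)^{\min}/(U/U')$ used in \S\ref{ss:Y1pmin}. So I would first treat the case $U=U(N)$.

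Fix a cusp $c=[\unH_1,\unH_2,\alpha]\in C_0(\gP)$ and write $\gP=\gq_1\gq_2$ with $\gq_1=\Ann_{\CO_F}(J_2/\alpha(J_1))$ as in \S\ref{ss:U0cusps}, so $\gor:=\gq_1^{-1}\gP=\gq_2$. Let $\gY_i$ denote the completion of $\widetilde{Y}_i(\gP)^{\tor}$ along the preimage of the boundary component over $c$. The proof of Lemma~\ref{lem:flat} yields $\gY_1\cong\gY_0\times_\CO T$, $T=\prod_{\gp\mid\gP}T_\gp$, where by the computations of \S\ref{ss:Y1ptor} $\Spec T_\gp$ is, for $\gp\mid\gq_1$, the disjoint union of copies of $\Spec\CO$ indexed by the generators of the (constant, near $c$) group $J_1/\gp J_1$, and, for $\gp\mid\gq_2$, the connected local scheme of generators of $\mu_p\otimes(I_1/\gp I_1)$; moreover by the Theorem on Formal Functions this isomorphism persists with $\tor$ replaced by $\min$, and I write $\gY_i^{\min}$ for the resulting completions. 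Separating the components of $\Spec T$, the connected components of the complement of $Y_1(\gP)$ in $Y_1(\gP)^{\min}$ over $c$ are in bijection with the generators of $J_1/\gq_1 J_1$, hence with the $\Xi\in J_1/\gP J_1$ of annihilator $\gq_1$, hence with the clasps over $c$; letting $c$ vary gives Part~(1), and passing to the generic fibre (where $T_\gor\otimes_\CO K$ splits, using $\zeta_{pN}\in\CO$) gives the $C_1(\gP)$-statement. Compatibility with \eqref{eqn:clasps} and with $\pi_1$ is immediate from the construction of $f$ and Theorem~\ref{thm:Y0min}(3). For Part~(2) in this case, the completion of $Y_1(\gP)^{\min}$ along the component attached to $\Xi$ is the factor of $\gY_1^{\min}=\gY_0^{\min}\times_\CO T$ cut out by $\Xi$, namely $\gY_0^{\min}\times_\CO T_\gor$; since $\gd^{-1}I_1^{-1}J_1=\gq_1\cdot(\gd^{-1}I_1^{-1}J_2)$ by Theorem~\ref{thm:Y0min}(3), its ring of functions is exactly the $P$ of the statement, the $q$-exponent lattice being $(\gq^{-1}N^{-1}M)_+\cup\{0\}$ with $M=\gd^{-1}I_1^{-1}J_1$, the coefficients lying in $T_\gor$, and the invariance being the one carried over from Theorem~\ref{thm:Y0min}(2).

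For general $\gP$-neat $U$ with $U'=U(N)\subset U$, I would take the invariants of the completed local ring $\widehat\calO_{W'}=\gY_0^{\min}\,\widehat{\otimes}_\CO\,T_\gor$ of $\widetilde{Y}'_1(\gP)^{\min}$ along a component $W'$ under the stabilizer, inside $\CO_{F,(p),+}^\times\times(U/U')$, of $W'$. As in \S\ref{ss:Y-min} and the discussion following \cite[(24)]{theta}, this stabilizer is identified with $\Aut(\unH,\Xi)$, and its preimage in $\Gamma_{\calC,U}$ is exactly the group $\Gamma$ of the statement: the condition $\alpha\equiv1\bmod\gq$ says multiplication by $\alpha$ on $J/\gq J$ fixes $\Xi$, while $\beta\in\gq J^{-1}I$ says that the $q$-expansion substitution $q^m\mapsto\zeta_N^{-\beta(Nm)}q^m$ fixes the chosen generator of the \'etale part of $G$, equivalently does not permute the boundary components over $c$. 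On $\gY_0^{\min}\,\widehat{\otimes}_\CO\,T_\gor$ such an element $\smallmat{\alpha}{\beta}{0}{\delta}$ acts on the first factor by the usual rule $q^m\mapsto\zeta_N^{-\beta(\alpha^{-1}Nm)}q^{\alpha^{-1}\delta m}$ (as in Proposition~\ref{prop:koecher}(2) with $\vec{k}=\vec{m}=\vec{0}$ and the lattice now $\gq^{-1}N^{-1}M$), and on $T_\gor=\bigoplus_\xi T_\gor^\xi$ through multiplication by $\xi(\delta)$ on the $\xi$-isotypic summand, since $\smallmat{\alpha}{\beta}{0}{\delta}$ acts on $\mu_p\otimes(I/\gor I)$, hence on its generators, through $\delta$. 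Taking invariants produces the displayed ring $P$. Part~(3) is then parallel to the proof of Lemma~\ref{lem:hecke}: if $c=\rho_g^\infty(c')$, choosing the cone decomposition for $c'$ to refine that for $c$, the universal property (as in \S\ref{ss:tY0tor}) extends $\widetilde{\rho}_g$ to $\widetilde{Y}'_1(\gP)^{\tor}\to\widetilde{Y}_1(\gP)^{\tor}$ over $\widetilde{Y}'_0(\gP)^{\tor}\to\widetilde{Y}_0(\gP)^{\tor}$ and compatibly with \eqref{eqn:complete}; as the quasi-isogeny $\psi_i$ in the definition of $\rho_g$ is prime to $p$, it induces the identity on the factor $T_\gor$ (so $T'_\gor=T_\gor$ via $I'\otimes\mu_p=I\otimes\mu_p$), and the induced map on $\widehat\calO_{W'}$ is that of Lemma~\ref{lem:hecke} tensored with $\mathrm{id}_{T_\gor}$, giving $\sum t_m q^m\mapsto\sum\zeta_{N'}^{-\epsilon(N'm)}t_m q^m$. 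Passing to invariants and applying $\SPEC$ to the pushforward to the minimal compactification then gives the assertions for general $U$.

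I expect the main obstacle to be the bookkeeping in Part~(2) for general $U$: verifying that the stabilizer of a boundary component of $Y_1(\gP)^{\min}$ is precisely $\Gamma$ — in particular that the unipotent part of $\Gamma_{\calC,U}$ must lie in $\gq J^{-1}I$ in order to preserve a component, which is the origin of the constraint $\beta\in\gq J^{-1}I$ — and that its action on $\gY_0^{\min}\,\widehat{\otimes}_\CO\,T_\gor$ decomposes as stated, the $\xi(\delta)$-twist on $T_\gor$ being the genuinely new ingredient compared with the $U_0(\gP)$-situation of Theorem~\ref{thm:Y0min} and Proposition~\ref{prop:koecher}; everything else is assembled from the cited results together with the Theorem on Formal Functions and the argument of Lemma~\ref{lem:flat}.
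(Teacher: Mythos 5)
Your proposal is correct and follows essentially the same approach as the paper: the description of the boundary of $Y_1(\gP)^{\min}$ is read off from the isomorphism~(\ref{eqn:complete}) established in the proof of Lemma~\ref{lem:flat} (together with the Theorem on Formal Functions to pass from $\tor$ to $\min$), the $U = U(N)$ case is assembled from this and Theorem~\ref{thm:Y0min}, and the general $\gP$-neat $U$ is handled by taking $U/U'$-invariants, with the acting group identified with $\Gamma$ and the $\xi(\delta)$-twist on $T_\gor$ arising from the $\delta$-action on $\mu_p\otimes(I/\gor I)$, exactly as in the discussion preceding the proposition. The only nuance worth flagging is that the constraint $\beta\in\gq J^{-1}I$ in the definition of $\Gamma$ is most transparently seen as already present in $\Aut(\unH_1,\unH_2,\alpha)\subset\Aut_{\CO_F}(J_2\times I_1)$ (since $\Hom_{\CO_F}(J_2,I_1)=\gq J^{-1}I$), rather than being an additional condition imposed to stabilize a boundary component as you suggest, though this comes to the same thing.
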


We remark that the assumption that $\zeta_p \in \CO$ is only made in order to incorporate the assertions about $Y_1(\gP)_K$ and $C_1(\gP)$ into the statement.

\subsection{$q$-expansions}
Suppose as usual that $\vec{k},\vec{m} \in \Z^{\Theta}$ and $R$ is a Noetherian $\CO$-algebra such that $\chi_{\vec{k}+2\vec{m},R} = 1$ on $\CO_F^\times \cap U$, and consider the line bundle
$$\CA^{\flat}_{\vec{k},\vec{m},R}
:= \rho^* \CA_{\vec{k},\vec{m},R}
= h^*\CA^{(1)}_{\vec{k},\vec{m},R}$$ 
on $Y_1(\gP)_R$ (writing $\rho$ and $h$ also for their restrictions to $Y_1(\gP)_R$ and omitting the subscripts $R$).
We claim that $j^{\flat}_*\CA^{\flat}_{\vec{k},\vec{m},R}$ is coherent and describe its completion along the complement of the image of $j^{\flat}:Y_1(\gP)_R\hookrightarrow Y_1(\gP)^{\min}_R$.
 Assume first that $U$ is of the form $U(N)$ and let $\widetilde{h}^{\tor}$ denote the finite flat morphism $\widetilde{Y}_1(\gP)^{\tor} \to \widetilde{Y}_0(\gP)^{\tor}$.  The isomorphism (\ref{eqn:complete}) then shows that the vector bundle $\widetilde{h}^{\tor}_*\CO_{\widetilde{Y}_1(\gP)^{\tor}}$ is formally canonical, and hence so is 
$$\widetilde{h}^{\tor}_* \widetilde{\CA}^{\flat,\tor}_{\vec{k},\vec{m},R} = \widetilde{\CA}^{(1),\tor}_{\vec{k},\vec{m},R}
\otimes \widetilde{h}^{\tor}_*\CO_{\widetilde{Y}_1(\gP)^{\tor}},
$$ where $\widetilde{\CA}^{\flat,\tor}_{\vec{k},\vec{m},R} = (\widetilde{h}^{\tor})^*\widetilde{\CA}^{(1),\tor}_{\vec{k},\vec{m},R}$.  We may therefore apply the Koecher Principle (again in the form of \cite[Thm.~4.4.10]{KWL:spl}) to conclude that
$f_*\widetilde{\CA}^{\flat,\tor}_{\vec{k},\vec{m},R} = \widetilde{h}_*\widetilde{j}_*^{\flat} \CA^{\flat}_{\vec{k},\vec{m},R}$, where $f$ is the morphism $\widetilde{Y}_1(\gP)_R^{\tor} \to \widetilde{Y}_0(\gP)_R^{\min}$.  Since $\widetilde{h}$ is finite, it follows that $\widetilde{j}_*^{\flat} \widetilde{\CA}^{\flat}_{\vec{k},\vec{m},R}$ is the direct image of $\widetilde{\CA}^{\flat,\tor}_{\vec{k},\vec{m},R}$ under 
$\widetilde{Y}_1(\gP)_R^{\tor} \to \widetilde{Y}_1(\gP)_R^{\min}$,
hence is coherent.  Taking quotients by group actions then yields the following for any sufficiently small (in particular $\gP$-neat) $U$:  
\begin{proposition} \label{prop:Y1pqexp}
The sheaf ${j}^{\flat}_*{\CA}^{\flat}_{\vec{k},\vec{m},R}$ is coherent, and if $Z$ is the connected component of the complement
of $Y_1(\gP)$ in $Y_1(\gP)^{\min}$ corresponding to the 
clasp $(\underline{H},\Xi) \in C_{1/2}(\gP)$,
then the completion of ${j}^{\flat}_*{\CA}^{\flat}_{\vec{k},\vec{m},R}$ along $Z_R$ is the sheaf associated to the $P$-module
$$\left \{\, \sum_{m \in (\gq^{-1}N^{-1}M)_+ \cup \{0\}}\!\!\!\! \!\!\!\!\!\! b \otimes t_mq^m \,
\left|\, \begin{array}{c} 
t_{\alpha^{-1}\delta m}^{\xi} = \zeta_N^{-\beta(\alpha^{-1} Nm)}\chi_{\vec{m},R}(\alpha)\chi_{\vec{k} + \vec{m},R}(\delta) \xi(\delta) t_m^{\xi}\\
\mbox{ for all $\smat{\alpha}{\beta}{0}{\delta} \in \Gamma',\,\,\,\,
m \in (\gq^{-1}N^{-1}M)_+ \cup \{0\}$,}\\
\xi: (\CO_F/\gor)^\times \to \CO^\times\end{array}\,\right.\right\},$$
where $b$ is any basis for $D_{\vec{k},\vec{m}}$ (in the notation of \cite[(20)]{theta}) and the rest of the notation is as in Propositon~\ref{prop:Y1pcomp}, except that now $t_m \in T_{\gor} \otimes_{\CO} R$ in the preceding expression and the definition of $P = P_R$.  Furthermore if $g\in \GL_2(\A_{F,\f}^{(p)})$ and $U' \subset gUg^{-1}$, then the maps induced by the morphisms $j'^{\flat}_*\CA'^{\flat}_{\vec{k},\vec{m},R} \to
j^{\flat}_*\CA^{\flat}_{\vec{k},\vec{m},R}$ on completions are given by the same formula as in Proposition~\ref{prop:koecher}(3).

\end{proposition}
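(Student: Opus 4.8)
The plan is to extract the statement from the facts already assembled in the paragraphs preceding it. The coherence of $j^\flat_*\CA^\flat_{\vec{k},\vec{m},R}$ has, for $U = U(N)$, in effect already been proved there: the extension $\widetilde{\CA}^{\flat,\tor}_{\vec{k},\vec{m},R} = (\widetilde{h}^{\tor})^*\widetilde{\CA}^{(1),\tor}_{\vec{k},\vec{m},R}$ is formally canonical because $\widetilde{h}^{\tor}_*\CO_{\widetilde{Y}_1(\gP)^{\tor}}$ is, by the base-change isomorphism (\ref{eqn:complete}), so \cite[Thm.~4.4.10]{KWL:spl} identifies $\widetilde{j}^\flat_*\widetilde{\CA}^\flat_{\vec{k},\vec{m},R}$ with the direct image of this formally canonical sheaf under $\widetilde{Y}_1(\gP)^{\tor}_R\to\widetilde{Y}_1(\gP)^{\min}_R$, and the general $\gP$-neat case follows by taking $(U/U')$-invariants exactly as for $\CO_{Y_1(\gP)^{\min}}$ in \S\ref{ss:Y1pmin}. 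The remaining work is therefore to unwind this direct image into the displayed $q$-expansion module and to check the Hecke compatibility; I would treat $U = U(N)$ first (assuming $\zeta_{pN}\in\CO$) and then descend.

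For the completed stalk along a clasp component $Z$ corresponding to $[\unH,\Xi]$, I would work over the completion of $\widetilde{Y}'_1(\gP)^{\tor}$ along the preimage of a chosen $\widetilde{c}\in\widetilde{C}'_0(\gP)$ lying over $c = (\unH_1,\unH_2,\alpha)$, and write $\gP = \gq\gor$ with $\gq = \Ann_{\CO_F}(\Xi)$. Using that $\widetilde{h}$ is finite and that $\widetilde{h}^{\tor}_*\widetilde{\CA}^{\flat,\tor}_{\vec{k},\vec{m},R} = \widetilde{\CA}^{(1),\tor}_{\vec{k},\vec{m},R}\otimes\widetilde{h}^{\tor}_*\CO_{\widetilde{Y}'_1(\gP)^{\tor}}$, I would combine three ingredients: the description in \cite[(22)]{theta} of the completion of $\widetilde{j}_*\widetilde{\CA}_{\vec{k},\vec{m},R}$ at the cusp $\pi_1^\infty(c) = [\unH_1]$ of $\widetilde{Y}^{\min}$, namely the module of $D_{\vec{k},\vec{m}}\otimes_\CO R$-valued formal $q$-expansions over $N^{-1}M_+\cup\{0\}$ with $M = \gd^{-1}I^{-1}J$; the effect of $\pi_1$ on completions from Theorem~\ref{thm:Y0min}(3), which rescales the monoid; and the isomorphism (\ref{eqn:complete}) (valid with ``min'' in place of ``tor'' by the Theorem on Formal Functions), under which $\widetilde{h}_*\CO$ becomes $\CO\otimes_\CO\bigotimes_{\gp\mid\gP}T_\gp$ and the completion acquires the further rescaling replacing $M$ by $\gq^{-1}M$ recorded after Lemma~\ref{lem:flat}. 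The \'etale tensor factors $T_\gp$ ($\gp\mid\gq$) merely select the component $Z$, while the one multiplicative factor $T_\gor$ survives as coefficients, and this should produce exactly the module of $D_{\vec{k},\vec{m}}\otimes_\CO T_\gor\otimes_\CO R$-valued $q$-expansions over $(\gq^{-1}N^{-1}M)_+\cup\{0\}$ --- the displayed $P_R$-module before imposing $\Gamma'$-invariance.

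To descend from $\widetilde{Y}_1(\gP)^{\min}$ to $Y_1(\gP)^{\min}$ I would then take invariants under the stabilizer of $Z$, which by the $\gP$-neatness hypothesis and the computation in the proof of Lemma~\ref{lem:flat} is the group $\Gamma'\subset\Gamma_{\calC,U}$ of Proposition~\ref{prop:Y1pcomp}(2). The point to verify is that a matrix $\smat{\alpha}{\beta}{0}{\delta}\in\Gamma'$ acts on a $q$-expansion through three separate contributions, each already identified: the monomial relabelling $q^m\mapsto q^{\alpha^{-1}\delta m}$ together with the root-of-unity factor $\zeta_N^{-\beta(\alpha^{-1}Nm)}$ from the uniformization of the completion of $\widetilde{Y}_0(\gP)^{\min}$ (as in Proposition~\ref{prop:koecher}(2) for $i=1$); the weight factor $\chi_{\vec{m},R}(\alpha)\chi_{\vec{k}+\vec{m},R}(\delta)$ from the $D_{\vec{k},\vec{m}}$-component and the descent of $\widetilde{\CA}_{\vec{k},\vec{m},R}$ (again as in Proposition~\ref{prop:koecher}(2)); and the factor $\xi(\delta)$ from the action of $\delta\bmod\gor$ on the coefficient ring $T_\gor = \bigoplus_\xi T_\gor^\xi$ (the diamond action on generators of $\mu_p\otimes(I/\gor I)$), which is the same $\xi(\delta)$ occurring in the structure-sheaf description Proposition~\ref{prop:Y1pcomp}(2). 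Assembling these gives the displayed $P$-module, and setting $\vec{k}=\vec{m}=0$ recovers Proposition~\ref{prop:Y1pcomp}(2) as a consistency check.

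Finally, for the Hecke compatibility I would imitate the proof of Lemma~\ref{lem:hecke} (hence of Proposition~\ref{prop:koecher}(3)): reduce to $U = U(N)$, $U' = U(N')$, choose the cone decomposition for $c'$ to refine that of $c := \rho_g^\infty(c')$ so that $\widetilde{\rho}_g$ extends to the toroidal compactifications, and reduce by \cite[Thm.~3.4.1(4)]{KWL:spl} to the Tate-object computation already carried out there --- now carrying along the level-$\gP$ generator, which is transported by the canonical quasi-isogeny so that the coefficients lie in $T'_\gor = T_\gor$ via $I'\otimes\mu_p = I\otimes\mu_p$. The resulting map on $q$-expansions, $b\otimes t_mq^m\mapsto b\otimes\zeta_{N'}^{-\epsilon(N'm)}t_mq^m$, is then the formula of Proposition~\ref{prop:koecher}(3) verbatim. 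I expect the only real obstacle to be the bookkeeping that simultaneously reconciles the monoid $\gq^{-1}M$, the stabilizer $\Gamma'$, the coefficient ring $T_\gor$ and the character $\xi$: there is no new geometric input beyond what is assembled in \S\ref{sec:U}--\S\ref{sec:U1}, so the care required is entirely in keeping the several twists (by $\zeta_N$, by $\chi$, and by $\xi$) from interfering with one another.
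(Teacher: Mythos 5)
Your proposal is correct and follows the same route the paper takes: use the isomorphism (\ref{eqn:complete}) to show $\widetilde{h}^{\tor}_*\widetilde{\CA}^{\flat,\tor}_{\vec{k},\vec{m},R}$ is formally canonical on $\widetilde{Y}_0(\gP)^{\tor}$, apply the Koecher Principle there, deduce from finiteness of $\widetilde{h}$ that $\widetilde{j}^\flat_*\widetilde{\CA}^\flat$ is the direct image of $\widetilde{\CA}^{\flat,\tor}$, and then take quotients by the group actions; the Hecke compatibility is by the argument of Lemma~\ref{lem:hecke}. One small slip worth correcting in the write-up: it is the pushforward $\widetilde{h}^{\tor}_*\widetilde{\CA}^{\flat,\tor}$ on $\widetilde{Y}_0(\gP)^{\tor}$ (identified with a $\vec{\M}^{\spl,\tor}_{\H,\Sigma,\CO}$) that is formally canonical and to which \cite[Thm.~4.4.10]{KWL:spl} applies, not $\widetilde{\CA}^{\flat,\tor}$ itself on $\widetilde{Y}_1(\gP)^{\tor}$ (which is not a Lan-style toroidal compactification); the desired identification of $\widetilde{j}^\flat_*\widetilde{\CA}^\flat$ with the pushforward from $\widetilde{Y}_1(\gP)^{\tor}$ to $\widetilde{Y}_1(\gP)^{\min}$ is then a consequence via finiteness of $\widetilde{h}$, as you and the paper both use.
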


\section{Kodaira--Spencer and cohomological vanishing} \label{sec:KS}

We now proceed to explain how the main results of \cite{KS} extend to toroidal compactifications.  We assume $U = U(N)$, and choose
polyhedral cone decompositions for cusps in $\widetilde{C}$ so
that the resulting toroidal compactification $\widetilde{Y}^{\tor}$ is smooth over $\CO$.  We let $\widetilde{Z}$ denote the reduced complement of $\widetilde{Y}$ in $\widetilde{Y}^{\tor}$.

\subsection{The Kodaira--Spencer isomorphism}
\label{ss:KS}

Recall from \cite[\S7.3]{theta} that the Kodaira--Spencer filtration on $\Omega^1_{\widetilde{Y}/\CO}$ extends to one on
$\Omega^1_{\widetilde{Y}/\CO}(\log\widetilde{Z})$, with graded pieces canonically isomorphic to
$\widetilde{\CA}_{2\e_\theta,-\e_\theta}$ (omitting the subscript $R$ when $R = \CO$).  It follows that
$$\CK_{\widetilde{Y}^{\tor}/\CO}(\widetilde{Z})
 = \Omega^d_{\widetilde{Y}^\tor/\CO}(\widetilde{Z})
 = \bigwedge\nolimits^d_{\CO_{\widetilde{Y}^\tor}}\left(\Omega^1(\log\widetilde{Z})\right)$$
is canonically isomorphic to $\widetilde{\CA}_{\vec{2},-\vec{1}} = \widetilde{\delta}^{-1}\widetilde{\omega}^{\otimes 2}$, where as usual
$\CK_{\widetilde{Y}^\tor/\CO}$ denotes the dualizing sheaf of $\widetilde{Y}^{\tor}$ over $\CO$.  Furthermore the extension is obtained from isomorphisms
\begin{equation}\label{eqn:canonical}\xi^* (\CK_{\widetilde{Y}^{\tor}/\CO}(\widetilde{Z}))
  \,\,\cong\,\, D_{F/\Q}^{-1}\bigwedge\nolimits^d(I^{-1}J) \otimes \CO_{\widehat{S}} \,\,\cong \,\,\xi^*(\widetilde{\delta}^{-1}\widetilde{\omega}^{\otimes 2}),\end{equation}
where $\widehat{S}$ is the formal scheme whose quotient 
by $(\CO_F^{\times}\cap U)^2$ defines the completion of $\widetilde{Y}^{\tor}$ along the connected component of $\widetilde{Z}$ corresponding to a cusp $(H,I,\lambda,[\eta])$, $\xi:\widehat{S}\to \widetilde{Y}^{\tor}$ is the composite of the quotient map with the completion, and the second isomorphism in (\ref{eqn:canonical}) is given by the canonical trivializations 
$$\xi^*\widetilde{\omega} \cong \bigwedge\nolimits^d(I^{-1})\otimes \CO_{\widehat{S}}\quad\mbox{and}\quad
\xi^*(\widetilde{\delta}^{-1}\widetilde{\omega}) \cong D_{F/\Q}^{-1}\bigwedge\nolimits^d J\otimes \CO_{\widehat{S}}.$$

Turning now to level $U_0(\gP)$, recall from \cite[Thm.~3.2.1]{KS} that the Kodaira--Spencer isomorphism on $\widetilde{Y}_0(\gP)$ takes the form 
\begin{equation} \label{eqn:KS0P}
\CK_{\widetilde{Y}_0(\gP)/\CO} \cong \widetilde{\pi}_1^*\widetilde{\omega} \otimes_{\CO_{\widetilde{Y}_0(\gP)}} \widetilde{\pi}_2^*(\widetilde{\delta}^{-1}\widetilde{\omega}).
\end{equation}
To extend this to toroidal compactifications, let us
choose the polyhedral cone decompositions for cusps in $\widetilde{C}_0(\gP)$ so that the morphisms $\widetilde{\pi}_i$ (for $i=1,2$) extend to $\widetilde{Y}_0(\gP)^\tor \to \widetilde{Y}^\tor$ (which we still denote $\widetilde{\pi}_i$). Furthermore since $\widetilde{Y}_0(\gP)^{\ord}$ is smooth, we may refine the chosen cone decompositions so as to ensure that 
$\widetilde{Y}_0(\gP)^{\tord}$ is smooth.  Note in particular 
that $\widetilde{Y}_0(\gP)^\tor$ is a local complete intersection over $\CO$.

\begin{theorem} \label{thm:KS}
The isomorphism (\ref{eqn:KS0P}) extends to an isomorphism
$$\CK_{\widetilde{Y}_0(\gP)^\tor}(\widetilde{Z}_0(\gP)) \cong \widetilde{\pi}_1^*\widetilde{\omega} \otimes_{\CO_{Y_0(\gP)^\tor}} \widetilde{\pi}_2^*(\widetilde{\delta}^{-1}\widetilde{\omega}),$$
where $\widetilde{Z}_0(\gP)$ is the reduced complement of $\widetilde{Y}_0(\gP)$ in $\widetilde{Y}_0(\gP)^\tor$.
\end{theorem}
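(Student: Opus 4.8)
\medskip\noindent\textbf{Proof proposal.}
The plan is to regard (\ref{eqn:KS0P}) as a nowhere-vanishing rational section of a line bundle on $\widetilde{Y}_0(\gP)^\tor$ and to show it has no zero or pole along the boundary, the latter by reduction to the characteristic-zero case. First I would observe that both sheaves in the asserted isomorphism are invertible on $\widetilde{Y}_0(\gP)^\tor$ (each of whose connected components is integral, being normal and connected): for $\widetilde{\pi}_1^*\widetilde{\omega}\otimes\widetilde{\pi}_2^*(\widetilde{\delta}^{-1}\widetilde{\omega})$ this is clear since $\widetilde{\omega},\widetilde{\delta}$ are invertible and $\widetilde{\pi}_1,\widetilde{\pi}_2$ extend; and $\CK_{\widetilde{Y}_0(\gP)^\tor}(\widetilde{Z}_0(\gP))$ is invertible because $\widetilde{Y}_0(\gP)^\tor$ is a local complete intersection (hence Gorenstein) over $\CO$, so $\CK_{\widetilde{Y}_0(\gP)^\tor/\CO}$ is invertible, while $\widetilde{Z}_0(\gP)$ is an effective Cartier divisor: by Theorem~\ref{thm:tY0tor}(2) the scheme is, along each $\widetilde{Z}_{\widetilde{c}}$, a free $(\CO_F^\times\cap U)^2$-quotient of a smooth torus embedding over $\CO$ in which $\widetilde{Z}_0(\gP)$ is the union of the codimension-one torus-orbit closures, in particular a relative normal crossings divisor each of whose components is flat over $\CO$. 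Then (\ref{eqn:KS0P}) defines a rational section $\phi$ of $\HOM(\CK_{\widetilde{Y}_0(\gP)^\tor}(\widetilde{Z}_0(\gP)),\widetilde{\pi}_1^*\widetilde{\omega}\otimes\widetilde{\pi}_2^*(\widetilde{\delta}^{-1}\widetilde{\omega}))$ which is an isomorphism over the dense open $\widetilde{Y}_0(\gP)$; by normality it is an isomorphism everywhere as soon as $\ord_D\phi=0$ for every prime divisor $D$, which is automatic for $D\not\subset\widetilde{Z}_0(\gP)$. So only the boundary divisors $D\subset\widetilde{Z}_0(\gP)$ remain.

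For these I would use that each such $D$ is flat over $\CO$, so its generic point maps to the generic point of $\Spec\CO$; hence the local ring of $\widetilde{Y}_0(\gP)^\tor$ at $D$ coincides with that of the generic fibre $\widetilde{Y}_0(\gP)^\tor_K$ at the corresponding prime divisor $D_K$, and $\ord_D\phi=\ord_{D_K}\phi_K$, where $\phi_K$ is the restriction of $\phi$ to the generic fibre. Over $K$, the interior $\widetilde{Y}_0(\gP)_K$ is smooth, $\psi$ is \'etale, and $\widetilde{Y}_0(\gP)^\tor_K$ is a smooth toroidal compactification of it; there (\ref{eqn:KS0P}), already an isomorphism on $\widetilde{Y}_0(\gP)_K$ by \cite[Thm.~3.2.1]{KS}, extends to an isomorphism of line bundles over $\widetilde{Y}_0(\gP)^\tor_K$ by the standard characteristic-zero theory (where the local computation at the boundary presents no difficulty, $\psi$ being \'etale): the argument of \cite[\S7.3]{theta} applies to any toroidal compactification of a Hilbert modular variety over $K$, and is insensitive to the level and to $\psi$ being an isogeny rather than an isomorphism; see also \cite{rap}, \cite{Dim}, \cite{KWL:PhD}. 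Hence $\phi_K$ is an isomorphism on all of $\widetilde{Y}_0(\gP)^\tor_K$, so $\ord_{D_K}\phi_K=0$, so $\ord_D\phi=0$ for every boundary divisor $D$, and $\phi$ is an isomorphism.

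I expect the genuine content to lie not in this extension argument (which, granted the inputs above, is essentially formal) but in the geometric preparation it rests on: that $\widetilde{Y}_0(\gP)^\tor$ is a local complete intersection over $\CO$ (so that $\CK$ is invertible and the boundary Cartier), which uses Pappas--Rapoport local model theory and the choice of smooth cone decompositions for the ordinary locus, and that the degeneracy maps extend compatibly with the toric boundary (Theorem~\ref{thm:tY0tor}). A more hands-on alternative to the last step would be to verify $\ord_D\phi=0$ directly on the formal completion along each $\widetilde{Z}_{\widetilde{c}}$: by Theorem~\ref{thm:tY0tor}(3) one identifies $A_i^\tor$ there with the Tate object $(\gd^{-1}I_i\otimes\G_m)/\widetilde{q}^{\gd^{-1}J_i}$, trivializes both sides by $\bigwedge^d(M)\otimes\CO$ (with $M=\gd^{-1}I_1^{-1}J_2$, using the trivializations of (\ref{eqn:canonical}) for the cusps $[\utH_1]$ and $[\utH_2]$, the identity $I_1^{-1}J_2=\gd M$, the canonical $D_{F/\Q}^{-1}\bigwedge^d(\gd)\cong\Z$, and on the $\CK$-side the canonical trivialization of the logarithmic $d$-forms on a smooth torus embedding, which descends through the quotient since $(\CO_F^\times\cap U)^2$ acts on $\bigwedge^d_\Z M$ through a homomorphism to $\{\pm1\}$ that is trivial for $U=U(N)$, $N\ge3$), and then reads off from the explicit Gauss--Manin connection on the Tate object that $\phi$ is a unit in these coordinates. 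The difficulty of this route (and my reason for preferring the reduction to characteristic zero) is that in characteristic $p$ the isogeny $\psi$ is not \'etale and $\widetilde{Y}_0(\gP)^\tor$ is not smooth over $\CO$, so although the computation has the same shape as the one in \cite[\S7.3]{theta}, one must check carefully that the $\gq_i$-contributions (with $\gq_1\gq_2=\gP$) cancel and introduce no spurious zero or pole along the boundary.
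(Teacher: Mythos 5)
Your proposal is correct in substance, but it takes a genuinely different route from the paper. The paper proves the extension integrally by an explicit boundary computation: it runs the same argument that establishes (\ref{eqn:canonical}) to produce an isomorphism between the completions of $\CK_{\widetilde{Y}_0(\gP)^{\tor}/\CO}(\widetilde{Z}_0(\gP))$ and $\widetilde{\pi}_1^*\widetilde{\omega}\otimes\widetilde{\pi}_2^*(\widetilde{\delta}^{-1}\widetilde{\omega})$ along each component of $\widetilde{Z}_0(\gP)$, both trivialized by $D_{F/\Q}^{-1}\bigwedge^d(I_1^{-1}J_2)\otimes\CO_{\widehat{S}_0(\gP)}$, and then checks that this boundary isomorphism agrees with the completion of (\ref{eqn:KS0P}) via a commutative diagram comparing both with the level-$U$ picture through $\widetilde{\pi}_1$, the map $J_1\to J_2$ given by $\alpha$, and the extended isogeny on semi-abelian schemes, the decisive input being the compatibility \cite[Prop.~3.2.3]{KS}. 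You instead observe that both sheaves are invertible (lci/Gorenstein plus Cartier boundary) and that every boundary prime divisor is $\CO$-flat, so the only codimension-one points at issue lie over the generic point of $\Spec\CO$, where smoothness and \'etaleness make the extension comparatively soft; this buys a shorter, softer proof of the bare statement. Two caveats. First, even in characteristic zero you must show that the specific rational section defined by (\ref{eqn:KS0P}) -- not merely some abstract isomorphism of the two extended line bundles -- has vanishing order along each boundary divisor; for that you still need either the compatibility of the Iwahori-level Kodaira--Spencer isomorphism with the level-$U$ one under $\widetilde{\pi}_1$ (\cite[Prop.~3.2.3]{KS}, which you gesture at but should invoke explicitly), combined with the log-\'etaleness of $\widetilde{\pi}_1^{\tor}$ over $K$ and the fact that the isogeny-induced maps on $\widetilde{\omega}$ and $\widetilde{\delta}$ become isomorphisms at the boundary over $K$ (their cokernels being $p$-power torsion), or else the direct Tate-object computation you sketch as an alternative -- so your route does not entirely avoid the paper's key input, it just gets to use it in characteristic zero where it is easier. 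Second, the paper's explicit description of the completed isomorphism is not a detour: it is exactly what is reused later (for instance in the saving-trace analysis of \S\ref{ss:st} and Proposition~\ref{prop:stq}), so the divisorial argument proves Theorem~\ref{thm:KS} but delivers strictly less information than the paper's proof.
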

\begin{proof}
The same argument that establishes (\ref{eqn:canonical})
gives an isomorphism
$$\xi_0^* (\CK_{\widetilde{Y}_0(\gP)^{\tor}/\CO}(\widetilde{Z}_0(\gP)))
 \cong D_{F/\Q}^{-1}\bigwedge\nolimits^d(I_1^{-1}J_2) \otimes \CO_{\widehat{S}_0(\gP)} \cong \xi_0^*(\widetilde{\pi}_1^*(\widetilde{\omega})\otimes_{\CO_{\widehat{Y}_0(\gP)^\tor}}\widetilde{\pi}_2^*(\widetilde{\delta}^{-1}\widetilde{\omega})),$$
where $\widehat{S}_0(\gP)$ is the formal scheme whose quotient defines the completion along a connected component corresponding to a cusp of the form $(\underline{H}_1,\underline{H}_2,\alpha)$, and
$\xi_0:\widehat{S}_0(\gP) \to \widetilde{Y}_0(\gP)^\tor$ is the resulting morphism.  Furthermore the isomorphisms are compatible with those of (\ref{eqn:canonical}) under $\widetilde{\pi}_1$ in the sense that the diagram
$$\begin{array}{ccccc}
\xi_0^*\widetilde{\pi}_1^* (\CK_{\widetilde{Y}^{\tor}/\CO}(\widetilde{Z}))
  & \cong & D_{F/\Q}^{-1}\bigwedge\nolimits^d(I_1^{-1}J_1) \otimes \CO_{\widehat{S}_0(\gP)} &\cong &\xi_0^*\widetilde{\pi}_1^*(\widetilde{\delta}^{-1}\widetilde{\omega}^{\otimes 2})
\\
\downarrow&&\downarrow&&\downarrow \\
\xi_0^* (\CK_{\widetilde{Y}_0(\gP)^{\tor}/\CO}(\widetilde{Z}_0(\gP)))
& \cong & D_{F/\Q}^{-1}\bigwedge\nolimits^d(I_1^{-1}J_2) \otimes \CO_{\widehat{S}_0(\gP)}& \cong &\xi_0^*(\widetilde{\pi}_1^*(\widetilde{\omega})\otimes\widetilde{\pi}_2^*(\widetilde{\delta}^{-1}\widetilde{\omega}))
\end{array}
$$
commutes, where the top line is the pull-back of (\ref{eqn:canonical}) via the morphism $\zeta:\widehat{S}_0(\gP) \to \widehat{S}$ over $\widetilde{\pi}_1$, the leftmost arrow is induced by the morphism $\widetilde{\pi}_1^*\CK_{\widetilde{Y}^\tor/\CO} \to \CK_{\widetilde{Y}_0(\gP)^\tor/\CO}$, the middle by the morphism $J_1 \to J_2$ defined by $\alpha$, and the last by the morphism 
$\widetilde{\pi}_1^*(\widetilde{\delta}^{-1}\widetilde{\omega}) \to \widetilde{\pi}_2^*(\widetilde{\delta}^{-1}\widetilde{\omega})$
induced by the extension of the universal isogeny to the semi-abelian schemes over $\widetilde{Y}_0(\gP)^\tor$. 
The theorem then follows from the compatibility under $\widetilde{\pi}_1$ between the Kodaira--Spencer isomorphisms on $\widetilde{Y}_0(\gP)$ and $\widetilde{Y}$ (see \cite[Prop.~3.2.3]{KS}), 
the completion of the desired isomorphism along each connected component of $\widetilde{Z}_0(\gP)$ being the one described at the start of the proof
(or more precisely, its quotient by $(\CO_F^\times \cap U)^2$).
\end{proof}

\subsection{Cohomological vanishing} \label{ss:vanish}

Now we explain how the cohomological vanishing results (and consequences) of \cite[\S5.3]{KS} extend to toroidal compactifications.  Recall from \cite[Cor.~5.3.2, 5.3.5]{KS}
that the coherent sheaves
$$R^i\pi_{1,*}\CK_{Y_0(\gP)/\CO}\quad\mbox{and}\quad R^i\pi_{1,*}\CO_{Y_0(\gP)}$$
vanish for $i > 0$ and are locally free if $i=0$, so the same holds with $Y_0(\gP)$ (resp.~$\pi_1$) replaced by $\widetilde{Y}_0(\gP)$ (resp.~$\widetilde{\pi}_1$).  We claim that the same holds for $\widetilde{Y}_0(\gP)$ replaced by $\widetilde{Y}_0(\gP)^\tor$ (and $\widetilde{\pi}_1$ by its extension).

To that end, first note that by Grothendieck--Serre duality, it suffices to prove the assertions for $\CO_{\widetilde{Y}_0(\gP)^{\tor}}$ (see for example the proof of \cite[Cor.~5.3.4]{KS}).  Furthermore in view of the result for the restriction to $\widetilde{Y}_0(\gP)$, it suffices to prove the assertions after localization at every point of $\widetilde{Z}$, and hence by the Theorem on Formal Functions (and faithful flatness of completion), we are reduced to proving the analogous assertions for the morphisms $\zeta:\widehat{S}_0(\gP) \to \widehat{S}$.
Applying the Theorem on Formal Functions again, we may replace $\zeta$ by the associated morphisms of toric varieties $f:T_{M,\Sigma} \to T_{M_1,\sigma}$, where $M_1 = \gd^{-1}I_1^{-1}J_1$, $M = \gd^{-1}I_1^{-1}J_2$, $\sigma$ is a cone in the chosen decomposition of $(M_1^*\otimes \R)_{\ge 0}$, and $\Sigma$ is its refinement in the chosen decomposition of $(M^*\otimes \R)_{\ge 0}$.  To prove the assertions for $f$, write it as the composite
$$T_{M,\Sigma}\stackrel{g}{\longrightarrow} T_{M,\sigma}
\stackrel{h}{\longrightarrow} T_{M_1,\sigma}.$$
By \cite[Prop.~8.5.1]{danilov} (a priori over $K$ and $k$, hence over $\CO$), we have $R^ig_*\CO_{T_{M,\Sigma}} = 0$ for $i > 0$ and $g_*\CO_{T_{M,\Sigma}} = \CO_{T_{M,\sigma}}$. Since $h$ is finite flat (of degree $[J_2:{\alpha}(J_1)]$), it follows that $R^if_*\CO_{T_{M,\Sigma}} = 0$ for $i > 0$ and $f_*\CO_{T_{M,\Sigma}} = h_*\CO_{T_{M,\sigma}}$ is locally free.

The same arguments apply to show that $R^i\widetilde{\pi}_{2,*}\CK_{\widetilde{Y}^\tor_0(\gP)/\CO}$ and $R^i\widetilde{\pi}_{2,*}\CO_{\widetilde{Y}^\tor_0(\gP)}$
vanish for $i > 0$ and are locally free (of rank $[U:U_0(\gP)] = \prod_{\gp|\gP} (p^{f_\gp} + 1)$) if $i=0$.  Recall also that analogous assertions hold for the composite 
$$\widetilde{\varphi}:\,\,\widetilde{Y}_1(\gP)\,\, \longrightarrow \,\,\widetilde{Y}_0(\gP) \,\,\stackrel{\widetilde{\pi}_1}{\longrightarrow}\,\, \widetilde{Y}$$
(but not with $\widetilde{\pi}_1$ replaced by $\widetilde{\pi}_2$, due to the assymetry in the definition of $\widetilde{Y}_1(\gP)$; see \cite[Rmk.~5.3.3]{KS}).  The extension to $\widetilde{Y}_0(\gP)^\tor$ of the result for $\widetilde{\pi}_1$, together with the isomorphism (\ref{eqn:complete}), implies that the result for $\widetilde{\varphi}$ extends also to the morphism
$\widetilde{Y}_1(\gP)^\tor \to \widetilde{Y}^\tor$ if $U = U(N)$ is $\gP$-neat.  We thus have the following extension of the results in \cite{KS}:
\begin{theorem} \label{thm:vanish}
For $j = 1,2$, the coherent sheaves 
$$R^i\widetilde{\pi}_{j,*}\CK_{\widetilde{Y}_0(\gP)^{\tor}/\CO}
\quad\mbox{and}\quad
R^i\widetilde{\pi}_{j,*}\CO_{\widetilde{Y}_0(\gP)^{\tor}}$$
vanish if $i > 0$, and are locally free of rank
$[U:U_0(\gP)] = \prod_{\gp|\gP}(p^{f_\gp} + 1)$ over $\CO_{\widetilde{Y}^\tor}$ if $i=0$.
Similarly if $U$ is $\gP$-neat, then 
$R^i\widetilde{\varphi}_{*}\CK_{\widetilde{Y}_1(\gP)^{\tor}/\CO}$
and
$R^i\widetilde{\varphi}_{*}\CO_{\widetilde{Y}_1(\gP)^{\tor}}$
vanish if $i > 0$, and are locally free of rank
$[U:U_1(\gP)] = \prod_{\gp|\gP}(p^{2f_\gp} - 1)$ if $i=0$.
\end{theorem}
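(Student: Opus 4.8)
The plan is to deduce all four statements from their known analogues over the open parts $\widetilde{Y}_0(\gP)$ and $\widetilde{Y}_1(\gP)$, namely \cite[Cor.~5.3.2, 5.3.5]{KS}, by a local analysis along the boundary that becomes toric. First, since $\widetilde{\pi}_j^{\tor}$ and $\widetilde{\varphi}^{\tor}$ are proper with source a local complete intersection over $\CO$ and target smooth over $\CO$ of the same dimension, their relative dualizing complexes are invertible sheaves in degree $0$, so Grothendieck--Serre duality (as in the proof of \cite[Cor.~5.3.4]{KS}) interchanges $\CK_{(-)/\CO}$ and $\CO_{(-)}$ in the statement; hence it is enough to treat the structure sheaves.

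Since the assertions for the structure sheaves hold over $\widetilde{Y}$ by \cite{KS}, the vanishing of $R^i$ for $i>0$ and the local freeness and rank of $R^0$ need only be checked after localising at the points of the reduced boundary $\widetilde{Z}$ of $\widetilde{Y}$ in $\widetilde{Y}^{\tor}$. By the Theorem on Formal Functions and faithful flatness of completion this reduces, for $\widetilde{\pi}_1$, first to the morphism $\zeta\colon\widehat{S}_0(\gP)\to\widehat{S}$ of formal boundary charts described in Theorem~\ref{thm:tY0tor}(2),(3), and then (by the Theorem on Formal Functions once more) to the associated morphism of affine toric $\CO$-schemes $f\colon T_{M,\Sigma}\to T_{M_1,\sigma}$, where $M_1=\gd^{-1}I_1^{-1}J_1$, $M=\gd^{-1}I_1^{-1}J_2$ (so $M_1\subseteq M$ via $\alpha$), and $\Sigma$ refines $\sigma$. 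Writing $f=h\circ g$ with $g\colon T_{M,\Sigma}\to T_{M,\sigma}$ and $h\colon T_{M,\sigma}\to T_{M_1,\sigma}$ finite flat, I would quote \cite[Prop.~8.5.1]{danilov} (a priori over $K$ and over $k$, hence over $\CO$) to get $R^ig_*\CO=0$ for $i>0$ and $g_*\CO=\CO_{T_{M,\sigma}}$; then $R^if_*\CO=0$ for $i>0$ and $f_*\CO=h_*\CO_{T_{M,\sigma}}$ is locally free of rank $[M:M_1]=[J_2:\alpha(J_1)]=|\CO_F/\gq_1|$. Summing these ranks over the boundary components of $\widetilde{Y}_0(\gP)^{\tor}$ lying over a fixed component of $\widetilde{Z}$, which by \S\ref{ss:U0cusps} are indexed by the ideals $\gq_1$ with $\gP\subseteq\gq_1\subseteq\CO_F$, gives $\sum_{\gq_1}|\CO_F/\gq_1|=\prod_{\gp\mid\gP}(1+p^{f_\gp})=[U:U_0(\gP)]$, as required. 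The case of $\widetilde{\pi}_2$ is identical with $I_1$ replaced by $I_2$ and $\gq_1$ by $\gq_2$ (the sum over $\gq_2$ again being $\prod_{\gp\mid\gP}(1+p^{f_\gp})$).

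For $\widetilde{\varphi}$ I would write it as $\widetilde{\pi}_1^{\tor}$ precomposed with the finite flat morphism $\widetilde{Y}_1(\gP)^{\tor}\to\widetilde{Y}_0(\gP)^{\tor}$; using \cite[Cor.~5.3.5]{KS} over $\widetilde{Y}_1(\gP)$ and the same localisation, the claim again reduces to the boundary, where, $U=U(N)$ being $\gP$-neat, the isomorphism (\ref{eqn:complete}) from the proof of Lemma~\ref{lem:flat} identifies the relevant formal completion with $\gY_0\times_{\CO}T$, $T$ finite flat over $\CO$ of rank $\#(\CO_F/\gP)^\times=\prod_{\gp\mid\gP}(p^{f_\gp}-1)$. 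Thus the completed morphism is $\zeta$ base-changed along $\Spec T\to\Spec\CO$, and flat base change computes its higher direct images of $\CO$ as $(R^i\zeta_*\CO)\otimes_{\CO}T$: these vanish for $i>0$, and for $i=0$ are locally free of rank $[U:U_0(\gP)]\cdot\operatorname{rk}_{\CO}T=\prod_{\gp\mid\gP}(p^{2f_\gp}-1)=[U:U_1(\gP)]$.

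The step I expect to require the most care is the reduction to the toric model $f$: one must verify that the formal completion of $\widetilde{\pi}_1^{\tor}$ along the boundary is genuinely the $\CO$-base change of the morphism of toric schemes attached to the chosen refining cone decompositions, and keep track of which piece of the degeneration data (in the limit (\ref{eqn:Tate})) corresponds to which of the lattices $M$ and $M_1$; once this is in place, Danilov's vanishing and the index bookkeeping are routine. (Compare the treatment of the interior statements in \cite[\S5.3]{KS} and of toroidal boundary charts in \cite[\S8]{chai} and \cite[\S4]{Dim}.)
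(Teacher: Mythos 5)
Your argument is correct and follows the paper's proof essentially verbatim: reduce to the structure sheaf by Grothendieck–Serre duality (via the method of [KS, Cor.~5.3.4]), use the known interior result to localize at the boundary, apply the Theorem on Formal Functions twice to pass to the toric model $f = h \circ g: T_{M,\Sigma} \to T_{M,\sigma} \to T_{M_1,\sigma}$, quote Danilov for $g$ and finite flatness for $h$, and finally for $\widetilde{\varphi}$ combine the $\widetilde{\pi}_1$ result with the splitting isomorphism (\ref{eqn:complete}). The only addition you make is the explicit verification of the rank by summing $|\CO_F/\gq_1|$ over the boundary strata lying above a given cusp; the paper instead leaves this to local constancy of rank for a locally free sheaf together with the known rank over the dense open $\widetilde{Y}$ — both are fine.
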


\subsection{The saving trace}\label{ss:st}
We also obtain, just as in \cite[Cor.~5.3.5]{KS}, a perfect pairing
$$\widetilde{\pi}_{1,*}\CK_{\widetilde{Y}_0(\gP)^\tor/\CO}
\otimes_{\CO_{\widetilde{Y}^\tor}} \widetilde{\pi}_{1,*}\CO_{\widetilde{Y}_0(\gP)^\tor} \,\, \longrightarrow \,\,
\CK_{\widetilde{Y}^\tor/\CO}$$
extending the one over $\widetilde{Y}$, and similarly for
 $\widetilde{\pi}_1$ replaced by $\widetilde{\pi}_2$ or $\widetilde{\phi}$ (in the latter case with $\widetilde{Y}_1(\gP)^\tor$ instead of $\widetilde{Y}_0(\gP)^\tor$).  Furthermore the same argument as in the proof of \cite[Cor.~5.3.7]{KS} shows that we may replace the schemes and morphisms by their base-changes from $\CO$ to an arbitrary Noetherian $\CO$-algebra $R$.
In particular, we obtain an isomorphism
$$\widetilde{\pi}_{1,*}\CO_{\widetilde{Y}^\tor_0(\gP)_R}
\,\,\stackrel{\sim}{\longrightarrow}\,\,
\HOM_{\CO_{\widetilde{Y}_R^\tor}}(\widetilde{\pi}_{1,*}\CK_{\widetilde{Y}^\tor_0(\gP)_R/R},\CK_{\widetilde{Y}^\tor_R/R})$$
extending the one over $\widetilde{Y}_R$, the image of the unit section being the trace morphism
$$\widetilde{\pi}_{1,*}\CK_{\widetilde{Y}^\tor_0(\gP)_R/R}\,\,\longrightarrow \,\,\CK_{\widetilde{Y}^\tor_R/R}.$$

As in \cite[\S5.4]{KS}, we may combine the trace morphism with the Kodaira--Spencer isomorphisms over $\widetilde{Y}_0(\gP)^\tor$ and $\widetilde{Y}^\tor$ and the extension to $\widetilde{Y}_0(\gP)^\tor$ of the canonical isomorphism $\widetilde{\pi}_2^*\widetilde{\delta} \stackrel{\sim}{\longrightarrow} \widetilde{\pi}_1^*\widetilde{\delta}$ (defined by multiplication by $\Nm(\gP)^{-1}$ to obtain a morphism
$$\widetilde{\pi}_{1,*}(\CI_{\widetilde{Z}_0(\gP)}\otimes_{\CO_{\widetilde{Y}_0(\gP)^\tor}} \widetilde{\pi}_2^*\widetilde{\omega}) \,\,\longrightarrow\,\, 
\CI_{\widetilde{Z}} \otimes_{\widetilde{Y}^\tor} \widetilde{\omega}$$ over $\widetilde{Y}^\tor$
(where $\CI_{\widetilde{Z}_0(\gP)}$ and $\CI_{\widetilde{Z}}$ denote the ideal sheaves defining the cuspidal divisors).
The morphism extends the pull-back to $\widetilde{Y}$ of the saving trace defined in \cite[(51)]{KS} (in the case $\gP = \gp$), as does the morphism
$$\widetilde{\st}: \widetilde{\pi}_{1,*}\widetilde{\pi}_2^*\widetilde{\omega} \,\,\longrightarrow\,\, 
\widetilde{\omega}$$
obtained from its composite with the morphism
$$\CI_{\widetilde{Z}} \otimes_{\CO_{\widetilde{Y}^\tor}}
\widetilde{\pi}_{1,*}\widetilde{\pi}_2^*\widetilde{\omega}
\,\,= \,\, 
\widetilde{\pi}_{1,*}
(\widetilde{\pi}_1^* \CI_{\widetilde{Z}} \otimes_{\CO_{\widetilde{Y}_0(\gP)^\tor}}\widetilde{\pi}_2^*\widetilde{\omega})\,\,\longrightarrow\,\,
\widetilde{\pi}_{1,*}(\CI_{\widetilde{Z}_0(\gp)}\otimes_{\CO_{\widetilde{Y}_0(\gP)^\tor}} \widetilde{\pi}_2^*\widetilde{\omega})$$
induced by $\widetilde{\pi}_1^*\CI_{\widetilde{Z}} \to \CI_{\widetilde{Z}_0(\gP)}$.

For an arbitrary Noetherian $\CO$-algebra $R$, we have the flatness (resp.~vanishing) of
$R^i\widetilde{\pi}_{1,*}\widetilde{\pi}_2^*\widetilde{\omega}$ over $\widetilde{Y}^\tor$ for $i=0$ (resp.~$i > 0$) as a consequence of the analogous assertions with $\widetilde{\omega}$ replaced by $\CO_{\widetilde{Y}^\tor}$ and the canonical trivialization of $\xi^*\widetilde{\omega}$.  It follows that 
$(\widetilde{\pi}_{1,*}\widetilde{\pi}_2^*\widetilde{\omega})_R = \widetilde{\pi}_{1,*}(\widetilde{\pi}_2^*\widetilde{\omega}_R)$, yielding a saving trace
$$\widetilde{\st}_R:\,\,\widetilde{\pi}_{1,*}(\widetilde{\pi}_2^*\widetilde{\omega}_R)\,\,\longrightarrow\,\, \widetilde{\omega}_R$$
over $\widetilde{Y}^\tor_R$ (as usual omitting the subscripts for base-changes of morphisms).

Finally we describe the effect of the saving trace on completions along $\widetilde{Z}$.  To that end, let $[\utH_1]$ be a cusp in $\widetilde{C}$ and $\xi:\widehat{S} \to \widetilde{Y}^\tor$ the formal scheme as in (\ref{eqn:canonical}), so that
$\xi^*\widetilde{\omega} \cong \wedge^d(I_1^{-1})\otimes \CO_{\widehat{S}}$.
Recall that the cusps $[\utH_1,\utH_2,\alpha]$ in $\widetilde{C}_0(\gp)$ lying over $[\utH_1]$ are in bijection with factorizations $\gP = \gq_1\gq_2$,
where we define $\gq_1$ and $\gq_2$ by
$\alpha(I_1) = \gq_2 I_2$ and $\alpha(J_1) = \gq_1J_2$,
Thus $\xi^*(\widetilde{\pi}_{1,*}\widetilde{\pi}_2^*\widetilde{\omega})$ is comprised of $2^t$ summands of the form 
$$\zeta_*(\xi_0^*(\widetilde{\pi}_{1,*}\widetilde{\pi}_2^*\widetilde{\omega})) \cong \bigwedge\nolimits^d(I_2^{-1})\otimes \zeta_*\CO_{\widehat{S}_0(\gp)},$$
where
$$ \xi_0: \widehat{S}_0(\gp) \to \widetilde{Y}_0(\gp)^\tor\quad\mbox{and}\quad \zeta: \widehat{S}_0(\gp) \to \widehat{S}$$
are as above and $t = \#\{\gp|\gP\}$.  It follows from \cite[Prop.~3.2.2]{KS} that the saving trace pulls back via $\xi$ to the sum of the morphisms
corresponding under the above isomorphisms to the maps
$$\beta \otimes \Nm(\gq_1)^{-1}\tr:\,\,
\bigwedge\nolimits^d(I_2^{-1})\otimes \zeta_*\CO_{\widehat{S}_0(\gp)} \,\,\longrightarrow\,\,
\bigwedge\nolimits^d(I_1^{-1})\otimes \CO_{\widehat{S}},$$
where $\tr$ is the trace relative to the finite flat morphism $\zeta_*\CO_{\widehat{S}_0(\gP)} \to \CO_{\widehat{S}}$ and $\beta = \Nm(\gq_2)^{-1}\wedge^d\alpha^*$ (writing $\alpha^*:I_2^{-1} \to I_1^{-1}$ for the map induced by $\alpha$).  Note that $\beta$ is an isomorphism and the second factor may be described  locally on $\widehat{S}$ as $\Nm(\gq_1)^{-1}$ times the completion of the trace of the finite flat morphism $h:T_{M,\sigma} \to T_{M_1,\sigma}$.  In particular on global sections the resulting map is given by
$$\begin{array}{ccc}
\Gamma(\widehat{S}_0(\gP),\CO_{\widehat{S}_0(\gP)})
&\to&
\Gamma(\widehat{S},\CO_{\widehat{S}}) \\
\wr||&&\wr|| \\
\CO[[q^m]]_{m \in N^{-1}M_{+} \cup\{0\} } & \to &
\CO[[q^m]]_{m \in N^{-1}M_{1,+} \cup\{0\} } \\ && \\
\sum r_mq^m  & \mapsto &
\sum r_{\alpha(m)}q^m\end{array}$$
(writing $\alpha$ also for the induced map 
$N^{-1}M_{1} \to N^{-1}M$).
Finally it follows that the base-change $\widetilde{\st}_R$ is given by the same formula with $\CO$ replaced by $R$.
We conclude:
\begin{proposition} \label{prop:stq}
The pull-back to $\widetilde{Y}_R$ of the saving trace (i.e., \cite[(51)]{KS}) extends to a morphism $\widetilde{\st}_R: \widetilde{\pi}_{1,*}(\widetilde{\pi}_2^*\widetilde{\omega}_R) \to \widetilde{\omega}_R$ over $\widetilde{Y}_R^\tor$ whose completion along the complement of $\widetilde{Y}_R$ is the morphism whose effect on global sections is induced by the maps
$$\begin{array}{ccc}
\bigwedge\nolimits^d(I_2^{-1}) \otimes_{\CO}
R[[q^m]]_{m \in N^{-1}M_{+} \cup\{0\} } & \to &
\bigwedge\nolimits^d(I_1^{-1}) \otimes_{\CO}
R[[q^m]]_{m \in N^{-1}M_{1,+} \cup\{0\} } \\ && \\
b \otimes \sum r_mq^m  & \mapsto &
\beta(b)\otimes \sum r_{\alpha(m)}q^m.\end{array}$$
\end{proposition}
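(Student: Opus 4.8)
The plan is to assemble the facts established in the preceding discussion of \S\ref{ss:st}. \textbf{Step 1: construct $\widetilde{\st}$ over $\widetilde{Y}^{\tor}$ and base-change it.} By Theorem~\ref{thm:vanish}, $R^i\widetilde{\pi}_{1,*}\CK_{\widetilde{Y}_0(\gP)^{\tor}/\CO}$ and $R^i\widetilde{\pi}_{1,*}\CO_{\widetilde{Y}_0(\gP)^{\tor}}$ vanish for $i>0$ and are locally free over $\CO_{\widetilde{Y}^{\tor}}$ for $i=0$, so Grothendieck--Serre duality (as in \cite[Cor.~5.3.5]{KS}) produces a perfect pairing and hence a trace morphism $\widetilde{\pi}_{1,*}\CK_{\widetilde{Y}_0(\gP)^{\tor}/\CO}\to\CK_{\widetilde{Y}^{\tor}/\CO}$ restricting over $\widetilde{Y}$ to the one in \cite{KS}. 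Composing it with the Kodaira--Spencer isomorphism of Theorem~\ref{thm:KS} on $\widetilde{Y}_0(\gP)^{\tor}$, its level-$U$ analogue over $\widetilde{Y}^{\tor}$ (\S\ref{ss:KS}), and the extension to $\widetilde{Y}_0(\gP)^{\tor}$ of the canonical isomorphism $\widetilde{\pi}_2^*\widetilde{\delta}\isoto\widetilde{\pi}_1^*\widetilde{\delta}$ (multiplication by $\Nm(\gP)^{-1}$) yields $\widetilde{\st}:\widetilde{\pi}_{1,*}\widetilde{\pi}_2^*\widetilde{\omega}\to\widetilde{\omega}$, which restricts over $\widetilde{Y}$ to \cite[(51)]{KS}. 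Since every sheaf in this construction is locally free over $\CO_{\widetilde{Y}^{\tor}}$ (using the trivialization $\xi^*\widetilde{\omega}\cong\bigwedge\nolimits^d(I_1^{-1})\otimes\CO_{\widehat{S}}$ and the vanishing/local-freeness for $\CO_{\widetilde{Y}_0(\gP)^{\tor}}$), formation of these pushforwards commutes with base change along $\CO\to R$, so one obtains $\widetilde{\st}_R$ over $\widetilde{Y}^{\tor}_R$, which by construction restricts over $\widetilde{Y}_R$ to the pull-back of \cite[(51)]{KS}.

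\textbf{Step 2: reduce the completion statement to a toric trace.} By the Theorem on Formal Functions and faithful flatness of completion, it suffices to identify the pull-back of $\widetilde{\st}_R$ under $\xi:\widehat{S}\to\widetilde{Y}^{\tor}$ for each cusp $[\utH_1]\in\widetilde{C}$ (the $(\CO_F^\times\cap U)^2$-quotient being harmless). The cusps of $\widetilde{C}_0(\gP)$ above $[\utH_1]$ are indexed by the $2^t$ factorizations $\gP=\gq_1\gq_2$ (with $t=\#\{\gp\mid\gP\}$, $\alpha(I_1)=\gq_2I_2$, $\alpha(J_1)=\gq_1J_2$), giving a decomposition of $\xi^*(\widetilde{\pi}_{1,*}\widetilde{\pi}_2^*\widetilde{\omega})$ into summands $\bigwedge\nolimits^d(I_2^{-1})\otimes\zeta_*\CO_{\widehat{S}_0(\gP)}$, where $\zeta:\widehat{S}_0(\gP)\to\widehat{S}$ is the morphism over $\widetilde{\pi}_1$. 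By \cite[Prop.~3.2.2]{KS} the pull-back of $\widetilde{\st}$ is the sum over these summands of $\beta\otimes\Nm(\gq_1)^{-1}\tr$, where $\tr$ is the trace of the finite flat extension $\zeta_*\CO_{\widehat{S}_0(\gP)}\to\CO_{\widehat{S}}$ and $\beta=\Nm(\gq_2)^{-1}\bigwedge\nolimits^d\alpha^*$ is an isomorphism $\bigwedge\nolimits^d(I_2^{-1})\isoto\bigwedge\nolimits^d(I_1^{-1})$.

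\textbf{Step 3: compute the toric trace and base-change.} Completing once more, $\zeta$ is modelled by the finite flat toric morphism $h:T_{M,\sigma}\to T_{M_1,\sigma}$ of degree $[J_2:\alpha(J_1)]=\Nm(\gq_1)$, with $M=\gd^{-1}I_1^{-1}J_2$, $M_1=\gd^{-1}I_1^{-1}J_1$, and $\alpha$ identifying $M_1$ with $\gq_1M\subset M$; on global sections $\CO[[q^m]]_{m\in N^{-1}M_+\cup\{0\}}$ is free over $\CO[[q^m]]_{m\in N^{-1}M_{1,+}\cup\{0\}}$ (via $q^m\mapsto q^{\alpha(m)}$) with basis the monomials indexed by $M/\alpha(M_1)$, so multiplication by $q^m$ fixes no coset unless $m\in\alpha(M_1)$ and therefore $\tr(q^m)=\Nm(\gq_1)q^{\alpha^{-1}(m)}$ for $m\in\alpha(N^{-1}M_1)$ and $\tr(q^m)=0$ otherwise. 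Hence $\Nm(\gq_1)^{-1}\tr$ is the map $\sum r_mq^m\mapsto\sum r_{\alpha(m)}q^m$, and tensoring with $\beta$ on the $\bigwedge\nolimits^d$-factor gives $b\otimes\sum r_mq^m\mapsto\beta(b)\otimes\sum r_{\alpha(m)}q^m$ over $\CO$; since $h$ is flat over $\CO$, the same identity holds after $\otimes_\CO R$. The step I expect to be the main obstacle is the normalization bookkeeping in Step~2: verifying, via \cite[Prop.~3.2.2]{KS}, that the factors $\Nm(\gq_1)^{-1}$, $\Nm(\gq_2)^{-1}$ and $\Nm(\gP)^{-1}$ propagate consistently through the Kodaira--Spencer identifications, so that the saving trace restricts on each summand to precisely $\beta\otimes\Nm(\gq_1)^{-1}\tr$ with no surviving monomials $q^m$, $m\notin\alpha(M_1)$.
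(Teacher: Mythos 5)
Your argument is correct and is essentially the paper's own proof: the trace morphism coming from Theorem~\ref{thm:vanish} and duality, combined with Theorem~\ref{thm:KS}, its level-$U$ analogue and the $\Nm(\gP)^{-1}$-normalized identification $\widetilde{\pi}_2^*\widetilde{\delta} \isoto \widetilde{\pi}_1^*\widetilde{\delta}$, base change via local freeness and the trivialization of $\xi^*\widetilde{\omega}$, then the decomposition of $\xi^*(\widetilde{\pi}_{1,*}\widetilde{\pi}_2^*\widetilde{\omega})$ into the $2^t$ summands indexed by $\gP = \gq_1\gq_2$ together with \cite[Prop.~3.2.2]{KS}, and finally the toric trace for $h:T_{M,\sigma} \to T_{M_1,\sigma}$. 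The only places you are looser than the paper are that the Kodaira--Spencer isomorphisms involve $\CK(\widetilde{Z})$, so the composite first yields a morphism of sheaves twisted by the cuspidal ideal sheaves which must be untwisted via $\widetilde{\pi}_1^*\CI_{\widetilde{Z}} \to \CI_{\widetilde{Z}_0(\gP)}$, and that the monomial-basis freeness is cleanest verified on the toric charts (or the group algebra) before completing rather than directly on the completed global-section rings with their positivity constraint; both are routine details rather than gaps.
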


\section{Hecke operators on $q$-expansions} \label{sec:Tp}
Recall that in \cite{theta}, we computed the effect on $q$-expansions of all the weight-shifting operators defined there, namely partial $\Theta$ and Frobenius operators.\footnote{and for completeness, the simpler and well-known effect of multiplication by partial Hasse invariants} We do the same here for Hecke operators.  

For Hecke operators associated to primes not dividing $p$, it is well-known that the same computation as for classical Hilbert modular forms, i.e. the case $R = \C$, carries over to arbitrary bases.  This is explained in \cite{DS1} in the case that $p$ is unramified in $F$, and we do the same here in general for completeness.  

Hecke operators at primes dividing $p$ are also considered in \cite{DS1}, but in addition to the assumption that $p$ be unramified in $F$, constraints are placed on the weight to allow for a simpler more ad hoc definition of the operator $T_\gp$.  A more general, indeed optimal, construction of $T_\gp$ is given in \cite{KS} (see also \cite{FP} for $p$ unramified in $F$ and \cite{ERX} for work in this direction allowing ramfication at $p$).  The effect of $T_\gp$ on $q$-expansions is also considered in \cite{DW} and \cite{DMPhD}, premised on partial results in \cite{ERX}; we complete the analysis here using the operators defined in \cite{KS}.

\subsection{Hilbert modular forms} \label{ss:hmfs}
For $\vec{k}, \vec{m} \in \Z^\Theta$, a Noetherian $\CO$-algebra $R$ and (sufficiently small) $U$ such that $\chi_{\vec{k}+2\vec{m},R} = 1$ on $\CO_F^\times \cap U$, we define the space of {\em Hilbert modular forms of weight $(\vec{k},\vec{m})$ and level $U$ over $R$} to be
$$M_{\vec{k},\vec{m}}(U;R) := H^0(Y_R,\CA_{\vec{k},\vec{m},R})
   = H^0(Y_R^{\min},j_* \CA_{\vec{k},\vec{m},R}).$$
Note that the hypotheses imply that either
\begin{itemize}
\item $p^nR = 0$ for some $n > 0$, or
\item $k_\theta + 2m_\theta$ is independent of $\theta$.
\end{itemize}
{\em We assume henceforth that one of these holds.}  Note that in either case $M_{\vec{k},\vec{m}}(U;R)$ is defined for all sufficiently small $U$ (containing $\GL_2(\CO_{F,p})$), so we may define
$$M_{\vec{k},\vec{m},R} : = \lim\limits_{\longrightarrow_U}
  M_{\vec{k},\vec{m}}(U;R),$$
where the limit is taken over all such $U$ with respect to the (injective) morphisms $M_{\vec{k},\vec{m}}(U;R) \to M_{\vec{k},\vec{m}}(U';R)$ for $U' \subset U$.

Recall that for $g \in \GL_2(\A_{F,\f}^{(p)})$ and $U' \subset gUg^{-1}$ (where $U$ and $U'$ are sufficiently small), we have the morphisms $M_{\vec{k},\vec{m}}(U;R) \to M_{\vec{k},\vec{m}}(U';R)$ defined as $\| \det(g) \|$ times the composite
$$H^0(Y_R,\CA_{\vec{k},\vec{m},R}) \longrightarrow
 H^0(Y'_R,\rho_g^*\CA_{\vec{k},\vec{m},R}) \longrightarrow
 H^0(Y'_R,\CA'_{\vec{k},\vec{m},R}),$$
the transition maps in the definition of $M_{\vec{k},\vec{m},R}$
being the special case where $g = 1$.
These satisfy the usual compatibilities and hence define an action of $\GL_2(\A_{F,\f}^{(p)})$ on $M_{\vec{k},\vec{m},R}$.
Furthermore since $M_{\vec{k},\vec{m}}(U';R)^{U/U'}$ coincides with $M_{\vec{k},\vec{m}}(U;R)$ for any normal subgroup $U'$ of $U$ (whenever both have already been defined), we may extend the definition to arbitrary open compact $U = U^p\GL_2(\CO_{F,p})$ by letting $M_{\vec{k},\vec{m}}(U;R) = M_{\vec{k},\vec{m},R}^{U^p}$.
Note also that the definition of $M_{\vec{k},\vec{m},R}$ is functorial in $R$, so that $\CO$-algebra morphisms $R \to R'$ give rise to $\GL_2(\A_{F,\f}^{(p)})$-equivariant $R'$-linear morphisms
$M_{\vec{k},\vec{m},R} \otimes_R R' \to M_{\vec{k},\vec{m},R'}$; moreover this is an isomorphism whenever $R'$ is flat over $R$, or indeed if both are flat over $\CO$.

We remark also that obvious variants of the above construction yield analogous assertions with $M_{\vec{k},\vec{m}}(U;R)$ replaced by $M^{(i)}_{\vec{k},\vec{m}}(U_0(\gP);R) := 
H^0(Y_0(\gP)_R,\CA_{\vec{k},\vec{m},R}^{(i)})$ (for $i=1,2$) or
$M^{\flat}_{\vec{k},\vec{m}}(U_1(\gP);R) := 
H^0(Y_1(\gP)_R,\CA_{\vec{k},\vec{m},R}^{\flat})$, and we denote the resulting limits $M_0(\gP)^{(i)}_{\vec{k},\vec{m},R}$ and $M_1(\gP)^{\flat}_{\vec{k},\vec{m},R}$.  Note also that we have $\GL_2(A_{F,\f}^{(p)})$-equivariant injections
$M_{\vec{k},\vec{m},R} \hookrightarrow 
M_0(\gP)^{(i)}_{\vec{k},\vec{m},R}$ induced by the morphisms $\pi_i$,
and a natural $\GL_2(A_{F,\f}^{(p)})$-equivariant 
action of $(\CO/\gP)^\times$
on $M_1(\gP)^{\flat}_{\vec{k},\vec{m},R}$ under which the
invariants may be identified with 
$M_0(\gP)^{(1)}_{\vec{k},\vec{m},R}$.

\subsection{The $q$-expansion Principle} \label{ss:qep}
For each cusp $c = [H,I,[\lambda],[\eta]] \in C = C_U$, we have the $q$-expansion map
$$\bq_c:  M_{\vec{k},\vec{m}}(U;R)  \to (j_*\CA_{\vec{k},\vec{m},R})^{\wedge} \hookrightarrow
D_{\vec{k},\vec{m}} \otimes_\CO R[[q^m]]_{m \in N^{-1}M_+ \cup \{0\}}, $$ where the completion is along $Z_{c,R}$ and the inclusion depends on a choice of splitting $H \isoto J \times I$.  The notation here is as in \S\ref{ss:KP}, so in particular 
$M = \gd^{-1}I^{-1}J$ and $D_{\vec{k},\vec{m}} = 
(I^{-1})_\theta^{\otimes k_\theta} 
   \otimes (\gd(IJ)^{-1})_\theta^{\otimes m_\theta}$ depend on $c$, and $N$ is such that $U(N) \subset U$ and $\zeta_N \in \CO$.
The above definition of $\bq_c$ assumes a priori that $U$ is sufficiently small, but we may drop this assumption by letting $\bq_c(f) = \bq_{c'}(f)$ for any $c' \in C_{U(N)}$ in the preimage of $c$ under the natural projection.  The resulting $q$-expansion is independent of the choice of such a $c'$ (as a special case of 
Proposition~\ref{prop:koecher}(3)); furthermore the definition of $\bq_c$ is independent (in the obvious sense) of the choice of $N$ such that $U(N) \subset U$.

More generally for any $S \subset C$, we define
$$\bq_S : M_{\vec{k},\vec{m}}(U;R) \,\, \longrightarrow \,\, \bigoplus_{c \in S} \left( D_{\vec{k},\vec{m}} \otimes_\CO R[[q^m]]_{m \in N^{-1}M_+ \cup \{0\}} \right) $$
as the direct product of the maps $\bq_c$.
We then have the following $q$-expansion Principle, which can be proved exactly as in \cite[Thm.~6.7]{rap} if $U$ is sufficiently small; the assumption can then removed by applying the result to sufficiently small $U'$ normal in $U$ and taking invariants under $U/U'$.
\begin{proposition} \label{prop:qexp}
Suppose that $S$ meets every connected component of $Y^{\min}$, or  equivalently, the restriction to $S$ of the map
$$B(F)_+ \backslash \GL_2(\A_{F,\f}) / U \,\, \stackrel{\det}{\longrightarrow} \,\,
F^\times_+ \backslash \A_{F,\f}^\times / \det(U)$$
is surjective.
Then
\begin{enumerate}
\item $\bq_S$ is injective;
\item if $R' \subset R$, $f \in M_{\vec{k},\vec{m}}(U;R)$ and 
$$\bq_S(f)\,\, \in \,\, \bigoplus_{c \in S} \left( D_{\vec{k},\vec{m}} \otimes_\CO R'[[q^m]]_{m \in N^{-1}M_+ \cup \{0\}} \right),$$
then $f \in M_{\vec{k},\vec{m}}(U;R')$.
\end{enumerate}
\end{proposition}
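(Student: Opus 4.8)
The plan is to follow the proof of \cite[Thm.~6.7]{rap}, in two reductions: first to the case of small level, then to a statement on a toroidal compactification. First I would reduce to $U$ sufficiently small: prove the proposition for $U = U(N)$ with $N \ge 3$ prime to $p$ and $\zeta_N \in \CO$, and then deduce it for general sufficiently small $U$ by choosing $U' = U(N) \triangleleft U$ and passing to $U/U'$-invariants. Here one uses that the surjection $C_{U'} \to C_U = C$ is compatible with the determinant maps, so that the preimage $S'$ of $S$ still meets every connected component; that $M_{\vec k,\vec m}(U;R)$ is identified with $M_{\vec k,\vec m}(U';R)^{U/U'}$ (as in \S\ref{ss:hmfs}); and that $\bq_{S'}$ restricted to this subspace recovers $\bq_S$ up to the (immaterial) choices of $N$ and of splittings. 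Both assertions for $U$ then follow from those for $U'$.

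\emph{Injectivity for $U = U(N)$.} By the Koecher Principle (cf.~\S\ref{ss:Ymin}), $M_{\vec k,\vec m}(U;R) = H^0(Y_R^{\min}, j_*\CA_{\vec k,\vec m,R})$ is identified with $H^0(Y_R^{\tor}, \CA_{\vec k,\vec m,R}^{\tor})$, the global sections of a line bundle on the toroidal compactification, and $\bq_c(f)$ is the image of $f$ in the completion of $Y_R^{\tor}$ along the boundary divisor $Z_c^{\tor}$ over $Z_c$, written as a power series via the explicit description of this completion (\S\ref{ss:tYtor}, \cite[(22)]{theta}). As noted in the statement, the hypothesis on $S$ is equivalent to surjectivity of $S \to F^\times_+\backslash\A_{F,\f}^\times/\det(U)$, and the target is $\pi_0(Y^{\min})$ via the determinant; hence each connected component $W$ of $Y_R^{\tor}$ contains some $Z_c^{\tor}$ with $c \in S$. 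The Tate (Mumford) object underlying the identification above, viewed over the fraction field of the completed ring $R[[q^m]]_{m \in N^{-1}M_+}$ (for $R$ a domain), defines a point of $Y_R$ which is the generic point of $W$ --- its $q$-coordinates being algebraically independent, their number $d=[F:\Q]$ being the relative dimension of $W$ over $R$. Therefore $\bq_c(f) = 0$ forces $f$ to vanish at the generic point of $W$, hence on $W$; since $S$ meets every component, $f = 0$. For general Noetherian $R$ this is reduced to the case of a domain by a routine d\'evissage, exactly as in \cite{rap}.

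\emph{Descent of coefficients.} Given $\CO \subseteq R' \subseteq R$ and $f$ as in (2), I would use that $Y^{\tor}$ is $\CO$-flat and $\CA^{\tor}_{\vec k,\vec m}$ is a line bundle on it, hence $\CO$-flat, so that tensoring $0 \to R' \to R \to R/R' \to 0$ with $\CA^{\tor}_{\vec k,\vec m}$ over $\CO$ and taking global sections gives a left-exact sequence
$$0 \to M_{\vec k,\vec m}(U;R') \to M_{\vec k,\vec m}(U;R) \to H^0(Y^{\tor}, \CA^{\tor}_{\vec k,\vec m}\otimes_{\CO} R/R')$$
compatible with the $q$-expansions at every cusp. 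The image of $f$ on the right has trivial $q$-expansion along $S$, so vanishes by the argument of the previous paragraph with the coefficient module $R/R'$ in place of a ring (the Tate-object argument is insensitive to this change); thus $f \in M_{\vec k,\vec m}(U;R')$.

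The hard part is the injectivity in the second step, and within it the two points that require care: translating ``$S$ meets every component'' into the geometric assertion that the boundary strata indexed by $S$ meet every component of $Y_R^{\tor}$ (handled via the $\pi_0$-description through the determinant), and establishing that the Tate object at a cusp reaches the generic point of its component over an \emph{arbitrary} Noetherian $\CO$-algebra, which is exactly where one falls back on the reduction to nice $R$ as in \cite[Thm.~6.7]{rap}.
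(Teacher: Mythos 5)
Your proposal is correct and follows essentially the same route as the paper, which simply observes that the case of sufficiently small $U$ is proved exactly as in \cite[Thm.~6.7]{rap} and that the general case follows by passing to $U/U'$-invariants for a normal subgroup $U' = U(N) \subset U$. The details you supply (components detected via the determinant, the Tate-object/irreducibility argument on the toroidal compactification, and the flatness/left-exactness argument with coefficients in $R/R'$) are precisely the content of Rapoport's proof being invoked, so there is nothing genuinely different to compare.
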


More generally for any cusp $c \in C_0(\gP)$, we may define $q$-expansion maps
$$\bq^{(i)}_c:  M^{(i)}_{\vec{k},\vec{m}}(U_0(\gP);R)  \to
D^{(i)}_{\vec{k},\vec{m}} \otimes_\CO R[[q^m]]_{m \in N^{-1}M_+ \cup \{0\}}$$
for $i=1,2$, where now $M = \gd^{-1}I_1^{-1}J_2$, and more generally $\bq^{(i)}_S$ for any $S \subset C_0(\gP)$.  
However since irreducible components of $Y_0(\gP)_R^{\min}$ need not contain cusps, the analogue of Proposition~\ref{prop:qexp} fails.

Recall also from Proposition~\ref{prop:Y1pcomp}(1) that connected components of the complement of $Y_1(\gP)$ in $Y_1(\gP)^{\min}$ correspond to clasps $c\ell = [\underline{H},\Xi] \in C_{1/2}(\gP)$, and by Proposition~\ref{prop:Y1pqexp}, we have $q$-expansion maps
$$\bq_{c\ell}:  M^{\flat}_{\vec{k},\vec{m}}(U_1(\gP);R)  \to
D_{\vec{k},\vec{m}}\otimes_\CO T_{\gor} \otimes_\CO R[[q^m]]_{m \in (\gq^{-1}N^{-1}M)_+ \cup \{0\}},$$
where $M = \gd^{-1}I^{-1}J$, $\gq = \Ann_{\CO_F}(\Xi)$,
$\gor = \gq^{-1}\gP$ and $T_{\gor}$ is the finite flat $\CO$-algebra representing generators of $\mu_p \otimes I/\gor I$.

The inclusions
$M_{\vec{k},\vec{m}}(U;R) \stackrel{\pi_i^*}{\longrightarrow} M^{(i)}_{\vec{k},\vec{m}}(U_0(\gP);R)$
have the obvious effect on $q$-expansions.  More precisely
$\bq_c^{(i)}(\pi_i^*(f))$ is the image of $\bq_{\pi_i^\infty(c)}(f)$ under the identification of $D_{\vec{k},\vec{m}}^{(i)}$ (for $c = [\underline{H}_1,\underline{H}_2,\alpha]$) with $D_{\vec{k},\vec{m}}$ (for $\pi_i^\infty(c) = [\underline{H}_i]$) and the inclusion of power series rings obtained from the maps
$I_1 \hookrightarrow I_i$ and $J_i \hookrightarrow J_2$
(one of which is the identity and the other is induced by $\alpha$; see Theorem~\ref{thm:Y0min}(3)).  

We may similarly describe the effect of
$M^{(1)}_{\vec{k},\vec{m}}(U_0(\gP);R)\stackrel{h^*}{\longrightarrow} M^{\flat}_{\vec{k},\vec{m}}(U_1(\gP);R)$
on $q$-expansions.  Indeed recall that
$c\ell = [\underline{H},\Xi] \in C_{1/2}(\gP)$ maps to $c =[\underline{H}_1,\underline{H}_2,\alpha]$, where $\underline{H}_1 = \underline{H}$ and $\gq^{-1}J \isoto J_2$ (via $\alpha$).
We may thus identify $D_{\vec{k},\vec{m}}^{(i)}$ (resp.~$M$)
for $c$ with $D_{\vec{k},\vec{m}}$ (resp.~$\gq^{-1}M$)
for $c\ell$, and $\bq_{c\ell}(h^*(f))$ is obtained from $\bq_{c}(f)$ by tensoring with the structure map $\CO \to T_\gor$.

The effect on $q$-expansions of the action of $d \in (\CO/\gP)^\times$ is given simply by 
$$\bq_{[\underline{H},\Xi]}(d^*f) = d_{\gor}^*(\bq_{[\underline{H},d\Xi]}(f)),$$ where $d_{\gor}^*$ is defined
by the action of $(\CO/\gP)^\times$ on $T_\gor$; thus if $t = \sum t^\xi$ (in the notation of Proposition~\ref{prop:Y1pcomp}(2)),
then $d_{\gor}^*t = \sum \xi(d)t^\xi$.

\subsection{Cusp forms} \label{ss:cuspforms}
For $c \in C$, we let
$$\e_c: M_{\vec{k},\vec{m}}(U;R) \,\,\longrightarrow \,\, D_{\vec{k},\vec{m}} \otimes_{\CO} R$$
denote the $R$-linear (evaluation) map sending $f$ to the constant coefficient
of $\bq_c(f)$, and we similarly define
$$\e_S: M_{\vec{k},\vec{m}}(U;R) \,\,\longrightarrow \,\,\bigoplus_{c\in S}( D_{\vec{k},\vec{m}} \otimes_{\CO} R )$$
for any $S \subset C$.  We define the space of
{\em cuspidal Hilbert modular forms of weight $(\vec{k},\vec{m})$ and level $U$ over $R$} to be
$S_{\vec{k},\vec{m}}(U;R) : = \ker(\e_C)$.
Thus $f \in M_{\vec{k},\vec{m}}(U;R)$ is cuspidal if
$\e_c(f) = 0$ for all $c \in C$, in which case we
also refer to $f$ as a {\em cusp form} (of 
weight $(\vec{k},\vec{m})$ and level $U$ over $R$).

Note that $\e_c$ is independent of the choice of splittings in the definition of $\bq_c$, and that it takes values in
$$(D_{\vec{k},\vec{m}} \otimes_{\CO} R)^{\Gamma_{\calC,U}}
 = D_{\vec{k},\vec{m}} \otimes_{\CO} \ga,$$
where $\ga = R[b]$ and $b \in \CO$ generates the ideal
$$ \left\langle \, \chi_{\vec{m}}(\alpha) \chi_{\vec{k}+\vec{m}}(\delta) - 1 \,\left|\,\smat{\alpha}{\beta}{0}{\delta} \in \Gamma_{\calC,U}\,\right.\right\rangle.$$
In particular, if $R$ is flat over $\CO$, then $\ga = 0$ for all cusps $c \in C$, and hence $S_{\vec{k},\vec{m}}(U;R) = M_{\vec{k},\vec{m}}(U;R)$, unless $\vec{k}$ and $\vec{m}$ are parallel in the sense that the pair $(k_\theta,m_\theta)$ is independent of $\theta$.  On the other hand if 
$(k_\theta,m_\theta)$ is independent of $\theta$, or if $p^nR = 0$
for some $n > 0$, then for sufficiently small $U$, we have $\ga = R$ for all $c \in C$.

Note that $f \in M_{\vec{k},\vec{m}}(U;R)$ is cuspidal if and only if its image in $M_{\vec{k},\vec{m}}(U';R)$ for some (hence all) $U' \subset U$.  Furthermore 
Proposition~\ref{prop:koecher}(3) (in the case $\gP = \CO_F$)
implies that the subspace
$$S_{\vec{k},\vec{m},R} := \lim\limits_{\longrightarrow_U} S_{\vec{k},\vec{m}}(U;R)
\subset M_{\vec{k},\vec{m},R} $$
is stable under the action of $\GL_2(\A_{F,\f}^{(p)})$, and
that $S_{\vec{k},\vec{m}}(U;R) = S_{\vec{k},\vec{m},R}^{U^p}$
for all $U = U^p\GL_2(\CO_{F,p})$.   Note also that the definition of $S_{\vec{k},\vec{m},R}$ is functorial in $R$, and that
$S_{\vec{k},\vec{m},R'} = S_{\vec{k},\vec{m},R} \otimes_R R'$ if $R'$ is flat over $R$ (or both are flat over $\CO$).

More generally for $c \in C_0(\gP)$, we have the evaluation maps
$$\e_c^{(i)}: M_{\vec{k},\vec{m}}^{(i)}(U_0(\gP);R)
  \longrightarrow D^{(i)}_{\vec{k},\vec{m}} \otimes_\CO R,$$
and we similarly define $\e_S^{(i)}$ for $S \subset C_0(\gP)$, and let $S_{\vec{k},\vec{m}}^{(i)}(U_0(\gP);R) = \ker(\e_{C_0(\gP)}^{(i)})$.  For $c\ell \in C_{1/2}(\gP)$ the evaluation maps take the form
$$\e_{c\ell}: M_{\vec{k},\vec{m}}^{\flat}(U_1(\gP);R)
  \longrightarrow D_{\vec{k},\vec{m}} \otimes_\CO T_\gor \otimes_\CO R$$
and $\e_S$ for $S \subset C_{1/2}(\gP)$, and we let
$S_{\vec{k},\vec{m}}^{\flat}(U_1(\gP);R) = \ker(\e_{C_{1/2}(\gP)})$.  The same considerations as above yield $\GL_2(\A_{F,\f}^{(p)})$-submodules
$$S_0(\gP)_{\vec{k},\vec{m},R}^{(i)} \subset M_0(\gP)_{\vec{k},\vec{m},R}^{(i)} 
 \quad\mbox{and} 
\quad S_1(\gP)_{\vec{k},\vec{m},R}^{\flat}\subset
M_1(\gP)_{\vec{k},\vec{m},R}^{\flat} 
 $$
for which assertions analogous to those for $S_{\vec{k},\vec{m},R}$ hold.  Furthermore $S_{\vec{k},\vec{m},R}$ is the preimage of $S_0(\gP)_{\vec{k},\vec{m},R}^{(i)}$ under the injection $M_{\vec{k},\vec{m},R} \hookrightarrow M_0(\gP)_{\vec{k},\vec{m},R}^{(i)}$, and the action of $(\CO/\gP)^\times$ on 
$M_1(\gP)_{\vec{k},\vec{m},R}^{\flat}$ restricts to one on
$S_1(\gP)_{\vec{k},\vec{m},R}^{\flat}$ with invariants
$S_0(\gP)_{\vec{k},\vec{m},R}^{(1)}$.

\subsection{Cusps at $\infty$} \label{ss:cuspsinf}
We call $c \in C = C_U$ a {\em cusp at $\infty$} if it is of the form
$$c_t := B(\CO_{F,(p)})_+\smat{t}{0}{0}{1}U^p $$
for some $t \in (\AA_{F,\f}^{(p)})^\times$.
Thus the data $\underline{H} = (H,I,[\lambda],[\eta])$ corresponding to $c_t$ is given by 
\begin{itemize}
\item $H = J \times I$, where $I = \CO_F$ and $J = J_t = t^{-1}\widehat{\CO}_F \cap F$,
\item $\lambda$ is the identification $J_{(p)} = \CO_{F,(p)}$
  (or equivalently, multiplication by any element of $\CO_{F,(p),+}^\times$),
\item $\eta$ is defined by $\eta(x,y) = (t^{-1}x,y)$.
\end{itemize}
Note that Proposition~\ref{prop:qexp} holds with $S$ as the set of cusps at $\infty$.

For simplicity, we assume throughout this section $U = U_1(\gn)$ or $U(\gn)$ for some $\gn$ prime to $p$.
We let $V_{\gn} = \ker((\widehat{\CO}_{F}^{(p)})^\times \to (\CO_F/\gn)^\times)$ and 
$$E_\gn = \CO_F^\times \cap V_{\gn}
 = \{\,\mu\in \CO_F^\times\,|\, \mu \equiv 1\bmod \gn\,\}.$$
Note that in the case of $U_1(\gn)$, we have $c_t = c_{t'}$ if and 
only if $t$ and $t'$ define the same strict ideal class of $F$.
Similarly in the case of $U(\gn)$, the cusps at $\infty$ are indexed by the strict ray class group of conductor $\gn$.  Note that in either case, there is a unique cusp at $\infty$ on each connected component of $Y^{\min}$.

Recall that $R$ is a Noetherian $\CO$-algebra and $\vec{k},\vec{m} \in \Z^\Theta$ are such that either $p^nR = 0$ for some $n >0$,
or $k_\theta + 2m_\theta$ is independent of $\theta$. 
If  $U= U_1(\gn)$ and $f \in M_{\vec{k},\vec{m}}(U;R)$, then
Proposition~\ref{prop:koecher}(2) implies
(for sufficiently small $\gn$, and hence for any $\gn$) 
that the $q$-expansion\footnote{We always implicitly choose the splitting defined by the equality $H = J \times I$}
$\bq_{c_t}(f)$ takes values in the $P_t$-module $Q_t = $
$$\left \{\, \left. \sum_{m \in (\gd^{-1}J)_+ \cup \{0\}}\!\!\!\! \!\!\!\!\!\!  b \otimes r_mq^m \,
\right|\,
\mbox{$r_m = \chi_{\vec{m},R}(\nu) r_{\nu m}$ for all
$\nu \in \CO_{F,+}^\times$, $m  \in (\gd^{-1}J)_+ \cup \{0\}$}\,\right\},$$
where 
$P_t = R[[q^m]]_{m \in (\gd^{-1}J)_+\cup \{0\}}^{\CO_{F,+}^\times}$ and $b = b_t$ is any basis for 
$$D_{\vec{k},\vec{m},t} = \bigotimes_{\theta \in \Theta} (\gd J_t^{-1} \otimes_{\CO_F,\theta} \CO)^{\otimes m_\theta}.$$
Similarly if $U = U(\gn)$, then the same assertions hold with $J$ replaced by $\gn^{-1}J$ and $\CO_{F,+}^\times$ by $E_{\gn,+}$ in the definitions of $P_t$ and $Q_t$ (but the same $D_{\vec{k},\vec{m},t}$).

Note that if $U = U_1(\gn)$ (resp.~$U(\gn)$), then $c_t = c_{t'}$
if and only if $t' = \alpha t u$ for some $\alpha \in \CO_{F,(p),+}^\times$ and $u \in (\widehat{\CO}_F^{(p)})^\times$ (resp.~$V_\gn$).
The resulting isomorphism $P_t \isoto P_{t'}$ defined by $q^m \mapsto q^{\alpha m}$ is independent of the choice of such an $\alpha$. Similarly so is the resulting isomorphism $Q_t \isoto Q_{t'}$, which is given by the map defined by $q^m \mapsto q^{\alpha m}$ and the isomorphism $D_{\vec{k},\vec{m},t} \isoto D_{\vec{k},\vec{m},t'}$ 
induced by multiplication by $\alpha^{-1}$ on $J_t^{-1}$.
For $f \in M_{\vec{k},\vec{m}}(U;R)$ and $m \in (\gd^{-1}J_t)_+ \cup \{0\}$ (resp.~ $(\gd^{-1}\gn^{-1}J_t)_+ \cup \{0\}$), we write $r_m^t(f)$ for the coefficient of $q^m$ in $\bq_{c_t}(f)$, viewed as an element of $D_{\vec{k},\vec{m},R} := D_{\vec{k},\vec{m},1}\otimes_{\CO} R$ via the isomorphism $D_{\vec{k},\vec{m},t} \isoto D_{\vec{k},\vec{m},1}$ induced by the identification $J_{(p)} \cong \CO_{F,(p)}$.  Note that
if $t' = \alpha t u$ for some $\alpha$ and $u$ as above, then
$J_{t'} = \alpha^{-1}J_t$ and
\begin{equation}\label{eqn:rmt} r_m^{t'}(f) = \chi_{\vec{m},R}(\alpha) r_{\alpha m}^{t}(f)\end{equation}
for all $m \in (\gd^{-1}J_{t'})_+\cup\{0\}$
(resp.~$(\gd^{-1}\gn^{-1}J_{t'})_+\cup\{0\}$).

\subsection{Hecke operators outside $p$} \label{ss:Tv}
We continue to assume that $U$ is of the form $U_1(\gn)$ or $U(\gn)$ (for some $\gn$ prime to $p$).
Recall from \S\ref{ss:hmfs} that $M_{\vec{k},\vec{m},R}$ is equipped with an action of $\GL_2(\AA_{F,\f}^{(p)})$ such that
$M_{\vec{k},\vec{m}}(U;R) = M_{\vec{k},\vec{m},R}^U$.
We may therefore define commuting $R$-linear Hecke operators $T_v$
on $M_{\vec{k},\vec{m}}(U;R)$,
for all primes $v\nmid p$ (resp.~$v\nmid\gn p$) of $\CO_F$
if $U = U_1(\gn)$ (resp.~$U(\gn)$),
by the action of the double cosets
$$T_v = \left[ U\smat{\varpi_v}{0}{0}{1} U\right],$$
where $\varpi_v$ is any uniformizer of $\CO_{F,v}$ (viewed as an element of $F_v^\times \subset (\A_{F,\f}^{(p)})^\times$).
Furthermore it follows from the stability of $S_{\vec{k},\vec{m},R}$ under the action of $\GL_2(\A_{F,\f}^{(p)})$ 
(see \S\ref{ss:cuspforms}) that
the operators $T_v$ restrict to endomorphisms of 
$S_{\vec{k},\vec{m}}(U;R) =  S_{\vec{k},\vec{m},R}^U$.

Similarly we have the operators 
$$S_w = \left[ U\smat{\varpi_w}{0}{0}{\varpi_w} U \right]$$
on $M_{\vec{k},\vec{m}}(U;R)$ for all $w\nmid\gn p$, preserving $S_{\vec{k},\vec{m}}(U;R)$ and commuting with each other and the operators $T_v$.
Note that restricting the action of $\GL_2(\A_{F,\f}^{(p)})$ to its center defines an action of $(\A_{F,\f}^{(p)})^\times$ on $M_{\vec{k},\vec{m}}(U;R)$ whose restriction to $V_\gn$ (resp.~$\CO_{F,(p),+}^\times$) is trivial (resp.~$\chi_{\vec{k}+2\vec{m} - \vec{2},R}$),  where $V_\gn: = \ker((\widehat{\CO}_{F}^{(p)})^\times \to (\CO_F/\gn)^\times)$.  The action of the operator $S_v$ is simply that of $\varpi_v$ under this identification.

The effect of the operators $T_v$ on $q$-expansions at cusps at $\infty$ is then given as follows:
\begin{proposition} \label{prop:Tvonq} Suppose that $U = U_1(\gn)$ (resp.~$U(\gn)$) for some $\gn$ prime to $p$, and let $f \in M_{\vec{k},\vec{m}}(U;R)$.  
\begin{enumerate}
\item If $v$ is a prime of $\CO_F$
not dividing $\gn p$, then 
$$r_m^t(T_v f) = r_m^{\varpi_vt}(f) +  \Nm_{F/\Q}(v)r_m^{\varpi_v^{-1}t}(S_vf)$$
for all $t \in (\AA_{F,\f}^{(p)})^\times$ and 
$m \in (\gd^{-1}J_t)_+ \cup \{0\}$
(resp.~ $(\gd^{-1}\gn^{-1}J_t)_+ \cup \{0\}$).
\item If $U = U_1(\gn)$ and $v$ is a prime of $\CO_F$
dividing $\gn$, then 
$$r_m^t(T_v f) = r_m^{\varpi_vt}(f)$$
for all $t \in (\AA_{F,\f}^{(p)})^\times$ and 
$m \in (\gd^{-1}J_t)_+ \cup \{0\}$.
\end{enumerate}
\end{proposition}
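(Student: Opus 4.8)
The plan is to unwind the definition of $T_v$ from \S\ref{ss:Tv} in terms of the $\GL_2(\A_{F,\f}^{(p)})$-action of \S\ref{ss:hmfs}, reduce the computation to the effect of individual translates $\rho_g$ on $q$-expansions at cusps at infinity, and invoke Proposition~\ref{prop:koecher}(3) in the case $\gP = \CO_F$ (so that $\CA^{(i)}_{\vec{k},\vec{m},R}=\CA_{\vec{k},\vec{m},R}$) to evaluate the latter. Set $g_v = \smat{\varpi_v}{0}{0}{1}$. For $v\nmid\gn$ one has the coset decomposition
$$U g_v U = \Big(\coprod_{b}\,\smat{\varpi_v}{b}{0}{1}\,U\Big)\;\sqcup\;\smat{1}{0}{0}{\varpi_v}\,U,$$
with $b$ running over representatives for $\CO_{F,v}/\varpi_v\CO_{F,v}$ regarded as elements of $\GL_2(\A_{F,\f}^{(p)})$ supported at $v$, so that $T_v f = \|\det g_v\|\big(\sum_b (g_{v,b}\cdot f)+(h_v\cdot f)\big)$ where $g_{v,b}=\smat{\varpi_v}{b}{0}{1}$, $h_v=\smat{1}{0}{0}{\varpi_v}$ and $\|\det g_v\|=\Nm(v)^{-1}$ by the normalization of \S\ref{ss:hmfs}. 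When $v\mid\gn$ and $U=U_1(\gn)$ the coset $\smat{1}{0}{0}{\varpi_v}U$ does not occur and only the $\smat{\varpi_v}{b}{0}{1}U$ survive, which already exhibits the qualitative difference between parts (1) and (2).

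For each coset representative $g$ the plan is to read off, via Proposition~\ref{prop:koecher}(3) together with \eqref{eqn:rmt}, the $q$-expansion of $g\cdot f$ at a cusp at infinity $c_t$ in terms of that of $f$ at the cusp $\rho_g^\infty(c_t)$ obtained by right multiplication by $g$; as usual this is done after passing to a sufficiently fine auxiliary level $U(N)$ and checking independence of $N$. For $g=g_{v,b}$, since $v\nmid\gn$ the unipotent factor $\smat{1}{b}{0}{1}$ lies in $U$, so $\rho_{g_{v,b}}^\infty(c_t)=c_{\varpi_v t}$ for every $b$, and the induced map on completions is the root-of-unity twist $q^m\mapsto\zeta_N^{-\epsilon_b(Nm)}q^m$ of Proposition~\ref{prop:koecher}(3), where $\epsilon_b$ is the parameter attached to $g_{v,b}$ by that statement, depending $\ZZ$-linearly on $b$ through its reduction modulo $v$. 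For $g=h_v$, using $\smat{\varpi_v}{0}{0}{\varpi_v}=g_v h_v$ one factors $h_v\cdot f = g_v^{-1}\cdot(S_v f)$, so that $\rho_{h_v}^\infty(c_t)=c_{\varpi_v^{-1}t}$ (the central part being exactly $S_v$), the map on completions carries no root-of-unity factor (the relevant $\epsilon$ vanishing for a diagonal element), and the contribution is $r_m^{\varpi_v^{-1}t}(S_v f)$ modified only by the powers of $\varpi_v$ entering the identifications of the invertible $\CO$-modules $D_{\vec{k},\vec{m},t}$ and by the commensurability $\gd^{-1}J_{\varpi_v^{-1}t}=v\,\gd^{-1}J_t\subset\gd^{-1}J_t$, so that the $q^m$-coefficient is read as $0$ unless $m\in\gd^{-1}J_{\varpi_v^{-1}t}$, recovering the usual convention.

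It then remains to assemble the pieces and track the constants. Summing the twisted contributions of the $\smat{\varpi_v}{b}{0}{1}$ over $b\in\CO_{F,v}/\varpi_v\CO_{F,v}$ produces an additive character sum over the residue field $\CO_F/v$, equal to $\Nm(v)$ when the character $b\mapsto\zeta_N^{-\epsilon_b(Nm)}$ is trivial — which happens precisely when $m$ lies in the relevant sublattice of $\gd^{-1}J_{\varpi_v t}$ — and to $0$ otherwise; this factor $\Nm(v)$ cancels the prefactor $\|\det g_v\|=\Nm(v)^{-1}$, leaving exactly $r_m^{\varpi_v t}(f)$. Combining this with the $\smat{1}{0}{0}{\varpi_v}$ contribution, whose overall constant works out (after combining the $\varpi_v$-power from the $D$-module identification with $\|\det g_v\|$ and the normalization of $S_v$ in \S\ref{ss:Tv}) to the stated $\Nm_{F/\Q}(v)$, gives part (1); the absence of that coset when $v\mid\gn$ gives part (2). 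One checks along the way that the answer is independent of $N$ and of the choices of splittings of $0\to I\to H\to J\to 0$, and that specializing to $R=\C$ reproduces the classical formula for Hecke operators on Hilbert modular forms referred to in the introduction.

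The conceptual content here is essentially nil: the geometry is entirely supplied by Proposition~\ref{prop:koecher}(3) and the coset decomposition, and the character-sum collapse is routine. The one genuinely delicate point — where the proof must be written with care — is the bookkeeping of normalizations: the adelic norm factors $\|\det g\|$, the powers of $\varpi_v$ in the identifications $D_{\vec{k},\vec{m},t}\cong D_{\vec{k},\vec{m},t'}$ for the various $t$, and the commensurabilities among the lattices $\gd^{-1}J_t$, which are what make the subscript $m$ on the right-hand side refer, when out of range, to a vanishing coefficient. A subsidiary point is the reduction to representatives $b$ with $\smat{1}{b}{0}{1}\in U$, which is immediate here since $v\nmid\gn$ but must be invoked so that the targets $\rho_g^\infty(c_t)$ are literally cusps at infinity and \eqref{eqn:rmt} applies verbatim.
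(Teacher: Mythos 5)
Your proposal is correct and follows essentially the same route as the paper's proof: decompose $Ug_vU$ into the left cosets $\smat{\varpi_v}{b}{0}{1}U$ together with the diagonal coset (your $\smat{1}{0}{0}{\varpi_v}$ agrees with the paper's $\smat{0}{1}{\varpi_v}{0}$ modulo $U$ since $v\nmid\gn$), pass to an auxiliary finer level, read off each translate's effect on $q$-expansions at cusps at infinity via Proposition~\ref{prop:koecher}(3), identify the diagonal contribution with $S_v$ at the shifted cusp, and collapse the additive character sum over $\CO_F/v$ so that the resulting $\Nm_{F/\Q}(v)$ cancels $\|\det g_v\|=\Nm_{F/\Q}(v)^{-1}$. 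The normalization bookkeeping you defer (the ratio $\|\det g_\infty\|/\|\det h\|=\Nm_{F/\Q}(v)$ producing the coefficient of the $S_v$ term, and the lattice commensurabilities governing when $r_m^{\varpi_v^{-1}t}$ is read as zero) is exactly what the paper writes out explicitly, and it works out as you assert.
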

\begin{proof} Suppose first that $U = U(\gn)$.
To prove (1), we may replace $\gn$ by $\gm\gn$ for
any $\gm$ prime to $vp$ and hence assume $\gn$
is sufficiently small.  
By a standard double coset computation, we have
$T_v f = \sum g_\iota f$ where the sum is over $\iota \in
\PP^1(\CO_F/v)$, $g_\infty = \smat{0}{1}{\varpi_v}{0} \in \GL_2(F_v) \subset \GL_2(\A_{F,\f}^{(p)})$ and
$g_\iota = \smat{\varpi_v}{\widetilde{\iota}}{0}{1}$
for any lift $\widetilde{\iota} \in \CO_{F,v}$ of
$\iota \in \CO_F/v$.
We may therefore compute the $q$-expansion of $T_v(f)$ at $c_t$ by 
summing those of the $g_\iota(f) \in M_{\vec{k},\vec{m}}(U';R)$ at the cusp $c_t' \in C_{U'}$, where $U' = U(\gn v)$.

We now apply Proposition~\ref{prop:koecher}(3) (with $\gP = \CO_F$) to determine the effect of the map $g_{\iota}: M_{\vec{k},\vec{m}}(U;R)\to M_{\vec{k},\vec{m}}(U';R)$ on $q$-expansions.  
First note that under $\rho_{g_\iota}:Y'^{\min} \to Y^{\min}$, we have $\rho_{g_\infty}(c_t') = \rho_h(c_{\varpi_v^{-1} t})$ and
$\rho_{g_{\iota}}(c_t') = c_{\varpi_v t}$ otherwise,
where $\rho_h$ is the automorphism of $Y^{\min}$ induced by
$h= \smat{\varpi_v}{0}{0}{\varpi_v}$.  More precisely, we have
$$\smat{t}{0}{0}{1}g_{\infty}\in
\smat{\varpi_v^{-1} t}{0}{0}{1}hU
\quad\mbox{and}\quad\smat{t}{0}{0}{1}g_{\iota}\in
\smat{1}{\epsilon_\iota}{0}{1}
\smat{\varpi_v t}{0}{0}{1}U$$
for $\iota \in \CO_F/v$, where $\epsilon_\iota \in \gn J^{-1}$ is any lift of the class of $t_v\widetilde{\iota}$ in $t_v\CO_{F,v}/vt_v\CO_{F,v} \cong \gn J^{-1}/v\gn J^{-1}$.  Taking into account the factors of $||\det(g_\iota)|| = \Nm_{F/\Q}(v)^{-1}$ and $||\det(h)|| = \Nm_{F/\Q}(v)^{-2}$, it follows from the first equation that
$$\bq_{c_t'}(g_\infty f) = \Nm_{F/\Q}(v) \!\!\!\!\!\! \!\!\!\!\!\! 
\sum_{m \in ((\gd\gn)^{-1}vJ)_+ \cup \{0\}}\!\!\!\!\!\! \!\!\!\!\!\!  r_m^{\varpi_v^{-1}t}(S_v f)q^m,$$
and from the second that
$$\bq_{c_t'}(g_\iota f) = \Nm_{F/\Q}(v)^{-1} \!\!\!\!\!\! \!\!\!\!\!\! 
\sum_{m \in ((\gd\gn v)^{-1}J)_+ \cup \{0\}}\!\!\!\!\!\! \!\!\!\!\!\! \zeta_{N\ell}^{-\epsilon_\iota(N\ell m)} r_m^{\varpi t}(f)q^m,$$
where $\ell$ is the rational prime in $v$.  Since $m \mapsto 
\zeta_{N\ell}^{-\epsilon_\iota(N\ell m)}$ runs through the distinct characters $(\gd\gn v)^{-1}J/(\gd\gn)^{-1}J \to \CO^\times$ as $\epsilon_\iota$ runs through the representatives of $\gn J^{-1}/v\gn J^{-1}$, it follows that
$$\sum_{\iota \in \CO_F/v} \!\!\! \zeta_{N\ell}^{-\epsilon_\iota(N\ell m)} = \left\{\begin{array}{cl}
 \Nm(v), & \mbox{if $m \in (\gd\gn)^{-1}J$;}\\
   0, & \mbox{otherwise.}\end{array}\right.$$
Summing over $\iota \in \PP^1(\CO_F/v)$ thus yields the desired formula. 

If $U = U_1(\gn)$, then part (1) follows from the case of $U = U(\gn)$.  Alternatively one can use the same argument as above with slight modifications to yield both (1) and (2).  Indeed the only changes needed are that the term with $\iota = \infty$ does not appear if $v|\gn$, we take $U' = U \cap U(v)$ instead of $U(\gn v)$, we choose $\epsilon_\iota \in J^{-1}/vJ^{-1}$ for $\iota \in \CO_F/v$, and the factor of $\gn$ disappears from the remaining expressions.
\end{proof}

\begin{remark} Note that by (\ref{eqn:rmt}), the formulas in Proposition~\ref{prop:Tvonq} agree with the ones in Propositions~9.5.1 and~9.6.1 of \cite{DS1}, where it is assumed $p$ is unramified in $F$ and the ideal denoted there by $J$ is our $\gd J_t^{-1}$.  The difference by a factor of $\gd$ in the description of $D_{\vec{k},\vec{m},R}$ and its counterpart in \cite{DS1} arises from our use here of the more natural definition  in \cite{theta} of the line bundles $\widetilde{\CA}_{\vec{k},\vec{m}}$ in terms of determinants of certain rank two subquotients of the de Rham cohomology of the universal abelian variety.
\end{remark}

\subsection{The operator $T_\gp$}
We now consider Hecke operators at primes $\gp|p$.  Recall that $T_{\gp}$ is defined on 
$M_{\vec{k},\vec{m}}(U;R)$ in \cite[\S5.4]{KS} under the assumption\footnote{unnecessary if $R$ is a $K$-algebra} that
\begin{equation}\label{eqn:inequality}
\sum_{\theta\in \Theta_\gp} 
\min\{m_\theta,m_\theta+k_\theta-1\} \ge 0.
\end{equation}
More precisely, if (\ref{eqn:inequality}) holds, then for any sufficiently small $U$ prime to $p$
and Noetherian $\CO$-algebra $R$ such that $\chi_{\vec{k}+2\vec{m},R} = 1$ on $\CO_F^\times \cap U$, the operator 
$T_\gp$ on $M_{\vec{k},\vec{m}}(U;R)$ is defined as the composite 
\begin{equation}\label{eqn:Tpdef}
\begin{array}{rcl} H^0(Y_R,\CA_{\vec{k},\vec{m},R}) &\longrightarrow&
H^0(Y_0(\gp)_R,\pi_2^*\CA_{\vec{k}-\vec{1},\vec{m},R}
   \otimes \pi_2^*\omega_R) \\
&\longrightarrow&
H^0(Y_0(\gp)_R,\pi_1^*\CA_{\vec{k}-\vec{1},\vec{m},R}
   \otimes \pi_2^*\omega_R) \\
&\stackrel{\sim}{\longrightarrow}&
H^0(Y_R,\CA_{\vec{k}-\vec{1},\vec{m},R} \otimes \pi_{1,*}\pi_2^*\omega_R)\\ 
&\longrightarrow&
H^0(Y_R,\CA_{\vec{k},\vec{m},R}),\end{array}
\end{equation}
where the first morphism is pull-back by $\pi_2$, the second is induced by
the universal isogeny over $\widetilde{Y}_0(\gp)$, the third is the projection formula, 
and the last is induced by the saving trace.
Furthermore the operator $T_\gp$ commutes with the action of $\GL_2(\A_{F,\f}^{(p)})$ in the obvious sense, so it defines a $\GL_2(\A_{F,\f}^{(p)})$-equivariant endomorphism of $M_{\vec{k},\vec{m},R}$.  We may therefore define the operator $T_\gp$ on $M_{\vec{k},\vec{m}}(U;R)$ for arbitrary $U$ (prime to $p$) by taking $U$-invariants.
\begin{theorem} \label{thm:Tp} Suppose that $\gp$ is a prime of $\CO_F$ over $p$ and that $\vec{k},\vec{m}\in \Z^\Theta$ satisfy (\ref{eqn:inequality}).  Then the operators $T_\gp$ on $M_{\vec{k},\vec{m}}(U;R)$ defined above induce a $\GL_2(\A_{F,\f}^{(p)})$-equivariant endomorphism of $M_{\vec{k},\vec{m},R}$
which preserves $S_{\vec{k},\vec{m},R}$, is compatible with base-changes $R \to R'$, and coincides with the classical Hecke operator $T_\gp$ if $R$ is a $K$-algebra (in which case $k_\theta + 2m_\theta$ is independent of $\theta$).
\end{theorem}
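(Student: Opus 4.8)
The plan is to extend each of the four morphisms in~(\ref{eqn:Tpdef}) to the toroidal, and hence the minimal, compactifications using the results of \S\S\ref{sec:U0} and~\ref{sec:KS}, and then to read off the assertions of the theorem from this extension.  Assume first that $U = U(N)$.  By Theorem~\ref{thm:tY0tor}(3) the maps $\pi_1,\pi_2$ extend to $Y_0(\gp)^{\tor}\to Y^{\tor}$ and the universal isogeny $\psi$ extends to an isogeny $\psi^{\tor}\colon A_1^{\tor}\to A_2^{\tor}$ of semi-abelian schemes with $A_i^{\tor} = (\pi_i^{\tor})^*A^{\tor}$; accordingly (as in \S\ref{ss:KP}) the line bundles $\CA_{\vec{k},\vec{m},R}$ and $\CA^{(i)}_{\vec{k},\vec{m},R}$ acquire formally canonical extensions $\CA^{\tor}_{\vec{k},\vec{m},R}$, $\CA^{(i),\tor}_{\vec{k},\vec{m},R}$, compatibly with $\CA^{\tor}_{\vec{k},\vec{m},R} = \CA^{\tor}_{\vec{k}-\vec{1},\vec{m},R}\otimes\omega^{\tor}_R$ and $(\pi_i^{\tor})^*\CA^{\tor}_{\vec{k},\vec{m},R} = \CA^{(i),\tor}_{\vec{k},\vec{m},R}$.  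Then $\psi^{\tor}$ induces the extension of the second morphism, the projection formula (applied to the line bundle $\CA^{\tor}_{\vec{k}-\vec{1},\vec{m},R}$) that of the third, and Proposition~\ref{prop:stq} the extension $\widetilde{\st}_R\colon (\pi_1^{\tor})_*(\pi_2^{\tor})^*\omega^{\tor}_R\to\omega^{\tor}_R$ of the saving trace, hence the extension of the fourth.  Pushing forward to $Y^{\min}_R$ and using the Koecher Principle (\S\ref{ss:KP}), which identifies $H^0$ of these canonical extensions with $M_{\vec{k},\vec{m}}(U;R)$, one obtains an endomorphism of $M_{\vec{k},\vec{m}}(U;R)$ that agrees with the composite~(\ref{eqn:Tpdef}), i.e.\ with the operator $T_\gp$ of \cite{KS}.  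Since each step is compatible with the prime-to-$p$ Hecke maps $\rho_g$ (and $\rho_g^\infty$, via Theorem~\ref{thm:Y0min}, Lemma~\ref{lem:hecke} and Proposition~\ref{prop:stq}), the operator $T_\gp$ commutes with $\GL_2(\A_{F,\f}^{(p)})$ (as already observed), hence descends to a $\GL_2(\A_{F,\f}^{(p)})$-equivariant endomorphism of $M_{\vec{k},\vec{m},R}$, and to one of $M_{\vec{k},\vec{m}}(U;R)$ for arbitrary (prime-to-$p$) $U$ by taking invariants.

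For the preservation of cusp forms, I would compute the effect of $T_\gp$ on $q$-expansions at the cusps.  Reading the extended morphisms on completions along the boundary via Propositions~\ref{prop:koecher} and~\ref{prop:stq} and Lemma~\ref{lem:hecke} — in particular using that the saving trace acts on $q$-expansions by $b\otimes\sum r_m q^m\mapsto\beta(b)\otimes\sum r_{\alpha(m)}q^m$ with $\alpha(0)=0$ — one finds that the constant term $\e_c(T_\gp f)$ at any cusp $c$ of $Y^{\min}_R$ is a fixed $R$-linear combination of the constant terms of $f$ at cusps of $Y^{\min}_R$.  Hence $T_\gp$ preserves the kernel of $\e_C$, i.e.\ $S_{\vec{k},\vec{m}}(U;R)$, for all sufficiently small $U$, and therefore $S_{\vec{k},\vec{m},R}$.

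For compatibility with a base change $R\to R'$, I would check that each morphism in (the extension of)~(\ref{eqn:Tpdef}) commutes with $-\otimes_R R'$.  Pull-back by $\pi_2$ and the isogeny map do so tautologically; the projection-formula step does because $R^i(\pi_1^{\tor})_*(\pi_2^{\tor})^*\omega^{\tor}$ vanishes for $i>0$ and is locally free for $i=0$, by Theorem~\ref{thm:vanish} together with the canonical trivialization of $\omega^{\tor}$ along the boundary, so that $(\pi_1^{\tor})_*((\pi_2^{\tor})^*\omega^{\tor}_{R'}) = \bigl((\pi_1^{\tor})_*(\pi_2^{\tor})^*\omega^{\tor}\bigr)_{R'}$; and the saving trace base-changes by the last sentence of Proposition~\ref{prop:stq}.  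Thus $T_\gp$ is compatible with the natural morphism $M_{\vec{k},\vec{m},R}\otimes_R R'\to M_{\vec{k},\vec{m},R'}$.  Finally, when $R$ is a $K$-algebra the hypothesis forces $k_\theta+2m_\theta$ to be constant, the $p$-integral moduli problems reduce to the classical ones over $K$, $\pi_1$ becomes finite \'etale of degree $\Nm(\gp)+1 = p^{f_\gp}+1$, and by Proposition~\ref{prop:stq} the saving trace becomes $\Nm(\gp)^{-1}$ times the usual trace for $\pi_1$ (composed with the canonical comparison of $\pi_1^*\widetilde{\delta}$ and $\pi_2^*\widetilde{\delta}$); so the composite~(\ref{eqn:Tpdef}) is the classical $T_\gp$.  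The normalization is pinned down by evaluating the $q$-expansion of $T_\gp f$ at the cusps at $\infty$ (via Lemma~\ref{lem:hecke}, Proposition~\ref{prop:koecher}(3) and Proposition~\ref{prop:stq}) and comparing with the classical formula, using that by the $q$-expansion Principle (Proposition~\ref{prop:qexp}) an operator on $M_{\vec{k},\vec{m}}(U;K)$ is determined by its effect on $q$-expansions at any set of cusps meeting every connected component.

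The main obstacle is the bookkeeping underlying the last two paragraphs: tracking the weight normalizations — the characters $\chi_{\vec{m},R}$ and $\chi_{\vec{k}+\vec{m},R}$, and the powers of $\Nm(\gp)$ — together with the identifications of the invertible $\CO$-modules $D_{\vec{k},\vec{m}}$ and $D^{(i)}_{\vec{k},\vec{m}}$ through the four morphisms of~(\ref{eqn:Tpdef}) and their compactified extensions, so as to pin down the constant-term formula and match it against the classical convention.
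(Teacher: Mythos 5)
Your proposal is correct, and on the one assertion that genuinely requires a new argument in this paper --- that $T_\gp$ preserves $S_{\vec{k},\vec{m},R}$ --- it follows essentially the paper's own route: pass from minimal to toroidal compactifications via the Koecher Principle, read the four maps of (\ref{eqn:Tpdef}) on completions along the boundary (the maps (\ref{eqn:QtoQ}), (\ref{eqn:DtoD})), and conclude from Proposition~\ref{prop:stq} that the saving trace, like the other three maps, sends constant terms to constant terms, so cuspidality is preserved. The difference is one of economy: the paper dispatches the remaining assertions ($\GL_2(\A_{F,\f}^{(p)})$-equivariance, compatibility with base change $R \to R'$, and agreement with the classical $T_\gp$ over a $K$-algebra) in a single sentence as immediate from \cite[Thm.~5.4.1]{KS} (equivariance is in fact already recorded in the paragraph preceding the theorem), whereas you re-derive them by extending the whole composite (\ref{eqn:Tpdef}) over $\widetilde{Y}^{\tor}$ using Theorem~\ref{thm:vanish} and Proposition~\ref{prop:stq}; this is harmless but not needed, and your sketch of the identification with the classical operator over $K$ (normalization of the saving trace) is precisely the kind of bookkeeping the citation absorbs --- the paper only reconfirms it afterwards via the explicit $q$-expansion formula of Proposition~\ref{prop:Tponq}. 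One detail worth making explicit in your cusp computation, which the paper does record: in (\ref{eqn:DtoD}) the map $D^{(2)}_{\vec{k}-\vec{1},\vec{m}} \to D^{(1)}_{\vec{k}-\vec{1},\vec{m}}$ induced by the universal isogeny is given by the inclusions $I_2^{-1}\hookrightarrow I_1^{-1}$ and $J_2^{-1}\hookrightarrow J_1^{-1}$, and its integrality is exactly where the hypothesis (\ref{eqn:inequality}) enters.
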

\begin{proof} The assertions are all immediate from \cite[Thm.~5.4.1]{KS}, except for the one concerning $S_{\vec{k},\vec{m},R}$.  So it remains to prove that $T_\gp$ preserves $S_{\vec{k},\vec{m}}(U;R)$ for sufficiently small $U$, which we may furthermore assume is of the form $U=U(N)$.

For each cusp $c \in C_0(\gp)$, consider the effect on $q$-expansions of the maps on completions
\begin{equation}\label{eqn:QtoQ} \begin{array}{rcl}
Q_2 := (i_*\CA_{\vec{k},\vec{m},R})_{Z_R^{(2)}}^\wedge &
  \longrightarrow & 
(j_*(\pi_2^*\CA_{\vec{k}-\vec{1},\vec{m},R} \otimes \pi_2^*\omega_R))_{Z_R}^\wedge \\
& \longrightarrow & 
(j_*(\pi_1^*\CA_{\vec{k}-\vec{1},\vec{m},R} \otimes \pi_2^*\omega_R))_{Z_R}^\wedge \\
& {\longrightarrow} & 
(i_*(\CA_{\vec{k}-\vec{1},\vec{m},R} \otimes \pi_{1,*}\pi_2^*\omega_R))_{Z_R^{(1)}}^\wedge \\
& \longrightarrow & 
(i_*\CA_{\vec{k},\vec{m},R})_{Z_R^{(1)}}^\wedge =: Q_1
\end{array}
\end{equation}
induced by the morphisms in (\ref{eqn:Tpdef}), where 
$i:Y \hookrightarrow Y^{\min}$, $j:Y_0(\gp) \hookrightarrow Y_0(\gp)^{\min}$, and $Z$ (resp.~$Z^{(1)}$, $Z^{(2)}$) is the component of the complement corresponding to~$c$ (resp.~$\pi_1^\infty(c)$, $\pi_2^\infty(c)$).
It suffices to prove that if the constant term of the $q$-expansion associated to an element of $Q_2$ vanishes, so does that of its image in $Q_1$.
To that end we may replace $Y$ by $\widetilde{Y}$, $Y_0(\gp)$ by $\widetilde{Y}_0(\gp)$, $c$ by any cusp $\widetilde{c} \in \widetilde{C}_0(\gp)$ lying over it, etc. 
Furthermore, by the Koecher Principle (for $\widetilde{\CA}_{\vec{k},\vec{m},R}$ over $\widetilde{Y}_R$), we may replace
minimal compactifications by toroidal and consider instead the morphisms on completions associated to the composite
$$\begin{array}{rcl} H^0(\widetilde{Y}^\tor_R,\widetilde{\CA}^{\tor}_{\vec{k},\vec{m},R}) &\longrightarrow&
H^0(\widetilde{Y}_0(\gp)^\tor_R,\widetilde{\CA}^{(2),\tor}_{\vec{k}-\vec{1},\vec{m},R}
   \otimes \widetilde{\pi}_2^*\widetilde{\omega}_R) \\
&\longrightarrow&
H^0(\widetilde{Y}_0(\gp)^\tor_R,\widetilde{\CA}^{(1),\tor}_{\vec{k}-\vec{1},\vec{m},R}
   \otimes \widetilde{\pi}_2^*\widetilde{\omega}_R) \\
&\stackrel{\sim}{\longrightarrow}&
H^0(\widetilde{Y}^\tor_R,\widetilde{\CA}^\tor_{\vec{k}-\vec{1},\vec{m},R} \otimes \widetilde{\pi}_{1,*}\widetilde{\pi}_2^*\widetilde{\omega}_R)\\ 
&\longrightarrow&H^0(\widetilde{Y}^\tor_R,\widetilde{\CA}^{\tor}_{\vec{k},\vec{m},R}).\end{array}$$
The desired morphism can therefore be realized as the composite 
of the restrictions to $(U\cap \CO_F^\times)^2$-invariants of the maps
\begin{equation}\label{eqn:DtoD} \begin{array}{rcl} D_{\vec{k},\vec{m}}^{(2)}\otimes_{\CO}\Gamma(\widehat{S}_2,\CO_{\widehat{S}_2})
&\longrightarrow&
D_{\vec{k},\vec{m}}^{(2)}\otimes_{\CO}\Gamma(\widehat{S},\CO_{\widehat{S}})\\ & =  &
D_{\vec{k}-\vec{1},\vec{m}}^{(2)}\otimes_{\CO}
(\bigwedge^d(I_2)^{-1} \otimes\Gamma(\widehat{S},\CO_{\widehat{S}})) \\
&\longrightarrow&
D_{\vec{k}-\vec{1},\vec{m}}^{(1)}\otimes_{\CO}
(\bigwedge^d(I_2)^{-1} \otimes\Gamma(\widehat{S},\CO_{\widehat{S}})) \\
&{\longrightarrow}&
D_{\vec{k}-\vec{1},\vec{m}}^{(1)}\otimes_{\CO}
(\bigwedge^d(I_1)^{-1} \otimes\Gamma(\widehat{S}_1,\CO_{\widehat{S}_1})) \\ 
&= & D_{\vec{k},\vec{m}}^{(1)}\otimes_{\CO}\Gamma(\widehat{S}_1,\CO_{\widehat{S}_1}),
\end{array}
\end{equation}
where $\widehat{S}$ (resp.~$\widehat{S}_i$) is the formal scheme whose quotient by $(U\cap \CO_F^\times)^2$ defines the
completion of $\widetilde{Y}_0(\gp)_R^\tor$ (resp.~$\widetilde{Y}_R^\tor$) along the preimage of $\widetilde{Z}_R$ (resp.~$\widetilde{Z}_R^{(i)}$),
so that
$$\Gamma(\widehat{S},\CO_{\widehat{S}})
 = R[[q^m]]_{m \in N^{-1}M_+\cup\{0\}},\quad
 \Gamma(\widehat{S}_i,\CO_{\widehat{S}_i})
 = R[[q^m]]_{m \in N^{-1}M_{i,+}\cup\{0\}}$$
(with $M = \gd^{-1}I_1^{-1}J_2$, $M_i = \gd^{-1}I_i^{-1}J_i$),
and the morphisms $\Gamma(\widehat{S}_2,\CO_{\widehat{S}_2})
\to \Gamma(\widehat{S},\CO_{\widehat{S}})$ and 
$D_{\vec{k}-\vec{1},\vec{m}}^{(2)}
\to D_{\vec{k}-\vec{1},\vec{m}}^{(1)}$ are induced
(in the latter case thanks to (\ref{eqn:inequality}))
by the inclusions $I_2^{-1} \hookrightarrow I_1^{-1}$
and $J_2^{-1} \hookrightarrow J_1^{-1}$, and the last morphism
by the saving trace.  The desired conclusion is therefore
immediate from Proposition~\ref{prop:stq}.
\end{proof}

\subsection{The operator $S_\gp$} \label{ss:Sp}
As in the case of primes $v\nmid p$, the expression for its effect on $q$-expansions will involve the operator $S_\gp$, which may also be defined on $M_{\vec{k},\vec{m}}(\gn;R)$ for arbitrary $R$ under hypotheses on $(\vec{k},\vec{m})$.  More precisely, consider the automorphism $\widetilde{\rho}$ of $\widetilde{Y}$ defined in terms of the universal object $(A,\iota,\lambda,\eta,\CF^\bullet)$ by the data $(A',\iota',\lambda',\eta',\CF^{\prime,\bullet})$, where
\begin{itemize}
\item $A' = A \otimes_{\CO_F}\gp^{-1}$;
\item $\iota'$ is the $\CO_F$-action compatible with the isogeny $\sigma:A \to A'$;
\item $\lambda'$ is the quasi-polarization such that $\sigma^
{\vee}\circ\lambda'\circ \sigma = \lambda\circ\iota(\varpi_\gp^2)$;
\item $\eta'$ is the level $U$-structure compatible with $\sigma$;
\item $\CF^{\prime,\bullet}$ is the Pappas--Rapoport filtration corresponding to $\CF^\bullet\otimes_{\CO_F}\gp$ under the identification $s'_*\Omega^1_{A'/\widetilde{Y}} = (s_*\Omega^1_{A/Y})\otimes_{\CO_F}\gp$.
\end{itemize}
Note that since $\sigma$ induces isomorphisms
$$\widetilde{\rho}^*\omega_\theta \stackrel{\sim}{\longrightarrow} \theta(\varpi_\gp)\omega_\theta\quad\mbox{and}\quad
\widetilde{\rho}^*\delta_\theta \stackrel{\sim}{\longrightarrow} \theta(\varpi_\gp)^2\delta_\theta,$$
it induces a morphism $\widetilde{\rho}^*\widetilde{\CA}_{\vec{k},\vec{m}} \to \widetilde{\CA}_{\vec{k},\vec{m}}$ divisible by $\Nm(\gp^2)$ provided
\begin{equation}\label{eqn:inequality2} \sum_{\theta\in \Theta_\gp} (k_\theta + 2 m_\theta) \ge 2e_{\gp}f_{\gp}.\end{equation}
Furthermore the automorphism $\widetilde{\rho}$ descends to $Y$, as does the above morphism of sheaves (divided by $\Nm(\gp^2)$) to $Y_R$, yielding a morphism ${\rho}^*{\CA}_{\vec{k},\vec{m},R} \to {\CA}_{\vec{k},\vec{m},R}$ (assuming the above inequality
and the usual hypothesis that $\chi_{\vec{k}+2\vec{m},R}$ is trivial on $\CO_F^\times \cap U$, and denoting the automorphism of $Y_R$ by $\rho$).  We then define the endomorphism $S_\gp$ of $M_{\vec{k},\vec{m}}(U;R)$ to be the composite
$$H^0(Y_R,\CA_{\vec{k},\vec{m},R}) \longrightarrow
H^0(Y_R,\rho^*\CA_{\vec{k},\vec{m},R}) \longrightarrow
H^0(Y_R,\CA_{\vec{k},\vec{m},R}).$$

The operator $S_\gp$ is independent of the choice of $\varpi_\gp$, but we need to introduce a renormalization in order to describe the effect of $T_\gp$ on $q$-expansions.  Recall that the image of the morphism $\widetilde{\rho}^*\widetilde{\CA}_{\vec{k},\vec{m}} \to \widetilde{\CA}_{\vec{k},\vec{m}}$ induced by $\sigma$ has image $\chi_{\vec{k}+2\vec{m}}(\varpi_\gp)\widetilde{\CA}_{\vec{k},\vec{m}}$, so its composite with multiplication by
$\Nm(\gp)^{-1}\chi_{\vec{1}-\vec{k}-2\vec{m}}(\varpi_\gp)$ defines an isomorphism 
$\widetilde{\rho}^*\widetilde{\CA}_{\vec{k},\vec{m}} \stackrel{\sim}{\to} \widetilde{\CA}_{\vec{k},\vec{m}}$
for any $\vec{k},\vec{m}\in \Z^\Sigma$.
We then let $S_{\varpi_\gp}$ denote the automorphism of $M_{\vec{k},\vec{m}}(U;R)$ defined in the same as way as $S_\gp$, but using the resulting isomorphism ${\rho}^*{\CA}_{\vec{k},\vec{m},R} \stackrel{\sim}{\to}{\CA}_{\vec{k},\vec{m},R}$.
We therefore have the relation
$$S_\gp = \Nm(\gp)^{-1}\chi_{\vec{k}+2\vec{m} - \vec{1}}(\varpi_\gp)S_{\varpi_\gp}$$
whenever (\ref{eqn:inequality2}) holds (so that $S_\gp$ is defined, and $\Nm(\gp)^{-1}\chi_{\vec{k}+2\vec{m} - \vec{1}}(\varpi_\gp) \in \CO$).
Note that $S_{\varpi_\gp}$ depends on $\varpi_\gp$; more precisely if $\varpi'_{\gp} = \alpha\varpi_\gp$ for some $\alpha \in \CO_{F,(p),+}^\times$, then 
\begin{equation} \label{eqn:Spip} S_{\varpi_\gp} = \chi_{\vec{k}+2\vec{m} - \vec{1}}(\alpha)S_{\varpi'_\gp}.\end{equation}
  Alternatively, we may write $S_{\varpi_\gp} = ||x||[Ug^{-1}U]$, where\footnote{Beware the slight inconsistency in notation: $\varpi_v\in F_v^\times \subset (\A_{F,\f}^{(p)})^\times$ for $v\nmid p$, but $\varpi_\gp \in F^\times$ is diagonally embedded in $\A_{F,\f}^\times$ for $\gp|p$.}
$x = \varpi^{(p)}_\gp \in (\A_{F,\f}^{(p)})^\times$
and $g = \smat{x}{0}{0}{x} \in \GL_2(\A_{F,\f}^{(p)})$,
so that $||x|| = \Nm(\gp)\Nm(\varpi_\gp)^{-1} \in \Z_{(p)}^\times$.

It is immediate from the latter description that the operators $S_{\varpi_\gp}$ (as levels $U$ prime to $p$ and primes $\gp|p$ vary) define commuting $\GL_2(\A_{F,\f}^{(p)})$-equivariant automorphisms of $M_{\vec{k},\vec{m},R}$  preserving $S_{\vec{k},\vec{m},R}$.  In particular the $S_{\varpi_\gp}$ induce commuting automorphisms of $M_{\vec{k},\vec{m}}(U;R)$ preserving $S_{\vec{k},\vec{m}}(U;R)$ for all open compact subgroups $U$ of $\GL_2(\A_{F,\f}^{(p)})$ containing $\GL_2(\CO_{F,p})$.  It follows that the same is true for $S_\gp$ assuming  (\ref{eqn:inequality2}) holds, but with ``automorphism'' replaced by ``endomorphism.''

\subsection{$T_\gp$ on $q$-expansions} \label{ss:Tponq}
We now determine the effect of $T_\gp$ on $q$-expansions at cusps at $\infty$.

\begin{proposition} \label{prop:Tponq} Suppose that $\gp$ is a prime of $\CO_F$ dividing $p$, $\gn$ is an ideal of $\CO_F$ prime to $p$, $U$ is an open compact subgroup of $\GL_2(\A_{F,\f}^{(p)})$ containing $U(\gn)$, and let $f \in M_{\vec{k},\vec{m}}(U;R)$.  Then
$$r_m^t(T_\gp f) = \chi_{\vec{m}}(\varpi_\gp)r_{\varpi_\gp m}^{x^{-1}t}(f) +  \chi_{\vec{k}+\vec{m}-\vec{1}}(\varpi_\gp)r_{\varpi_\gp^{-1}m}^{xt}(S_{\varpi_\gp}f)$$
for all $t \in (\A_{F,\f}^{(p)})^\times$ and $m \in (\gd^{-1}\gn^{-1}J_t)_+ \cup \{0\}$, 
where $x = \varpi_{\gp}^{(p)}$.
\end{proposition}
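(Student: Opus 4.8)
The strategy is to reduce the computation to the factorization $T_\gp = \widetilde{\st}_R \circ (\text{projection formula}) \circ (\text{isogeny map}) \circ \pi_2^*$ from~(\ref{eqn:Tpdef}), and to trace $q$-expansions through each of the four arrows, exactly as in the proof of Theorem~\ref{thm:Tp} but now keeping track of coefficients rather than merely the vanishing of the constant term. We may assume $U = U(\gn)$ (the case of smaller $U$ then follows by taking invariants, and general $U$ as in \S\ref{ss:hmfs}); by~(\ref{eqn:rmt}) it suffices to compute $r_m^t(T_\gp f)$ for a single convenient choice of $t$ in each strict ideal class, and we may further enlarge $\gn$ by a factor prime to $p$ so that $U$ is $\gp$-neat. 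Fix a cusp $c_t$ at $\infty$ of level $U$, corresponding to $\unH = (H,I,[\lambda],[\eta])$ with $H = J_t \times I$, $I = \CO_F$; the relevant cusps of $C_0(\gp)$ lying over it via $\pi_1^\infty$ (resp.~$\pi_2^\infty$) are the ones appearing in the definition of $T_\gp$, and by Theorem~\ref{thm:Y0min}(3) and the description in \S\ref{ss:cuspsinf} we have $c = [\unH_1,\unH_2,\alpha]$ with $\unH_1 = c_{\varpi_\gp^{-1}t}$, $\unH_2 = c_t$ (for the cusp contributing the ``first'' term) and with $\unH_1 = c_t$, $\unH_2 = c_{\varpi_\gp t}$ (for the ``second''), the map $\alpha$ being multiplication by $\varpi_\gp$ in each case, so that the two relevant modules $M_1 = \gd^{-1}I_1^{-1}J_1$ and $M_2 = \gd^{-1}I_2^{-1}J_2$ each get identified with $\gd^{-1}\gn^{-1}J_t$ up to scaling by a power of $\varpi_\gp$.

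\textbf{Key steps.} First I would invoke Proposition~\ref{prop:koecher} to identify the completion of $j_*\CA^{(2)}_{\vec{k},\vec{m},R}$ at $c$ with the explicit $q$-expansion module $Q^{(2)}_{\vec{k},\vec{m},R,c}$, and likewise for $i=1$; combined with Proposition~\ref{prop:qexp} applied at the cusps at $\infty$, this reduces the whole problem to a power-series identity. Second, the arrow $\pi_2^*$ on $q$-expansions is described at the end of \S\ref{ss:qep}: $\bq^{(2)}_c(\pi_2^*f)$ is $\bq_{c_t}(f)$ reindexed via the inclusions $I_1\hookrightarrow I_2$, $J_2\hookrightarrow J_1$; here this introduces precisely the substitution $q^m\mapsto q^{\varpi_\gp^{\pm 1}m}$ together with the power of $\chi$ coming from the identification of the $D$-modules — this is where $\chi_{\vec m}(\varpi_\gp)$ and $\chi_{\vec k+\vec m-\vec 1}(\varpi_\gp)$ will begin to appear. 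Third, the isogeny map $\widetilde{\pi}_2^*\widetilde\omega\to\widetilde\pi_1^*\widetilde\omega$ and the projection-formula isomorphism are the identity on $q$-coefficients once one fixes the trivializations $\xi_0^*\widetilde\omega\cong\wedge^d I_i^{-1}\otimes\CO_{\widehat S}$ (cf.~the commuting square in the proof of Theorem~\ref{thm:KS} and the discussion in~\S\ref{ss:st}). Fourth — and this is the crux — the saving trace $\widetilde{\st}_R$ acts on $q$-expansions by the explicit formula of Proposition~\ref{prop:stq}: on the component over a cusp with $\gP=\gp=\gq_1\gq_2$ it is $\beta\otimes\Nm(\gq_1)^{-1}\tr$, where $\tr$ is the trace of the finite flat map $h:T_{M,\sigma}\to T_{M_1,\sigma}$ of degree $[J_i:\alpha(J_j)]=\Nm(\gp)$, and on global sections $\sum r_mq^m\mapsto\sum r_{\alpha(m)}q^m$. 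One must separate the contributions of the two types of cusp over $c_t$: for the cusp with $\gq_1 = \gp$ (i.e.~$\alpha(J_1)\subset\gp J_2$) the trace forces $m\in\gd^{-1}\gn^{-1}J$ and produces the first summand $\chi_{\vec m}(\varpi_\gp)r^{x^{-1}t}_{\varpi_\gp m}(f)$; for the cusp with $\gq_2=\gp$ the map $\beta$ is an isomorphism of rank-one modules contributing the factor $\chi_{\vec k+\vec m-\vec 1}(\varpi_\gp)$ and $\Nm(\gq_1)^{-1}$ combines with the renormalization defining $S_{\varpi_\gp}$ (see~\S\ref{ss:Sp}, especially the relation $S_\gp=\Nm(\gp)^{-1}\chi_{\vec k+2\vec m-\vec 1}(\varpi_\gp)S_{\varpi_\gp}$) to give $\chi_{\vec k+\vec m-\vec 1}(\varpi_\gp)r^{xt}_{\varpi_\gp^{-1}m}(S_{\varpi_\gp}f)$. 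Summing the two contributions yields the claimed formula.

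\textbf{Main obstacle.} The genuine difficulty is bookkeeping the various rank-one $\CO$-modules $D^{(i)}_{\vec k,\vec m}$, $D_{\vec k,\vec m,t}$, and $\wedge^d I_i^{-1}$ and the canonical isomorphisms among them induced by $\alpha$ (multiplication by $\varpi_\gp$) and by the changes of cusp $c_t\leftrightarrow c_{\varpi_\gp^{\pm1}t}$, so that the exponents of $\chi_{\vec m}$ and $\chi_{\vec k+\vec m-\vec 1}$ come out correctly and consistently with the normalization in~(\ref{eqn:rmt}) and with the definition of $S_{\varpi_\gp}$ rather than $S_\gp$. A secondary point requiring care is the inequality~(\ref{eqn:inequality}): it is exactly what guarantees that the map $D^{(2)}_{\vec k-\vec 1,\vec m}\to D^{(1)}_{\vec k-\vec 1,\vec m}$ induced by $I_2^{-1}\hookrightarrow I_1^{-1}$, $J_2^{-1}\hookrightarrow J_1^{-1}$ is integral, and one should check that the resulting divisibilities match the stated integral formula (no hypothesis~(\ref{eqn:inequality2}) is needed, since $S_{\varpi_\gp}$, unlike $S_\gp$, is defined unconditionally). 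Once the module identifications are pinned down, the remaining verification is a direct substitution using Propositions~\ref{prop:koecher}, \ref{prop:qexp} and~\ref{prop:stq}, entirely parallel to the computation of $T_v$ in the proof of Proposition~\ref{prop:Tvonq}.
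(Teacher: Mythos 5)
Your overall strategy is the same as the paper's: trace $q$-expansions through the four maps in (\ref{eqn:Tpdef}) at the two cusps of $C_0(\gp)$ over $c_t$, using Proposition~\ref{prop:koecher} and the explicit formula for the saving trace (Proposition~\ref{prop:stq}), exactly as in the proof of Theorem~\ref{thm:Tp} but retaining coefficients. However, there is a genuine gap in your treatment of the second (``multiplicative'') cusp, and your identification of the relevant cusps is off. Since $T_\gp$ is a pushforward along $\pi_1$, the cusps of $C_0(\gp)$ contributing to $\bq_{c_t}(T_\gp f)$ are the two lying over $c_t$ under $\pi_1^{\infty}$, i.e.\ both have $\unH_1$ given by the data of $c_t$; your first cusp, with $\unH_1 = c_{\varpi_\gp^{-1}t}$ and $\unH_2 = c_t$, lies over $c_{\varpi_\gp^{-1}t}$ and would contribute to the expansion there, and reading $f$ at its $\unH_2$ would produce $r^t_{\cdot}(f)$ rather than the required $r^{x^{-1}t}_{\cdot}(f)$.

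More seriously, for the cusp with $\gq_2 = \gp$ (the one with $H' = J_t \times \gp^{-1}$) the image under $\pi_2^{\infty}$ is \emph{not} the cusp at $\infty$ $c_{xt}$, as you assert: it is $\rho^{\infty}(c_{xt})$, where $\rho$ is the automorphism of \S\ref{ss:Sp} (its $I$-component is $\gp^{-1}$, not $\CO_F$). Consequently the $q$-expansion of $f$ being fed into (\ref{eqn:DtoD}) at that cusp is not one of the $\bq_{c_{t'}}(f)$ at all, and no bookkeeping of scalars can turn it into $r^{xt}_{\varpi_\gp^{-1}m}(S_{\varpi_\gp}f)$. The paper's device is precisely to write $\bq_{\pi_2^{\infty}(c_t^{\mu})}(f) = \bq_{\pi_2^{\infty}(c_t^{\mu})}\bigl(S_{\varpi_\gp}^{-1}(S_{\varpi_\gp} f)\bigr)$ and to compute the effect of $S_{\varpi_\gp}^{-1} = \|x\|^{-1}\bigl[U\smat{x}{0}{0}{x}U\bigr]$ on completions via Proposition~\ref{prop:koecher}(3), which transports the expansion from $\rho^{\infty}(c_{xt})$ to $c_{xt}$; this is where $S_{\varpi_\gp}f$ and the factor $\|x\|$ enter. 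Your proposed mechanism --- that ``$\Nm(\gq_1)^{-1}$ combines with the renormalization defining $S_{\varpi_\gp}$'' --- cannot supply this: at the multiplicative cusp one has $\gq_1 = \CO_F$, so that factor is $1$, and the saving trace there is simply the reindexing of Proposition~\ref{prop:stq}. So the missing idea is the comparison of the completion at the twisted cusp with that at $c_{xt}$ via $\rho$ (equivalently $S_{\varpi_\gp}^{-1}$); the rest of your outline (the \'etale-cusp computation giving $\chi_{\vec m}(\varpi_\gp)r^{x^{-1}t}_{\varpi_\gp m}(f)$, the role of (\ref{eqn:inequality}) for integrality of $D^{(2)}_{\vec k - \vec 1,\vec m} \to D^{(1)}_{\vec k - \vec 1,\vec m}$, and the fact that (\ref{eqn:inequality2}) is not needed) is consistent with the paper.
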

\begin{proof} First note that the inequality (\ref{eqn:inequality}) implies that
$\chi_{\vec{m}}(\varpi_\gp)$ and $\chi_{\vec{k}+\vec{m}-\vec{1}}(\varpi_\gp)$ take values in $\CO$, and that each term in the expression is independent of the choice of $\varpi_\gp$
by (\ref{eqn:rmt}) and (\ref{eqn:Spip}). 

To prove the formula, we can once again assume $U = U(N)$ for some (sufficiently large) $N$ prime to $p$.  Recall that each cusp $c_t \in C$ has two elements in its preimage in $C_0(\gP)$ under $\pi_1^{\infty}$;  we let $c_t^{\et}$ (resp.~$c_t^{\mu}$) denote the one defined by the triple $(\underline{H},\underline{H}',\iota)$ with $H' = \gp^{-1}J_t \times \CO_F$ (resp.~$J_t \times \gp^{-1}$) and $\iota$ the inclusion. We then proceed as in the proof of Theorem~\ref{thm:Tp} to compute the effect on $q$-expansions of the morphism (\ref{eqn:QtoQ}) for $c = c_t^{\et}$ and $c_t^{\mu}$.  

In the first case, we have $\pi_2^{\infty}(c_t^{\et}) = c_{x^{-1}t}$, and in the notation of the proof of Theorem~\ref{thm:Tp}, we have $I_1 = I_2 = \CO_F$ and 
$$J_1  = J_t \hookrightarrow \gp^{-1}J_t \stackrel{\sim}{\longrightarrow} J_{x^{-1}t} = J_2$$
is multiplication by $\varpi_\gp$, as is
$M_1 = \gd^{-1}J_t \hookrightarrow \gd^{-1}J_{x^{-1}t} = M = M_2$.
It therefore follows from
Proposition~\ref{prop:stq} that the resulting composite in (\ref{eqn:DtoD}) is induced by
\begin{itemize}
\item the isomorphism $\Gamma(\widehat{S}_2,\CO_{\widehat{S}_2})
 \stackrel{\sim}{\longrightarrow} \Gamma(\widehat{S},\CO_{\widehat{S}})$ corresponding to the identity on
 $R[[q^m]]_{N^{-1}M_+\cup\{0\}}$,
\item $\chi_{\vec{m}}(\varpi_\gp): D_{\vec{k}-\vec{1},\vec{m}}^{(2)} = D_{\vec{k}-\vec{1},\vec{m},x^{-1}t} \to D_{\vec{k}-\vec{1},\vec{m},t}= D_{\vec{k}-\vec{1},\vec{m}}^{(1)} $,
\item the identity $\wedge^d(I_2)^{-1} = \wedge^d(I_1)^{-1}$,
\item and the map $\Gamma(\widehat{S},\CO_{\widehat{S}})
 \longrightarrow \Gamma(\widehat{S}_1,\CO_{\widehat{S}_1})$ corresponding to 
$$\begin{array}{ccc}
R[[q^m]]_{m \in N^{-1}M_{+} \cup\{0\} } & \to &
R[[q^m]]_{m \in N^{-1}M_{1,+} \cup\{0\} } \\ && \\
\sum r_mq^m  & \mapsto &
\sum r_{\varpi_\gp m}q^m.\end{array}$$
\end{itemize} 

We now proceed similarly for $c_t^\mu$, except that 
$\pi_2^{\infty}(c_t^\mu)$ is not necessarily a cusp at $\infty$
in the sense of \S\ref{ss:cuspsinf}.  Instead we have  
$\pi_2^{\infty}(c_t^\mu) = \rho^\infty(c_{xt})$, where 
$\rho$ is the automorphism of $Y$ defined in \S\ref{ss:Sp} and
$\rho^{\infty} = \cdot \otimes_{\CO_F} \gp^{-1}$ is the automorphism of $C$ extending it to $Y^{\min}$.  We therefore instead consider the effect on $q$-expansions of the composite of the maps on completions defined by $S_{\varpi_\gp}^{-1}$ and (\ref{eqn:QtoQ}).

In this case we have $J_1 = J_2 = J_t$ and
$$ I_1 = \CO_F \hookrightarrow \gp^{-1} \stackrel{\sim}{\longrightarrow} \varpi_\gp \gp^{-1} = I_2$$
is multiplication by $\varpi_\gp$, as is 
$M_2 = \gd^{-1} \varpi_\gp^{-1}\gp J_t =  \gd^{-1}J_{xt}
 \hookrightarrow \gd^{-1}J_t = M= M_1$.  
The composite in (\ref{eqn:DtoD}) is now induced by
\begin{itemize}
\item the map $\Gamma(\widehat{S}_2,\CO_{\widehat{S}_2})
 \longrightarrow \Gamma(\widehat{S},\CO_{\widehat{S}})$ corresponding to 
$$\begin{array}{ccc}
R[[q^m]]_{m \in N^{-1}M_{2,+} \cup\{0\} } & \to &
R[[q^m]]_{m \in N^{-1}M_{+} \cup\{0\} } \\ && \\
\sum r_mq^m  & \mapsto &
\sum r_{m}q^{\varpi_\gp m}.\end{array}$$
\item $\chi_{\vec{k}+\vec{m}-\vec{1}}(\varpi_\gp): D_{\vec{k}-\vec{1},\vec{m}}^{(2)} \to D_{\vec{k}-\vec{1},\vec{m},t}= D_{\vec{k}-\vec{1},\vec{m}}^{(1)} $,
\item the isomorphism $\wedge^d(I_2)^{-1} \stackrel{\sim}{\to} \wedge^d(I_1)^{-1}$ defined by multiplication by 
$\|x\|^{-1} = \Nm(\gp)^{-1}\Nm(\varpi_\gp)$,
\item and the isomorphism $\Gamma(\widehat{S}_2,\CO_{\widehat{S}_2})
 \stackrel{\sim}{\longrightarrow} \Gamma(\widehat{S},\CO_{\widehat{S}})$ corresponding to the identity on
 $R[[q^m]]_{N^{-1}M_+\cup\{0\}}$.
\end{itemize}

Letting $I_3 = \CO_F$, $J_3 = J_{xt}$, etc. denote the data associated to the cusp $c_{xt}$, it follows from Proposition~\ref{prop:koecher}(3) that the effect of 
$S^{-1}_{\varpi_\gp} = \| x \|^{-1}[U \smat{x}{0}{0}{x} U]$ on $q$-expansions is given by the canonical isomorphism 
$D_{\vec{k},\vec{m}}^{(3)} = D_{\vec{k},\vec{m},xt} \stackrel{\sim}{\to} D_{\vec{k},\vec{m}}^{(2)}$, the map on power series induced by the identification $M_3 = M_2$, and multiplication by the
normalizing factor
of $ \| x \| =  \| x \|^{-1} \| x \|^2$.  

The proposition now follows from the fact that $\bq_{c_t}(T_\gp f)$ is the sum of the images of $\bq_{\pi_2^{\infty}(c^{\et}_t)}(f)$ and $\bq_{\pi_2^{\infty}(c^{\mu}_t)}(f) = \bq_{\pi_2^{\infty}(c^{\mu}_t)}(S_{\varpi_\gp}^{-1}(S_{\varpi_\gp} f))$ under the maps in (\ref{eqn:DtoD}).
\end{proof}

\begin{corollary} \label{cor:Tp} The operators $T_\gp$ commute (for varying $\gp|p$ such that (\ref{eqn:inequality}) holds).
\end{corollary}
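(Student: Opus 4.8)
The plan is to deduce this from the $q$‑expansion Principle together with Proposition~\ref{prop:Tponq}. First I would reduce to a statement about $q$‑expansions. By Theorem~\ref{thm:Tp} the operators $T_\gp$ and $T_\gq$ are $\GL_2(\A_{F,\f}^{(p)})$‑equivariant endomorphisms of $M_{\vec{k},\vec{m},R}$, and $M_{\vec{k},\vec{m},R}$ is the increasing union of the images of the injections $M_{\vec{k},\vec{m}}(U(\gn);R)\hookrightarrow M_{\vec{k},\vec{m},R}$ as $\gn$ (prime to $p$) shrinks, compatibly with the Hecke action. So it suffices to prove $T_\gp T_\gq f = T_\gq T_\gp f$ for $f\in M_{\vec{k},\vec{m}}(U;R)$ with $U = U(\gn)$, $\gn$ prime to $p$ and sufficiently small. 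Taking $S$ to be the set of cusps at $\infty$ of level $U$, which meets every connected component of $Y^{\min}$, the $q$‑expansion Principle in the form valid for this $S$ (Proposition~\ref{prop:qexp}, as noted at the end of \S\ref{ss:cuspsinf}) says $\bq_S$ is injective; hence it is enough to check that $r_m^t(T_\gp T_\gq f) = r_m^t(T_\gq T_\gp f)$ for every $t\in(\A_{F,\f}^{(p)})^\times$ and every $m\in(\gd^{-1}\gn^{-1}J_t)_+\cup\{0\}$.

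Next I would compute both sides by applying Proposition~\ref{prop:Tponq} twice. Fix uniformizers $\varpi_\gp,\varpi_\gq$ (totally positive, as in \S\ref{ss:Sp}), and put $x = \varpi_\gp^{(p)}$, $y = \varpi_\gq^{(p)}\in(\A_{F,\f}^{(p)})^\times$. Applying the proposition to $T_\gp(T_\gq f)$, and then applying it again (with $\gp$ replaced by $\gq$) to each of the two resulting coefficients of $T_\gq f$ and of $S_{\varpi_\gp}(T_\gq f)$, one obtains $r_m^t(T_\gp T_\gq f)$ as a sum of four terms, the one indexed by $(\varepsilon_1,\varepsilon_2)\in\{\pm1\}^2$ being a scalar times $r^{\,x^{-\varepsilon_1}y^{-\varepsilon_2}t}_{\,\varpi_\gp^{\varepsilon_1}\varpi_\gq^{\varepsilon_2}m}\bigl((S_{\varpi_\gp})^{(1-\varepsilon_1)/2}(S_{\varpi_\gq})^{(1-\varepsilon_2)/2}f\bigr)$, where the scalar is the product of one of $\chi_{\vec{m}}(\varpi_\gp),\chi_{\vec{k}+\vec{m}-\vec{1}}(\varpi_\gp)$ with one of $\chi_{\vec{m}}(\varpi_\gq),\chi_{\vec{k}+\vec{m}-\vec{1}}(\varpi_\gq)$, all of which lie in $\CO$ by~(\ref{eqn:inequality}). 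Two points make the iteration legitimate: first, multiplication by $\varpi_\gp$ carries $(\gd^{-1}\gn^{-1}J_t)_+\cup\{0\}$ into $(\gd^{-1}\gn^{-1}J_{x^{-1}t})_+\cup\{0\}$, and likewise on the other branch, so the hypotheses of Proposition~\ref{prop:Tponq} are met at each step; second, $S_{\varpi_\gp}$ commutes with $T_\gq$, which holds because by the description $S_{\varpi_\gp}=\|x\|[U\smat{x}{0}{0}{x}U]$ in \S\ref{ss:Sp} it is a scalar multiple of the action of the central element $\smat{x}{0}{0}{x}$ of $\GL_2(\A_{F,\f}^{(p)})$, and $T_\gq$ is $\GL_2(\A_{F,\f}^{(p)})$‑equivariant.

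Finally I would compare this four‑term expression with the one for $r_m^t(T_\gq T_\gp f)$ obtained by running the same computation with $\gp$ and $\gq$ interchanged. The two match term by term: the scalar coefficients agree because they lie in the commutative ring $\CO$ (or in $K$); the indices $\varpi_\gp^{\varepsilon_1}\varpi_\gq^{\varepsilon_2}m$ agree because $\varpi_\gp,\varpi_\gq\in F^\times$ commute; the base ideles $x^{-\varepsilon_1}y^{-\varepsilon_2}t$ agree because $x,y$ commute in $(\A_{F,\f}^{(p)})^\times$; and the operators $(S_{\varpi_\gp})^{a}(S_{\varpi_\gq})^{b}$ agree because the $S_{\varpi}$ commute with one another (end of \S\ref{ss:Sp}). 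Hence $r_m^t(T_\gp T_\gq f)=r_m^t(T_\gq T_\gp f)$ for all $t$ and $m$, and injectivity of $\bq_S$ gives $T_\gp T_\gq f = T_\gq T_\gp f$, as desired. No step here presents a real obstacle; the only things requiring a moment's care are the commutation $S_{\varpi_\gp}T_\gq = T_\gq S_{\varpi_\gp}$ and the bookkeeping confirming that the iterated formula of Proposition~\ref{prop:Tponq} is symmetric under $\gp\leftrightarrow\gq$.
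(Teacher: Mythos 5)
Your argument is correct and is essentially the paper's own proof: reduce via the $q$-expansion Principle to cusps at $\infty$, iterate Proposition~\ref{prop:Tponq} using that $S_{\varpi_\gp}$ (being $\|x\|[U\smat{x}{0}{0}{x}U]$, a central Hecke operator) commutes with $T_\gq$ by Theorem~\ref{thm:Tp}, and observe the resulting four-term expression is symmetric in $\gp$ and $\gq$. The extra bookkeeping you include (domains of the indices $m$, independence of the choice of uniformizers) is consistent with what the paper leaves implicit.
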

\begin{proof} Suppose that $\gp$ and $\gq$ are primes dividing $p$.  By the $q$-expansion Principle, it suffices to prove that $T_\gp(T_\gq f)$ and $T_\gq(T_\gp f)$ have the same $q$-expansions at all cusps $c_t$ at $\infty$.  Note also that it follows from Theorem~\ref{thm:Tp} and the description of $S_{\varpi_\gp}$ as $\| x\|[Ug^{-1}U]$ that it commutes with $T_\gq$ and $S_{\varpi_\gq}$.  Applying Proposition~\ref{prop:Tponq} for both $\gp$ and $\gq$ therefore gives 
$$\begin{array}{l}
r_m^t(T_\gp(T_\gq f)) =  
\chi_{\vec{m}}(\varpi_\gp\varpi_\gq)r_{\varpi_\gp\varpi_\gq m}^{x^{-1}y^{-1}t}(f) +  \chi_{\vec{m}}(\varpi_\gp)\chi_{\vec{k}+\vec{m}-\vec{1}}(\varpi_\gq)r_{\varpi_\gp\varpi_\gq^{-1} m}^{x^{-1}yt}(S_{\varpi_\gq}f)\\
\quad + \chi_{\vec{k}+\vec{m}-\vec{1}}(\varpi_\gp)\chi_{\vec{m}}(\varpi_\gq)r_{\varpi_\gp^{-1}\varpi_\gq m}^{xy^{-1}t}(S_{\varpi_\gp}f)
+ \chi_{\vec{k}+\vec{m}-\vec{1}}(\varpi_\gp\varpi_\gq)r_{\varpi_\gp^{-1}\varpi_\gq^{-1} m}^{xyt}(S_{\varpi_\gp}S_{\varpi_\gq}f),
\end{array}
$$
where $y = \varpi_\gq^{(p)}$.  Interchanging the roles of $\gp$ and $\gq$ gives the same expression.
\end{proof}

Finally we remark that Proposition~\ref{prop:Tponq} gives another proof that $T_\gp$ coincides with the classical Hecke operator $T_\gp$ if $R$ is a $K$-algebra.

\section{Corrigenda to \cite{theta}} \label{sec:corr}
We list here several minor corrections to Sections~7.1 and~7.2 of \cite{theta}:

\begin{itemize}
\item p.33, $\ell.$-3:\, $B_1(\CO_{F,(p)})_+$ should be $B_1(\CO_{F,(p)})$.
\item p.34, $\ell.$2:\, $B_1(\CO_{F,(p)})gU$ should be $B_1(\CO_{F,(p)})gU^p$.
\item p.34, $\ell.$4:\, The description of the bijection assumes $U \subset \GL_2(\widehat{\CO}_F)$.
\item p.37, $\ell.$-3:\, The reference to [28, Thm.~2.5] should be to Theorem~4.4.10 of \cite{KWL:spl} (according to the reference numbering of this paper).
\item p.38, $\ell.$1:\, Assume throughout that $R$ is Noetherian.
%\item p.41, $\ell.$-12:\, $\chi_{\mathbf{l},R}(\nu)$ should be $\chi_{\mathbf{l},R}(\nu)^{-1}$
\item p.41, $\ell.$-6:\, %$\chi_{\mathbf{l},R}(\nu)$ should be $\chi_{\mathbf{l},R}(\nu)^{-1}$, and 
$\CO_{F,+}$ should be $\CO_{F,+}^\times$.

\end{itemize}

\section*{Acknowledgements} This work originated as an appendix to the forthcoming paper with Shu Sasaki~\cite{DS2}, but took on a life of its own. %was removed due to inflammation!
We are grateful to him, and also to Payman Kassaei, for numerous helpful conversations in relation to the topic.
The author also thanks Mladen Dimitrov for encouraging him to pursue some of the questions considered here, 
Najmuddin Fakhruddin for calling his attention to the notion of rational singularities (and hence the reference~\cite{danilov}),
and Kai-Wen Lan for quickly rectifying a minor oversight in \cite{KWL:spl} that came to light.  Finally we are grateful to the referee for calling attention to several points in an earlier version of this paper requiring clarification or minor correction.

\bibliographystyle{amsplain} 

\bibliography{MRrefs_comp} 

\end{document}